\documentclass[a4paper,11pt]{amsart}
\usepackage[left=3.0cm,right=3.0cm,bottom=3.0cm]{geometry} 
\usepackage{latexsym,amssymb,calc,amsmath,amsthm,amssymb,color,graphicx,graphics}
\usepackage{bbold, epsfig, euscript,enumitem}
\usepackage{tikz}
\usepackage[utf8]{inputenc}
\usepackage[T1]{fontenc}
\usepackage{fancyhdr}

\newcommand{\Lip}{\operatorname{Lip}}
\newcommand{\gr}{\operatorname{gr}}
\newcommand{\Id}{\mathrm{Id}}

\newtheorem{theorem}{Theorem}[section]
\newtheorem{lemma}[theorem]{Lemma}
\newtheorem{proposition}[theorem]{Proposition}
\newtheorem{corollary}[theorem]{Corollary}
\theoremstyle{remark}
\newtheorem{remark}[theorem]{Remark}
\newtheorem{example}[theorem]{Example}
\newtheorem{definition}[theorem]{Definition}
\newtheorem{assum}[theorem]{Assumption}

\renewcommand{\div}{{\rm div \,}}

\newcommand{\esssup}[1]{\mathop{\rm ess\ sup}}
\newcommand{\essinf}[1]{\mathop{\rm ess\ inf}}
\newcommand{\N}{{\rm I\kern - 2.5pt N}}
\newcommand{\Z}{{\rm Z\kern - 5.5pt Z}}
\newcommand{\Q}{{\rm I\kern - 5.25pt Q}}
\newcommand{\C}{{\rm I\kern - 6.25pt C}}
\newcommand{\R}{{\rm I\kern - 2.5pt R}}

\newcommand{\DBb}{\mathbf{b}}

\newcommand{\DBe}{\mathbf{e}}

\newcommand{\DBf}{\mathbf{f}}
\newcommand{\DBn}{\mathbf{n}}
\newcommand{\DBp}{\mathbf{p}}
\newcommand{\DBx}{\mathbf{x}}
\newcommand{\DBy}{\mathbf{y}}
\newcommand{\DBz}{\mathbf{z}}

\newcommand{\DBv}{\mathbf{v}}
\newcommand{\DBw}{\mathbf{w}}
\newcommand{\DBtau}{\boldsymbol\tau}
\newcommand{\DBF}{\mathbf{F}}
\newcommand{\DBN}{\mathbf{N}}
\newcommand{\DBS}{\mathbf{S}}

\newcommand{\DBdiv}{\:\mathrm{div \,}}

\newcommand{\nbf}{\mathbf{n}}

\newcommand{\Sbf}{\mathbf{S}}

\newcommand{\vbf}{\mathbf{v}}

\newcommand{\na}{\nabla}

\newcommand{\iface}{\Sigma}

\begin{document}
\title[Reynolds transport theorem for two-phase flows]{Co-moving volumes and the Reynolds transport theorem for two-phase flows}
\author[Dieter Bothe]{Dieter Bothe \vspace{0.1in}}
\address{Department of Mathematics and Profile Area Thermofluids \&  Interfaces\\
Technical University of Darmstadt\\
Peter-Grünberg-Str.\ 10\\
D-64287 Darmstadt, Germany}
\email{bothe@mma.tu-darmstadt.de}

\author[Matthias K\"ohne]{Matthias K\"ohne \vspace{0.1in}}
\address{Heinrich-Heine-Uni\-ver\-sit\"at D\"usseldorf,
	Mathematisch-Naturwissenschaftliche Fakult\"at,
	Mathematisches Institut,
	Universit\"atsstr.~1, D-40225 D\"usseldorf, Germany}
\email{matthias.koehne@hhu.de}
\date{August 23, 2026}
\begin{abstract}
\noindent
We consider the local kinematics at fluid interfaces in sharp-interface two-phase flows with phase change and interfacial slip. In this setting the governing velocity field is discontinuous at the phase boundary, with possible jumps of both normal and tangential components, and the associated kinematic initial value problems may fail to be uniquely solvable. A physically consistent sharp-interface example exhibits this non-uniqueness and, in addition, rapid loss of boundary regularity: smooth initial control volumes can instantaneously develop edges, while their phasewise parts may form cusps. Motivated by these phenomena, we use concepts from differential inclusions to define co-moving volumes as attainable sets. For such attainable-set co-moving volumes in three-dimensional two-phase flows, emanating from bounded connected $C^2$ initial control volumes whose boundaries meet the interface transversally, we prove a Reynolds transport theorem, valid at a general initial time, first in boundary-integral form and then in divergence form. A key ingredient is a boundary-integral form of the single-phase Reynolds transport theorem for families of compact regular closed sets whose space-time tubes are Lipschitz domains with the natural time-slice/lateral boundary structure and whose lateral boundaries are a.e. transverse to the time slices. 
We also provide a short proof of this single-phase result by applying the divergence theorem in space-time; this proof does not require the motion to be generated by an ambient velocity field.
\end{abstract}
\maketitle
\noindent
{\bf Keywords:} Sharp-interface model, jump conditions, discontinuous kinematic differential equation, interfacial slip, phase change.
\section{Introduction}
\noindent
The Reynolds transport theorem  (\cite{ReynoldsThm}, p.\ 12f) is an essential tool for deriving continuum physical balance equations and for numerical discretization of these in particular in the context of the finite-volume method.
In its classic form it assumes a regular velocity field, allowing for unique solvability of the initial value problems for the kinematic ordinary differential equation (ODE) system, 
\begin{equation}\label{DBeq:kinematic-ODE1}
\dot{\DBx} (t) = \DBv (t, \DBx (t)) \; \mbox{ for } t\in J=(a,b), \quad \DBx (t_0)=\DBx_0,
\end{equation}
which governs the motion of so-called fluid elements (passive tracer particles) within the flow field. 
This induces a smooth flow map
\begin{equation}\label{eq-flow-map-sv}
\Phi_{t_0}^t (\DBx_0) := \DBx (t; t_0, \DBx_0),
\end{equation}
where $\DBx (t; t_0, \DBx_0)$ denotes this unique solution of equation (\ref{DBeq:kinematic-ODE1}) for initial time $t_0$ and initial position $\DBx_0$.
In particular, the flow map allows to introduce the concept of co-moving volumes\vspace{-0.05in}
\begin{equation}\label{eq-comoving-volume-sv}
G(t)= \Phi_{t_0}^t (G_0),
\end{equation}
where $G_0$ is some given initial volume at time $t_0$, usually a smoothly bounded compact set.
If $\DBv$ is the flow field associated to a single-phase flow, the co-moving volumes $G(t)$ are composed of the same fluid elements, hence
the $G(t)$ are also called material (control) volumes in this setting.
This makes the integral formulation of the mass balance simple and elegant, viz.\vspace{-0.1in}
\begin{equation}\label{DBeq:mass-balance-material}
\frac{d}{dt} \int_{G(t)} \rho (t,\DBx) d\DBx = 0
\end{equation}
for all co-moving volumes, where $\rho$ denotes the mass density of the fluid.
To derive the local form of the balance equation, a generalized version of the Leibniz rule for the differentiation of integrated quantities is required.
This is precisely the purpose of the Reynolds transport theorem (RTT for short), which reads as\footnote{Surface and curve integrals are
understood with respect to Hausdorff measure. We write $dS=d\mathcal H^{n-1}$ on spatial hypersurfaces,
$d\mathcal H^n$ on space-time hypersurfaces and $d\ell=d\mathcal H^1$ on curves.}
\begin{equation}\label{DBeq:RTT}
\frac{d}{dt} \int_{G(t)} \psi (t,\DBx) d\DBx =  \int_{G(t)} \partial_t \psi (t,\DBx) d\DBx + \int_{\partial G(t)} \psi (t,\DBx) V_n (t,\DBx) dS,
\end{equation}
where $V_n$ denotes the speed of normal displacement of the family of surfaces $\partial G(t)$.
The latter is commonly defined as $V_n(t,\DBx)=\gamma'(t)\cdot \DBn (t,\DBx)$, with $\DBn$ the outer unit normal of $G(\cdot)$, 
for any $C^1$-curve $\gamma (\cdot)$ such that
$\gamma (t)=\DBx$ and $\gamma(s)\in \partial G(s)$ for all $s$ close to $t$; see below for more details.
As the speed of normal displacement is given by $V_n = \DBv \cdot \DBn$ if the transport stems from a velocity field $\DBv$,
application of the divergence theorem to the surface integral yields the second form
\begin{equation}\label{DBeq:RTT2}
\frac{d}{dt} \int_{G(t)} \psi (t,\DBx) d\DBx =  \int_{G(t)} \big( \partial_t \psi (t,\DBx) + \DBdiv (\psi (t,\DBx) \DBv (t,\DBx)) \big) d\DBx.
\end{equation}
Applying (\ref{DBeq:RTT2}) with $\psi=\rho$ and for $G(t_0)=G_0:={B}_R (\DBx_0)$, conservation of mass (\ref{DBeq:mass-balance-material}) yields
\begin{equation}\label{eq:average}
\frac{1}{|B_R(0)|}\int_{B_R(\DBx_0)} \big( \partial_t \rho  + \DBdiv (\rho   \DBv ) \big) d\DBx =0
\end{equation}
for $t=t_0$ and all balls ${B}_R (\DBx_0)$ inside the flow domain.
Hence $R\to 0+$ yields the continuity equation, i.e.\vspace{-0.05in}
\begin{equation}\label{DBeq:continuity}
\partial_t \rho  + \DBdiv (\rho   \DBv ) = 0
\end{equation}
at all $(t,\DBx)$ where the integrand in (\ref{eq:average}) is continuous.
Note that this derivation only requires the RTT for smoothly bounded sets (such as balls) and only at $t=t_0$, i.e.\
only the short-time evolution of co-moving sets $\Phi_{t_0}^t (G_0)$ matters.

The Reynolds transport theorem can be proven in different ways. 
The classical proof uses the flow map $\Phi_{t_0}^t$ to pull back the moving domain of integration $G(t)$ onto the initial domain $G_0$. From the transformation formula for integration, the Jacobian of the flow map, $J(t,\DBx)=\det (\nabla_x \Phi_{t_0}^t (\DBx))$,  appears as a factor under the integral. This allows for differentiation under the integral. Then, using Jacobi's differential formula for determinants, one obtains
$\partial_t J(t,\DBx) = J(t,\DBx) \DBdiv \DBv (t,\Phi_{t_0}^t (\DBx))$. A further application of the transformation rule, this time in the opposite direction, yields the result. For more details on this common approach, we refer to, e.g., \cite{Arisbook}, \cite{lidstrom2011moving}, \cite{amann2009analysis}, \cite{TheinWarnecke} and the references given there.

The classical version of the proof of the RTT requires $C^1$-regularity of the velocity field. It is natural to ask whether one can relax the regularity assumptions. For instance, the velocity field may only display a certain Sobolev space regularity, as the latter appears in weak solution theories. In the seminal paper \cite{diperna1989ordinary}, DiPerna and Lions were able to generalise the treatment of the motion of fluid elements by moving away from the particle viewpoint, replacing the kinematic ODE (\ref{DBeq:kinematic-ODE1}) by a weak description based on the PDE governing the passive transport of a scalar field, i.e.\ PDEs of the type (\ref{DBeq:continuity}) but for a general scalar $\phi$ instead of the mass density $\rho$.
As a result of this theory and further developments in this direction (e.g., \cite{Ambrosio2004}), it is rather clear that a transport theorem can be formulated for velocity fields from appropriate Sobolev spaces or even with $BV$-regularity. It has to be understood in terms of geometric measure theory, meaning that the transported volumes are only implicitly determined via the solutions of transport PDEs. In general, it is not clear whether this still corresponds to the Lagrangian transport of volumes with sufficiently regular boundaries or, in our setting, whether the interface between the two fluid phases remains a hypersurface of the regularity required for a sharp-interface model.

Possible generalisations of the Reynolds transport theorem to irregular transported domains have been developed by Seguin and Fried~\cite{seguin2014roughening} and in related subsequent work. Their approach is based on Harrison's theory of differential chains \cite{harrison2015operator}
and is not restricted to convective evolutions generated by a classical flow map. In particular, it allows for evolving domains that may develop holes, split into pieces, or otherwise change their geometric regularity, and it yields a transport theorem containing a term associated with the evolution of the boundary. However, convective specializations rely on single-valued smooth kinematics, whereas the present work addresses discontinuous two-phase kinematics.

For an RTT for reachable sets of differential inclusions under different assumptions, see Lorenz~\cite{Lorenz2006RTT}.

More recently, Soga \cite{Soga2026-preprint} studied co-moving volumes and the Reynolds transport theorem in a DiPerna-Lions framework on bounded domains, where forward images under the generalized flow need not be measurable and are therefore replaced by images of suitably trimmed 
measurable initial sets. Related background on transport equations and flows for nonsmooth velocity fields is provided by Lions and Seeger \cite{LionsSeeger2024}, who developed a theory for one-sided Lipschitz velocity fields in a setting that may lie beyond the standard DiPerna-Lions framework. 

The present paper is concerned with the fact that in sharp-interface two-phase flows with phase change and slip the transporting velocity field is discontinuous across the moving interface. As a consequence, the kinematic ODE for fluid particles may fail to be uniquely solvable, so that the classical notion of a single-valued flow map and the associated family of material volumes is no longer available. This necessitates a different approach: we replace the discontinuous ODE by a multivalued kinematic description based on differential inclusions and define co-moving sets by means of the associated attainable sets.

As noted above, in two-phase flows with phase change and interfacial slip, the velocity is discontinuous at the evolving phase boundary.
In the sharp-interface continuum models, the discontinuity is assumed to be a jump discontinuity. Mass transfer can, in general, produce a non-vanishing jump of the normal velocity component when the one-sided mass densities differ; cf.\ \cite{Bo-interface-mass}, \cite{Hutter-book}, \cite{Ishii}, \cite{Slattery-Interfaces} for the sharp-interface continuum modeling.
Furthermore, also a jump of the tangential velocity component is possible. This corresponds to so-called slip or slippage of the two fluid phases, i.e.\ one fluid phase slides over the other one. 
Slip between two liquid phases can appear for instance at the interface between polymeric phases \cite{zartman2011particle}.
Other examples can be found, e.g., in \cite{telari2022intrinsic}, \cite{koplik2006slip}.
The phenomenon of slip between liquid phases is analogous to the appearance of slip at the interface between a fluid and a solid phase.

The results described above do not directly provide the interface-resolved transport identity needed for the present sharp-interface setting with discontinuous two-phase kinematics, interfacial slip, phase change, and co-moving sets defined by attainable sets.
Based on the set-valued approach to rigorously define co-moving sets, the main purpose of the present paper is to prove a Reynolds transport theorem tailored to sharp-interface two-phase flows with phase change and slip, while retaining enough geometric regularity to track the moving interface within the sharp-interface framework.
This setting is not to be confused with the RTT in variational formulations, such as in \cite{slodivcka2025reynolds}, where the transporting velocity field is $C^1$, while a weak formulation allows for discontinuous material parameters.

For the present purpose we need a setting that is somewhat weaker than the classical one since smooth initial control volumes, for instance balls, 
can instantaneously lose their regularity by developing edges. Thus the regularity may instantaneously drop to mere Lipschitz regularity. Even cusps may develop after finite time. This limits the expectation on the strength of the desired RTT. On the other hand, the primary goal of this work is to provide the two-phase RTT as a rigorous modeling tool. For this purpose, it suffices to prove the transport identity at the initial time, as this is enough for the derivation of balance equations. We first prove a two-phase Reynolds transport theorem in boundary-integral form,
corresponding to (\ref{DBeq:RTT}). This formulation only requires the partial time derivative of the transported scalar and the normal speeds of the relevant moving boundary pieces. This is the more basic formulation as the interface motion is not given by the adjacent bulk velocity.
The divergence form, which is closer to the usual Eulerian balance-law notation and to (\ref{DBeq:RTT2}), is then recovered as a corollary under additional phasewise assumptions sufficient to apply the divergence theorem. 

To be able to prove the two-phase RTT in boundary-integral form, we need a
corresponding single-phase RTT under weaker assumptions such as Lipschitz domains.
Furthermore, it needs to cover the case in which parts of the boundary of the evolving sets $G(t)$ move by different speeds, i.e.\ cases in which the speed of normal displacement $V_n$ has jump discontinuities along $\partial G(t)$. In such cases, the $G(t)$ are not given via a flow map with a continuous velocity field. A related boundary-driven RTT for nonconvecting domains with Lipschitz reduced boundaries was proved by Seguin~\cite{Seguin2020nonconvecting}, assuming that the evolving boundary admits a common $C^1$-in-time $W^{1,\infty}$ parametrization. For the precise formulation needed here---a Lipschitz space-time tube whose motion is specified only through its geometric normal speed, without an ambient velocity field or differential inclusion---we therefore include a short proof in section~\ref{BIF-RTT} below.
For self-containment, we also collect some basic facts on Lipschitz domains in Appendix~A.
The latter notation is reserved for open sets. As we will usually consider closed (co-moving) sets, we extend this notion in a simple manner:
we call $G\subset \R^n$ a compact Lipschitz domain if $G=\overline{G^\circ}$ with $G^\circ$, denoting the interior of $G$, being a bounded Lipschitz domain.

Our approach to prove the RTT deviates from the standard proof by first showing the integrated-in-time version of (\ref{DBeq:RTT}), resp.\ of (\ref{DBeq:RTT2}). This way, the Reynolds transport theorem becomes an instance of the divergence theorem, but in space-time. This idea has already been used in an Eulerian setting in \cite{eck2017mathematical} under standard regularity assumptions\footnote{Due to personal communication with Harald Garcke, this idea in \cite{eck2017mathematical} goes back to lectures held by Hans-Wilhelm Alt.}. A precursor (in 1D) can be found in the Remark given in \cite{flanders1973differentiation}. 
\begin{figure}
\includegraphics[width=8cm]{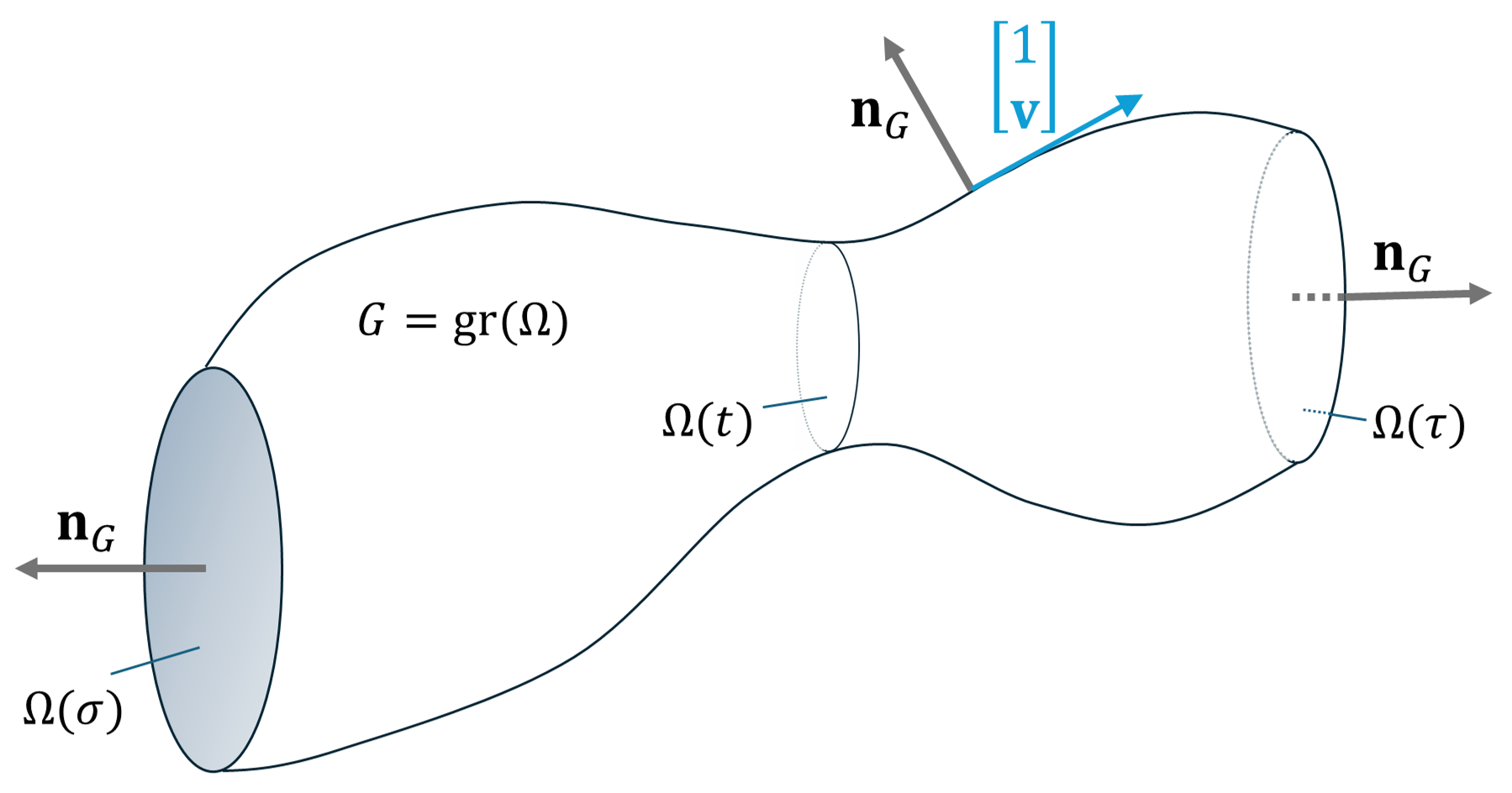}\vspace{-0.1in}
\caption{Space-time graph of the family of co-moving volumes $G(t)$, employed as a domain for the divergence theorem in $\R\times \R^3$.}\label{DBFig_tube}
\end{figure}
To explain the core idea, we give a first result for the integrated-in-time version of (\ref{DBeq:RTT2}) that includes the classical RTT.
\begin{theorem}[Reynolds transport theorem]\label{DBthm:1ph-transport-theorem-comovingCV}
Let $J=(a,b)\subset \R$, $\Omega \subset\R^n$ be open and $\DBv \in C^1 (J\times \Omega ; \R^n)$.
Let $t_0\in J$ and $G_0\subset \Omega$ be a compact Lipschitz domain.
Let $r>0$ be such that the co-moving volumes  $G(t)= \Phi_{t_0}^t (G_0)$ exist for all
$t\in J_0:=(t_0-r,t_0+r)\cap J$. 
Let $\psi \in W^{1,1} (J_0\times \Omega)$, which we identify with its Bochner-Sobolev representative
\[
  \psi\in W^{1,1}\bigl(J_0;L^1(\Omega)\bigr)
  \hookrightarrow C\bigl(J_0;L^1(\Omega)\bigr).\vspace{-0.15in}
\]
Then\vspace{-0.05in}
\begin{equation}\label{eq_int-RTT2}
\int_{G(\tau)} \hspace{-4pt}\psi  (\tau,\DBx) d\DBx
= \int_{G(\sigma)}  \hspace{-4pt}\psi (\sigma,\DBx)  d\DBx +
\int_\sigma^\tau \int_{G(t)}  \hspace{-4pt} (\partial_t \psi + \! \DBdiv (\psi \DBv ) ) (t,\DBx) d\DBx \, dt
\end{equation}
holds for all $[\sigma,\tau]\subset J_0$.

Thus, equation (\ref{DBeq:RTT2}) holds a.e.\ on $J_0$. It holds for all $t\in J_0$ in case $\psi \in C^1(J_0\times \Omega)$.
\end{theorem}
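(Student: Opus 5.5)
The idea is to read (\ref{eq_int-RTT2}) as the divergence theorem in space-time $\R\times\R^n$, applied to the tube
\[
\mathcal G:=\{(t,\DBx): t\in(\sigma,\tau),\ \DBx\in G(t)^\circ\}
\]
and the space-time field $\DBF:=(\psi,\psi\DBv)$. Then $\DBdiv_{(t,\DBx)}\DBF=\partial_t\psi+\DBdiv(\psi\DBv)$. On the bottom slice $\{\sigma\}\times G(\sigma)$ the outer normal is $(-1,0)$, and on the top slice $\{\tau\}\times G(\tau)$ it is $(1,0)$. The lateral boundary $\{(t,\Phi_{t_0}^t(\DBy)):\DBy\in\partial G_0\}$ is swept out by the space-time flow, so its tangent space contains $(1,\DBv)$. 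Hence its normal $N$ satisfies $N\cdot(1,\DBv)=0$, and therefore $\DBF\cdot N=\psi\,(1,\DBv)\cdot N=0$ there. The lateral flux vanishes, and Fubini over the time slices then gives (\ref{eq_int-RTT2}).

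To make this rigorous for a compact Lipschitz domain $G_0$, I would use the $C^1$-diffeomorphism
\[
\Psi(t,\DBy):=(t,\Phi_{t_0}^t(\DBy))
\]
of $[\sigma,\tau]\times U$ onto its image, where $U\supset G_0$ is a small open neighbourhood. Existence of $\Phi_{t_0}^t$ on a neighbourhood of $G_0$ for $t$ in compact subintervals of $J_0$ follows from continuous dependence on initial data, since $G_0$ is compact. Because $\mathcal G=\Psi((\sigma,\tau)\times G_0^\circ)$ and $(\sigma,\tau)\times G_0^\circ$ is a bounded Lipschitz domain (a product of Lipschitz domains), $\mathcal G$ is a bounded Lipschitz domain as well, since $C^1$-diffeomorphisms preserve the Lipschitz property (Appendix A). The divergence theorem for Lipschitz domains with $W^{1,1}$ fields then applies, with the boundary term taken in the sense of traces. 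The boundary of $\mathcal G$ splits $\mathcal H^n$-a.e. into the two slices and the lateral part $\Psi([\sigma,\tau]\times\partial G_0)$. At $\mathcal H^n$-a.e. lateral point the normal exists and annihilates $D\Psi(t,\DBy)(1,0)=(1,\DBv(t,\Phi_{t_0}^t\DBy))$, because $\partial G_0$ has a tangent plane a.e. by Rademacher. This shows the lateral flux is zero.

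The main obstacle is regularity. The field $\DBF=(\psi,\psi\DBv)$ is only $W^{1,1}$, so I must be careful about the traces on the top and bottom slices and identify them with $\psi(\tau,\cdot)$ and $\psi(\sigma,\cdot)$ from the representative in $C(J_0;L^1(\Omega))$. I would first prove the identity for $\psi\in C^1$, where traces are pointwise restrictions and the argument above is classical. Then I would approximate a general $\psi$ by $\psi_k\in C^\infty$ with $\psi_k\to\psi$ in $W^{1,1}(J_1\times\Omega_1)$ on a compact neighbourhood $J_1\times\Omega_1$ of the tube. Using $\psi_k\to\psi$ in $C(J_1;L^1(\Omega_1))$, from the embedding $W^{1,1}(J_1;L^1)\hookrightarrow C(J_1;L^1)$, and the boundedness of $\DBv$ and $\nabla\DBv$ on compact sets, every term of (\ref{eq_int-RTT2}) converges.

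Finally, $t\mapsto\int_{G(t)}(\partial_t\psi+\DBdiv(\psi\DBv))\,d\DBx$ lies in $L^1(J_0)$ by Fubini. Lebesgue's differentiation theorem therefore gives (\ref{DBeq:RTT2}) a.e. on $J_0$. If $\psi\in C^1$, this integrand is continuous in $t$: pull it back by $\Phi_{t_0}^t$ and use continuity of the Jacobian. In that case the derivative exists for every $t\in J_0$.
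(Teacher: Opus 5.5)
Your proposal is correct and follows essentially the paper's route: the $C^1$ space-time map $(t,\DBy)\mapsto(t,\Phi_{t_0}^t(\DBy))$ shows the tube is a bounded Lipschitz domain, the space-time divergence theorem is applied to $\psi(1,\DBv)$, and the lateral flux vanishes because $(1,\DBv)$ is tangent to the lateral boundary. Your one addition is to treat smooth $\psi$ first and then pass to the $W^{1,1}$ limit; this makes explicit the identification of the time-slice traces with the Bochner representative, which the paper simply asserts in its remark following the proof.
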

\begin{proof}
Given \(\sigma<\tau\) such that \([\sigma,\tau]\subset J_0\), set
\[
  Q
  =
  \operatorname{gr}(G|_{[\sigma,\tau]})
  =
  \{(t,\DBx):t\in[\sigma,\tau],\ \DBx\in G(t)\}.
\]
Then $Q=\overline{Q^\circ}$ for 
\[
  Q^\circ
  :=
  \{(t,\DBx)\in(\sigma,\tau)\times\Omega:
  \DBx\in G(t)^\circ\}.
\]
To see that $Q^\circ$ is a bounded Lipschitz domain, note that
\[
  Q^\circ
  =
  T\bigl((\sigma,\tau)\times G_0^\circ\bigr)
\quad \mbox{ for } \quad
  T(t,\DBx):=\bigl(t,\Phi^t_{t_0}(\DBx)\bigr).
\]
The map \(T\) is \(C^1\), because \(\DBv\) and
\(\nabla_\DBx\DBv\) are continuous, and the inverse map
\[
  T^{-1}(t,\DBx)
  =
  \bigl(t,\Phi^{t_0}_t(\DBx)\bigr)
\]
exists and is \(C^1\) as well. Hence \(T\) is a \(C^1\)-diffeomorphism from a neighborhood
of \([\sigma,\tau]\times G_0\) onto a neighborhood of \(Q\). Since
\((\sigma,\tau)\times G_0^\circ \) is a bounded Lipschitz domain by Prop.~\ref{prop:Cartesian-product-Lipschitz-domain},
\(Q^\circ\) is a bounded Lipschitz domain in \(\R^{n+1}\), hence \(Q\) is a
compact Lipschitz domain. 

Let $\DBw (t,\DBx)=(1,\DBv (t,\DBx))$ and note that the joint $(t,\DBx)$-divergence of $\psi \DBw$ in $\R^{n+1}$ is
\[
{\rm div}_{(t,\DBx)} (\psi \DBw) (t,\DBx) = \partial_t \psi (t, \DBx) + \DBdiv (\psi \DBv )(t,\DBx).
\]
Integration over $Q^\circ$ yields
\[
\int_\sigma^\tau \int_{G(t)}  (\partial_t \psi + \DBdiv (\psi \DBv ) )d\DBx \, dt= \int_{Q^\circ} {\rm div}_{(t,\DBx)} (\psi \DBw) d(t,\DBx).
\]
Now $\psi \DBw \in W^{1,1} (Q^\circ;\R^{n+1})$ by the assumptions on $\psi$ and $\DBv$.
Hence ${\rm div}_{(t,\DBx)} (\psi \DBw) \in L^1 (Q^\circ)$ and, by standard Sobolev trace theorem, 
$(\psi \DBw )_{|\partial Q} \cdot \DBn_Q \in L^1(\partial Q)$, with $\DBn_Q$ the outer normal field of $Q^\circ$.
Applying the Sobolev Gauss--Green formula on bounded Lipschitz domains
(see Section~1.5.3 of Grisvard~\cite{Grisvard} and Chapter~3,
Section~1, Theorem~1.1 of Ne\v{c}as~\cite{Necas1967}) yields
\[
= \!\!\int_{\partial Q} \hspace{-4pt}\psi \DBw \!\cdot\! \DBn_Q d\mathcal H^n
= - \int_{G(\sigma)} \hspace{-4pt}\psi \DBw \!\cdot\! \DBe_1 d\DBx + \int_{G(\tau)} \hspace{-4pt}\psi \DBw \!\cdot\! \DBe_1 d\DBx
+\int_{{\rm gr}(\partial G)} \hspace{-4pt}\psi \DBw \!\cdot\! \DBn_Q d\mathcal H^n
\]
with the surface measure $d\mathcal H^n$ on $\partial Q$, being $d\DBx$ on the boundary parts corresponding to the time slices at $t=\sigma, \tau$; cf.\ Figure~\ref{DBFig_tube}. Since $\DBw = (1,\DBv)$ is tangential to ${\rm gr}(\partial G)$, because the latter is flow invariant as a consequence of the unique solvability of (\ref{DBeq:kinematic-ODE1}), the last integrand vanishes due to $\DBw \cdot \DBn_Q =0$ $\mathcal H^n$-a.e.\ on ${\rm gr}(\partial G)$. 
We thus obtain
\[
\int_{G(\tau)} \hspace{-4pt}\psi  (\tau,\DBx) d\DBx
= \int_{G(\sigma)}  \hspace{-4pt}\psi (\sigma,\DBx)  d\DBx +
\int_\sigma^\tau \int_{G(t)}  \hspace{-4pt} (\partial_t \psi + \! \DBdiv (\psi \DBv ) ) (t,\DBx) d\DBx \, dt
\]
for all $[\sigma,\tau]\subset J_0$. 
Thus the map \(t\mapsto\int_{G(t)}\psi(t,\DBx)\,d\DBx\) is absolutely continuous on \(J_0\), 
hence the differential identity holds for a.a.\ \(t\in J_0\). If \(\psi\in C^1(J_0\times\Omega)\), the
right-hand side is continuous in \(t\), and the identity holds for every
\(t\in J_0\).
\end{proof}
A few comments are in order.\\
\indent
(i) Since $G_0\Subset\Omega$, no extension of the flow to $\partial\Omega$ is needed. On a compact neighborhood of $G_0$ contained in $\Omega$, the $C^1$-regularity of $\DBv$ gives uniform bounds and a uniform spatial Lipschitz constant. After decreasing the time interval if necessary, all solutions starting in $G_0$ remain in this neighborhood; hence a common non-trivial interval of existence exists and the choice of $r>0$ in the assumptions is possible.\\
\indent
(ii) The identification of \(\psi\) with its Bochner--Sobolev representative,
which is continuous from \(J_0\) to \(L^1(\Omega)\), ensures that
\[
  \int_{G(t)}\psi(t,\DBx)\,d\DBx
\]
is well defined for every \(t\in J_0\). On the endpoint time slices appearing
in the space-time divergence theorem, the Sobolev trace agrees with these
Bochner slice values in the usual sense.\\
\indent
(iii) In Theorem~\ref{DBthm:1ph-transport-theorem-comovingCV}, we used the strong assumption $\DBv\in C^1 (J\times \Omega)$ to infer directly that 
$Q$ is a compact Lipschitz domain, equivalently that $Q^\circ$ is a bounded Lipschitz domain. We will see in section~\ref{BIF-RTT} that for the divergence form of the RTT a local-in-time version follows for $\DBv\in L^\infty (J\times \Omega)$, being Lipschitz continuous in $\DBx$. This works in particular because the flow map is then a small Lipschitz perturbation of the identity for short times.\\
\indent
(iv) In section~\ref{BIF-RTT}, we will obtain the RTT under weaker assumptions as a corollary to Theorem~\ref{thm:single-phase-boundary-rtt},
an appropriate boundary-integral form of the RTT. 
As explained above, the latter will be needed in the proof of the two-phase RTT.\\[1ex]
The remainder of the paper is organised as follows.
The next section~\ref{sec:SIframework} introduces the framework for sharp-interface modeling, in particular the standard
regularity assumptions for a family of moving hypersurfaces and its intrinsic speed of normal displacement.
These notions also (partly) apply to the evolving family $\partial G(\cdot)$. This is used in section~\ref{BIF-RTT} to prove the 
boundary-integral form of the Reynolds theorem, viz.\ equation (\ref{DBeq:RTT}), without assuming an underlying flow map.
The proof uses the approach via the space-time divergence theorem.
Section~\ref{sec:kinematics} provides information about possible complications in the kinematics close to a fluid interface in case phase change
and slippage at the interface occur simultaneously. We give a rather simple example for which non-unique solvability occurs. 
This motivates the introduction of a generalised concept of co-moving sets in section~\ref{sec:flow-map}, based on a set-valued regularisation of
the discontinuous velocity field. This finally allows to formulate and prove the two-phase version of the Reynolds transport theorem in section~\ref{sec:two-phase-RTT}.
In the example mentioned above, an initially smooth set $G_0$ (a disk in $\R^2$) instantaneously loses boundary regularity, remaining a Lipschitz domain for short times before it develops a cusp after finite time. Even more, the part of $G(t)$ inside one phase instantaneously develops a cusp.
This indicates the complexity of the two-phase kinematics in this general case and requires some new ideas to prove the two-phase RTT.
\section{Sharp-Interface Modeling Framework}\label{sec:SIframework}
We consider two fluids, occupying sub-domains $\Omega^\pm(t)$ of a domain $\Omega \subset \R^n$, the so-called bulk phases.
We consider the two fluids to be immiscible on the molecular scale, i.e.\ they are separated by a sharp interface $\Sigma (t)$.
In order to have a well-defined local curvature, we assume $\Sigma(t)$ for any $t\in J=(a,b)$ to be a
$C^2$-surface, where $J=(a,b)\subset \R$. In particular, $\Omega^+(t) \cup \Omega^-(t) \cup \Sigma(t)$ is a disjoint decomposition of $\Omega$.
To avoid technical problems with moving contact lines, we assume that $\Sigma(t)$ is an embedded surface in $\Omega \subset \R^n$ without boundary. 
We employ the following definition of a $\mathcal{C}^{1,2}$-family of moving hypersurfaces which,
in a similar form, can also be found in \cite{Kimura.2008}, \cite{PrSi15}, \cite{barrett2020parametric} and \cite{Giga.2006}.
\begin{definition}\label{def:C12-family}
Let $J=(a,b)\subset \R$ be an open interval. A family $\{\Sigma(t)\}_{t \in J}$ with $\Sigma(t) \subset \R^n$ is called a
$\mathcal{C}^{1,2}$-\emph{family of moving hypersurfaces} if
\begin{enumerate}
\item[(i)]
the graph of $\Sigma$, defined as\vspace{-0.05in}
\begin{equation}
\mathcal{M} := {\rm gr}(\Sigma):=\{(t,\DBx): t \in J, \DBx \in \Sigma(t) \} = \bigcup_{t \in J} \Big( \{t\} \times \Sigma(t) \Big)\vspace{-0.05in}
\end{equation}
is a $\mathcal{C}^1$-hypersurface in $\R \times \R^n$ with (continuous) unit normal field $\DBn_\mathcal{M}$ such that
$\DBn_\mathcal{M}^\DBx (t,\DBx) \neq 0$ for all $(t,\DBx)\in \mathcal{M}$, where $\DBn_\mathcal{M} = (\DBn_\mathcal{M}^t, \DBn_\mathcal{M}^\DBx)$
is the decomposition of $\DBn_\mathcal{M}$ into its time- and space-components;
\item[(ii)]
each section $\Sigma(t)=\mathcal{M}\cap [\{t\}\times \R^n]$ 
is a $\mathcal{C}^2$-hypersurface in $\R^n$, with unit normal field $\DBn_\iface=\DBn_\iface(t,\cdot)$ oriented such that
$\DBn_\iface = \DBn_\mathcal{M}^\DBx / ||\DBn_\mathcal{M}^\DBx||$;
\item[(iii)]
the normal field $\DBn_\iface$ is continuously differentiable on $\mathcal{M}$, i.e.\
$\DBn_\Sigma \in \mathcal{C}^1(\mathcal{M})$.
\end{enumerate}
\vspace{0.05in}
\end{definition}
\noindent
The property of $\mathcal{M}=\gr (\iface)$ to be an oriented $C^1$-hypersurface with $\DBn_\mathcal{M}^\DBx\neq 0$
implicitly contains a kinematic property of $\{\iface(t)\}_{t \in J}$, viz.\ that the time-dependence is associated with a local motion of points in $\iface$ in normal direction in space with a certain speed of normal displacement.
\begin{definition}[Speed of normal displacement]\label{def:speed-of-normal-displacement}
Let $J=(a,b)\subset \R$ be an open interval and $\{\iface(t)\}_{t \in J}$ be a family of moving closed hypersurfaces in $\R^n$ such that $\mathcal{M}=\gr (\iface)$ is
an oriented $C^1$-hypersurface with $\DBn_\mathcal{M}^\DBx \neq 0$ on $\mathcal{M}$.
Then the \emph{speed of normal displacement} $V_\iface (t,\DBx)$ of $\iface(\cdot)$ is defined as\vspace{-0.1in}
\begin{equation}\label{DBdef:speed-of-normal-displacement}
V_\iface (t,\DBx) = -\, \frac{\DBn_\mathcal{M}^t (t,\DBx)}{||\DBn_\mathcal{M}^\DBx (t,\DBx)||}\;
\mbox{ for } (t,\DBx) \in \mathcal{M}.
\end{equation}
From the regularity of the normal field of $\mathcal{M}$, namely $\DBn_\mathcal{M}\in C(\mathcal{M})$,
it is clear that $V_\iface$ is jointly continuous on $\mathcal{M}$.
\end{definition}
\indent
Associated with $V_\iface$ is the \emph{intrinsic velocity field}
\begin{equation}\label{DBeq:normal-velocity-field}
\DBw_\Sigma (t,\DBx) = V_\iface (t, \DBx) \, \DBn_\iface (t,\DBx) \; \mbox{ for } t\in J,\, \DBx\in \iface (t),
\end{equation}
which also is jointly continuous under the assumption that $\mathcal{M}=\gr (\iface)$ is an oriented
$C^1$-hypersurface of $\R^{n+1}$. Then the extended field $(1, \DBw_\Sigma (t,\DBx))$ is tangential to $\mathcal{M}$, since
\[
\langle (1,\DBw_\Sigma (t,\DBx)), \DBn_\mathcal{M} \rangle = 
\DBn_\mathcal{M}^t + \langle  V_\iface (t, \DBx) \, \frac{\DBn_\mathcal{M}^\DBx }{ ||\DBn_\mathcal{M}^\DBx||}, \DBn_\mathcal{M}^\DBx \rangle =0.
\]
The joint continuity of $\DBw_\Sigma (t,\DBx)$ together with the
property that $(1, \DBw_\Sigma (t,\DBx))$ is tangential to $\mathcal{M}$ is sufficient to (locally) solve the
initial value problems
\begin{equation}\label{DBeq:IVP-surface-normal-path}
\dot \DBx^\iface (t) = \DBw_\iface (t, \DBx^\iface (t)) \; \mbox{ for } t\in J,\quad \DBx^\iface (t_0)=\DBx^\iface_0,
\end{equation}
for every $t_0\in J$ and $\DBx^\iface_0\in \iface (t_0)$.
This follows, e.g., from Theorem~1 in \cite{MDE}; cf.\ also \cite{Bo2}.
For the uniqueness and smooth dependence needed below, we use the regularity consequence of
\cite[Lemma~3 and Corollary~1]{Bo-2PH-ODE}: for a $\mathcal C^{1,2}$-family of moving
hypersurfaces, the signed-distance function $d_\iface$ and the nearest-point projection $\pi_\iface$ are $C^1$ in a
tubular neighborhood of $\mathcal M$, $\nabla_\DBx d_\iface$ is $C^1$, and\vspace{-0.05in}
\[
        \nabla_\DBx d_\iface(t,\DBx)=\DBn_\iface(t,\pi_\iface(t,\DBx)).
\]
Moreover, $V_\iface\in C(\mathcal M)$ and $\nabla_\iface V_\iface\in C(\mathcal M)$. Hence
\[
        \DBw_\iface=V_\iface\DBn_\iface\in C(\mathcal M),\qquad
        \nabla_\iface \DBw_\iface\in C(\mathcal M),
\]
so that $\DBw_\iface(t,\cdot)$ is locally Lipschitz on each $\iface(t)$. Consequently the
surface ODE \eqref{DBeq:IVP-surface-normal-path} is locally uniquely solvable. In particular,
for every compact subset of $\iface(t_0)$, after shrinking the time interval if necessary,
it generates a $C^1$ surface flow on a neighborhood of that compact set into $\iface(t)$.
Observe also that solvability of the initial value problems (\ref{DBeq:IVP-surface-normal-path}) implicitly contains the  constraint
$\DBx^\iface (t)\in \iface(t)$ for $t\in J$.
As $\DBx^\iface (t+h)=\DBx^\iface (t)+h \DBw_\iface (t, \DBx^\iface (t)) + o(h)$ as $h\to 0$, this implies the relation
\begin{equation}\label{VSigma}
\lim_{h\to 0+} \frac 1 h {\rm dist} (\DBx+h V_\Sigma (t,\DBx) \DBn_\Sigma (t,\DBx) , \Sigma (t+h)) =0 \quad \mbox{ for } t\in J, \DBx\in \Sigma(t),
\end{equation}
explaining why $V_\Sigma$ is called \emph{speed of normal displacement} of $\Sigma(\cdot)$.

The computation of  $V_\Sigma$ is especially simple
if $\{\Sigma(t)\}_{t \in J}$ has a level set description, i.e.\
\begin{equation}\label{levelset}
\Sigma(t)=\{\DBx \in \R^n: \phi(t,\DBx)=0\}
\end{equation}
with $\phi \in \mathcal{C}^{1,2}(\mathcal N)$ for some open neighborhood $\mathcal N \subset \R \times \R^n$ of $\mathcal M$
such that $\nabla_{\DBx} \phi \not= 0$ on $\mathcal{M}$, oriented so that
$\nabla_{\DBx}\phi/\|\nabla_{\DBx}\phi\|=\DBn_\Sigma$ on $\mathcal M$. Then\vspace{-0.1in}
\begin{equation}\label{E11}
V_\Sigma(t,\DBx)=- \, \frac{\partial_t \phi(t,\DBx)}{\|\nabla_{\DBx} \phi(t,\DBx)\|}\quad\text{ for } t \in J, \, \DBx \in \Sigma(t),
\end{equation}
which precisely resembles (\ref{DBdef:speed-of-normal-displacement}).

Let us also note that Definition~\ref{def:speed-of-normal-displacement} is equivalent to the more common one which employs curves.
Indeed,\vspace{-0.05in}
\[
V_\Sigma (t,\DBx) = \langle \gamma' (t) , \DBn_\Sigma (t, \gamma (t)) \rangle
\]
for any $\mathcal{C}^1$-curve $\gamma$ with $\gamma (t)=\DBx$ and ${\rm gr}(\gamma)\subset \mathcal M$,
and the value does not depend on the choice of a particular curve; cf.\ Chapter~2.5 in \cite{PrSi15}.

Now, in the two-phase flow setting mentioned above with the evolving interface $\{\Sigma(t)\}_{t \in J}$ being a family of moving hypersurfaces $\Sigma(t) \subset \R^n$ and a given (control volume) $V\subset \R^n$ bounded with $\partial V\in C^1$,
let $V^\pm (t):=\Omega^\pm (t) \cap V$. Given also a sufficiently regular $\psi = \psi(t,\DBx)$, application of the
Reynolds transport theorem to both $V^\pm (t)$ yields
\begin{equation}\label{DBeq:RTT_fixedCV}
\frac{d}{dt} \int_{V} \psi \,d\DBx =  \int_{V\setminus \Sigma(t)} \partial_t \psi  \,d\DBx 
- \int_{\Sigma_V(t)} [\![ \psi  ]\!] V_\Sigma  \,dS,
\end{equation}
where $\Sigma_V(t):= \Sigma(t)\cap V$. The orientation of
$\DBn_\Sigma$ is arbitrary but fixed, and the jump bracket
$[\![\, \cdot \,]\!]$ is understood with respect to this orientation. It is
defined as
\begin{equation}\label{jump-bracket}
[\![ \psi]\!] (t,\DBx):=\lim\limits_{h \to 0+}
\big(\psi (t,\DBx+h\DBn_\Sigma (t,\DBx) )
- \psi (t,\DBx-h\DBn_\Sigma(t,\DBx) )\big)
\end{equation}
for $t\in J$, $\DBx \in \Sigma(t)$.

For a fixed control volume $V\subset \R^3$, the integral balance of mass reads
\begin{equation}\label{DBeq:mass-balance-fixed}
\frac{d}{dt} \int_{V} \rho \,d\DBx = - \int_{\partial V} \rho \DBv \cdot \DBn \,dS,
\end{equation}
as the mass flux density is given by $\rho \DBv$ due to (\ref{DBeq:continuity}).
This also holds true for the two-phase flow setting under the assumption that mass cannot be stored on the interface, i.e.\
$\rho^\Sigma \equiv 0$ for the area-specific mass density on the interface\footnote{We use a lower index $\Sigma$ for quantities in which $\Sigma$ enters as a purely geometric or kinematic object, while an upper index $\Sigma$ is used for physical interface quantities.}.
Applying (\ref{DBeq:RTT_fixedCV}), the fundamental lemma of the calculus of variations yields the local equations
\begin{align}
	&\partial_t \rho+ \div(\rho \DBv)=0 \quad \text{ in } \Omega\setminus \Sigma,\label{E1}\\
	&[\![\rho(\DBv-\DBv^\Sigma)\cdot \DBn_\Sigma ]\!] =0 \quad \text{ on } \Sigma.\label{E3}
\end{align}
Here and throughout we use a condensed notation as equations such as (\ref{E1}) are given for both phases in a single equation without use of
phase indices. If the dependence on the phase index is relevant, we use upper index $\pm$, e.g.\ $\rho^\pm$ for the mass densities.

Note that solely the speed of normal displacement enters in (\ref{E3}), as  $\DBv^\Sigma \cdot \DBn_\Sigma=V_\Sigma$.
Letting $\dot{m}^\pm = \rho^\pm (\DBv^\pm -\DBv^\Sigma)\cdot \DBn_\Sigma$ denote the one-sided mass transfer rates,
the jump condition (\ref{E3}) becomes $[\![ \dot m ]\!] =0$. Since $\dot m$ is only defined on $\Sigma$, anyhow,
this is essentially a shorthand notation encoding the equality $\dot{m}^+=\dot{m}^-$. 

The balance of momentum rests upon Newton's second law of motion, stating that within an inertial frame of reference
the rate at which the momentum of a body is changing with time equals the net force on the body.
Here, the term 'body' refers to a certain piece of matter that is made up by the same material during the motion.
In other words, in the context of continuum physics, it refers to a material (control) volume.
Consequently, at least in the first place, the application of Newton's 2$^{\rm nd}$ law of motion requires a co-moving volume.
In a setting in which such a material co-moving volume $G(t)$ is well defined for $t\in J$, the integral form of the balance of momentum reads as
\begin{equation}\label{DBeq:momentum-balance-material}
\frac{d}{dt} \int_{G(t)} \rho \DBv d\DBx = \int_{\partial G(t)} \DBS \cdot \DBn \,dS 
+ \int_{G(t)} \rho \DBb \, d\DBx + \int_{\partial \Sigma_G(t)} \DBS^\iface \cdot \DBN \,d\ell,
\end{equation}
where $\DBS$ and $\DBS^\iface$ are the bulk and interface stress tensors, respectively, $\DBb$ denotes the density of body forces, and $\DBN$ is the outward pointing co-normal at the bounding curve of $\Sigma_G(t)$.
Even if a rigorous definition of the sets $G(t)$ is only to be given below, let us already note the resulting local form of the momentum balance. This reads as
\begin{align}
	&\partial_t(\rho \DBv)+ \div(\rho \DBv \otimes \DBv-\DBS)=\rho \DBb \quad \text{ in } \Omega\setminus\Sigma,\label{E2}\\
	&[\![\rho \DBv \otimes (\DBv-\DBv^\Sigma)-\DBS ]\!] \cdot
	\DBn_\Sigma  ={\rm div}_\Sigma \, \DBS^\Sigma \quad \text{ on } \Sigma,\label{E4}
\end{align}
where ${\rm div}_\Sigma$ denotes the surface divergence. 

Later, in hindsight, the momentum balance can also be formulated for a fixed control volume once the convective transport of momentum
across the boundary of the control volume is understood. Nevertheless, application of Newton's laws of motion motivates
a rigorous definition of co-moving volumes also in a two-phase flow setting.

The system (\ref{E1}), (\ref{E3}), (\ref{E2}), (\ref{E4}) requires several constitutive relations to arrive at a closed model, i.e.\ a system of PDEs for the unknown variables $\rho, \DBv$; see \cite{Slattery-Interfaces}, \cite{bothe2025multi} for more details.
Below, we shall employ the simplest closure for the interfacial stress tensor $\DBS^\Sigma$, given by $\sigma {\bf P}_\iface$ with the interfacial tension $\sigma$ and ${\bf P}_\iface={\bf I} - \DBn_\Sigma \otimes \DBn_\Sigma$. Then (\ref{E4}) becomes
\begin{equation}\label{eq-mom-jump}
\dot m \, [\![ \DBv ]\!]  - [\![ \DBS \cdot \DBn_\Sigma ]\!] 
=  \sigma \kappa_\iface \DBn_\Sigma + \nabla_\Sigma \sigma \quad \text{ on } \Sigma,
\end{equation}
where $\kappa_\iface = {\rm div}_\Sigma ( - \DBn_\Sigma)$ denotes the total curvature of $\iface$, i.e.\ the sum of the principal curvatures.
Here, we are mainly interested in the flow generated by the two-phase velocity field. 
For this purpose we need to add a jump condition for the tangential part, where the latter is written as $\DBv_{||}$.
While the standard no-slip condition requires continuity of $\DBv_{||}$, i.e.
\begin{equation}\label{E5}
[\![ \DBv_{||} ]\!]=0 \quad \text{ on } \Sigma,
\end{equation}
we will focus on the general case of possible slip of the fluid phases at $\iface$.
%
%
%
%
\section{Boundary-integral form of the single-phase RTT}\label{BIF-RTT}
We prove a boundary-integral version of the Reynolds transport theorem
which will be used below for domains whose motion is known only through
the normal velocity of the boundary. This form is independent of whether
the motion is generated by a velocity field in the full surrounding space.

Let \(J=(a,b)\subset\R\), and let \(G(t)\subset\R^n\),
\(t\in J\), be a family of compact regular closed sets. For
\(\sigma,\tau\in J\), \(\sigma<\tau\), set
\[
\operatorname{gr}(G|_{[\sigma,\tau]})
  =
  \{(t,\DBx):t\in[\sigma,\tau],\ \DBx\in G(t)\},\vspace{-0.05in}
\]
and define the associated space-time tube interior and its closure by\vspace{-0.05in}
\[
  Q_{\sigma,\tau}^\circ
  :=
  \{(t,\DBx)\in(\sigma,\tau)\times\R^n:
  \DBx\in G(t)^\circ\},
  \qquad
  Q_{\sigma,\tau}:=\overline{Q_{\sigma,\tau}^\circ}.\vspace{-0.05in}
\]
We call\vspace{-0.05in}
\[
M_{\sigma,\tau}
 :=
 \{(t,\DBx)\in(\sigma,\tau)\times\R^n:\ \DBx\in\partial G(t)\}
\]
the lateral boundary of this space-time tube\footnote{Throughout, we call the graph of a family of moving sets a space-time tube. This is not to be confused with a tubular neighborhood.}.
We assume that \(Q_{\sigma,\tau}^\circ\) is a bounded Lipschitz domain. Its (measure-theoretic)
outer unit normal, understood as the outer normal to \(Q_{\sigma,\tau}^\circ\),
is denoted by \(\DBn_{Q}\). Whenever \(M_{\sigma,\tau}\) represents the lateral part of \(\partial Q_{\sigma,\tau}\), as assumed in the theorem below, we write
\[
\DBn_M:=\DBn_{Q|_{M_{\sigma,\tau}}} =
 \bigl(\DBn^t_{M},\DBn^\DBx_{M}\bigr)
 \in \R\times\R^n .
\]
Thus the orientation of the graph \(M_{\sigma,\tau}\) is induced by the exterior direction of the swept-out
space-time region \(Q_{\sigma,\tau}\).

On the regular part of \(M_{\sigma,\tau}\), and hence
\(\mathcal H^n\)-almost everywhere on \(M_{\sigma,\tau}\), we assume
that
\[
\DBn^\DBx_{M}\neq 0 .
\]
In analogy with Definition~2.2, we define the spatial outer unit normal
to \(G(t)\) and the speed of normal displacement of \(\partial G(\cdot)\)
by
\[
\DBn_G(t,\DBx)
 :=
 \frac{\DBn^\DBx_{M}(t,\DBx)}
      {\|\DBn^\DBx_{M}(t,\DBx)\|},
 \qquad
V_n(t,\DBx)
 :=
 -
 \frac{\DBn^t_{M}(t,\DBx)}
      {\|\DBn^\DBx_{M}(t,\DBx)\|}.
\]
Thus, on a smooth moving boundary patch,
\[
\DBn_{M}
 =
 \frac{(-V_n,\DBn_G)}
      {\sqrt{1+V_n^2}}.
\]
In particular, \(V_n\) is positive if \(\partial G(t)\) moves in the
direction of the spatial outer normal \(\DBn_G\).

\begin{theorem}[Boundary form of the single-phase RTT]\label{thm:single-phase-boundary-rtt}
Let \(G(t)\subset\R^n\) be compact, regular closed\,\footnote{A set $K\subset \R^n$ is called regular closed if $K=\overline{K^\circ}$.} sets for $t\in J:=(a,b)\subset\R$. Given \(\sigma,\tau\in J\) with \(\sigma<\tau\), assume that
\[
  Q_{\sigma,\tau}^\circ =
  \{(t,\DBx)\in(\sigma,\tau)\times\R^n:
  \DBx\in G(t)^\circ\}
\]
is a bounded Lipschitz domain, whose boundary allows for the decomposition
\[
  \partial Q_{\sigma,\tau}
  =
  (\{\sigma\}\times G(\sigma))
  \cup
  (\{\tau\}\times G(\tau))
  \cup
  M_{\sigma,\tau}
\]
up to \(\mathcal H^n\)-null sets.
Moreover, assume that
\(\DBn_M^\DBx\neq0\) \(\mathcal H^n\)-almost everywhere on \(M_{\sigma,\tau}\).

Let \(\phi\in C(Q_{\sigma,\tau})\), and assume that \(\phi\) has a
weak time derivative \(\phi_t\in L^1(Q_{\sigma,\tau}^\circ)\), i.e.
\[
  \int_{Q_{\sigma,\tau}^\circ}\phi\,\partial_t\zeta\,d(t,\DBx)
  =
  -\int_{Q_{\sigma,\tau}^\circ}\phi_t\,\zeta\,d(t,\DBx)
  \qquad
  \text{for all }\zeta\in C_c^\infty(Q_{\sigma,\tau}^\circ).\vspace{-0.05in}
\]
Then\vspace{-0.05in}
\begin{equation}\label{eq:int-RTT-bf}
\begin{aligned}
  \int_{G(\tau)}\phi(\tau,\DBx)\,d\DBx
  &=
  \int_{G(\sigma)}\phi(\sigma,\DBx)\,d\DBx
  +
  \int_\sigma^\tau\int_{G(t)}\phi_t(t,\DBx)\,d\DBx\,dt  \\
  &\quad+
  \int_\sigma^\tau\int_{\partial G(t)}
  \phi(t,\DBx)V_n(t,\DBx)\,dS\,dt .
\end{aligned}
\end{equation}
The lateral integral is understood through the signed-measure identity
\[
  \int_\sigma^\tau\int_{\partial G(t)}
  \phi V_n\,dS\,dt
  :=
  -\int_{M_{\sigma,\tau}}\phi\,\DBn_M^t\,d\mathcal H^n .
\]
In particular, no additional boundedness assumption on \(V_n\) is required.

If the assumptions hold on every compact subinterval of \(J\), then
\(t\mapsto\int_{G(t)}\phi(t,\DBx)\,d\DBx\) is absolutely continuous on compact
subintervals of \(J\), and
\[
  \frac{d}{dt}\int_{G(t)}\phi(t,\DBx)\,d\DBx
  =
  \int_{G(t)}\phi_t(t,\DBx)\,d\DBx
  +
  \int_{\partial G(t)}\phi(t,\DBx)V_n(t,\DBx)\,dS \quad \mbox{for a.a. } t\in J.
\]
If the right-hand side admits a continuous representative,
this identity holds for every \(t\in J\).
\end{theorem}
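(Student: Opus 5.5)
The plan is to read \eqref{eq:int-RTT-bf} as the Gauss--Green formula in \(\R^{n+1}\) for the field \(\phi\,\DBe_1\), whose space-time divergence is just \(\phi_t\). Since \(\phi\) is only continuous with an \(L^1\) weak time derivative, the Sobolev Gauss--Green formula used in the proof of Theorem~\ref{DBthm:1ph-transport-theorem-comovingCV} does not apply directly. I would therefore first prove
\begin{equation*}
  \int_{Q^\circ_{\sigma,\tau}}\phi_t\,d(t,\DBx)=\int_{\partial Q_{\sigma,\tau}}\phi\,\DBn_Q^t\,d\mathcal H^n \qquad (\ast)
\end{equation*}
by an interior cut-off argument, and then split \(\partial Q_{\sigma,\tau}\) according to the assumed decomposition.

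\textbf{Step 1: proof of \((\ast)\).} Let \(d(z):=\operatorname{dist}(z,\partial Q_{\sigma,\tau})\) for \(z\in Q^\circ_{\sigma,\tau}\), and set \(\eta_\varepsilon:=\min\{1,d/\varepsilon\}\).
\begin{itemize}
\item Each \(\eta_\varepsilon\) is Lipschitz with compact support in \(Q^\circ_{\sigma,\tau}\). Approximating it by mollification and using the weak-derivative identity gives \(\int\phi_t\eta_\varepsilon=-\int\phi\,\partial_t\eta_\varepsilon\).
\item The left side tends to \(\int_{Q^\circ}\phi_t\) by dominated convergence.
\item On the right, the \(\R^{n+1}\)-valued measures \(\mu_\varepsilon:=\nabla\eta_\varepsilon\,d(t,\DBx)=\varepsilon^{-1}\nabla d\,\mathbb 1_{\{d<\varepsilon\}}\,d(t,\DBx)\) converge distributionally to \(D\mathbb 1_{Q}=-\DBn_Q\,\mathcal H^n\llcorner\partial Q_{\sigma,\tau}\), because \(\eta_\varepsilon\to\mathbb 1_{Q^\circ}\) in \(L^1\).
\item Since \(|\nabla d|=1\) a.e., the total variations satisfy \(|\mu_\varepsilon|(\R^{n+1})=\varepsilon^{-1}|\{0<d<\varepsilon\}|\). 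For a bounded Lipschitz domain this inner Minkowski quantity converges to \(\mathcal H^n(\partial Q_{\sigma,\tau})\). I would verify this with finitely many local Lipschitz graph charts (Appendix~A) and the area formula.
\item So the convergence is strict, and Reshetnyak's continuity theorem (or a direct chart-wise argument) lets us pass to the limit against the continuous function \(\phi\in C(Q_{\sigma,\tau})\). Only its values on a neighbourhood of \(\partial Q\) inside \(Q\) matter, so no extension is needed. This yields \((\ast)\).
\end{itemize}
I expect this approximation step, in particular the Minkowski-content convergence for Lipschitz boundaries, to be the main technical obstacle. The fallback is to do everything locally in graph coordinates via a partition of unity, where \(d\) can be replaced by the vertical distance to the graph.

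\textbf{Step 2: splitting the boundary and identifying the lateral term.}
\begin{itemize}
\item On \(\{\tau\}\times G(\tau)\), the set \(\partial Q\) lies in the hyperplane \(\{t=\tau\}\) while \(Q\) lies in \(\{t\le\tau\}\). Hence the measure-theoretic outer normal is \(\DBe_1\) \(\mathcal H^n\)-a.e. there, and similarly it is \(-\DBe_1\) on \(\{\sigma\}\times G(\sigma)\). On these slices \(\mathcal H^n\) is Lebesgue measure \(d\DBx\).
\item The remaining part of \(\partial Q\) is \(M_{\sigma,\tau}\), up to \(\mathcal H^n\)-null sets. Inserting this into \((\ast)\) gives \eqref{eq:int-RTT-bf}, with the lateral term \(-\int_M\phi\,\DBn^t_M\,d\mathcal H^n\), which is exactly the stated definition.
\item To justify reading this term as \(\int_\sigma^\tau\int_{\partial G(t)}\phi V_n\,dS\,dt\), I would apply the coarea formula on the \(\mathcal H^n\)-rectifiable set \(M\) to the projection \((t,\DBx)\mapsto t\). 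Its tangential Jacobian is \(\|\DBn^\DBx_M\|\), which is nonzero a.e. by assumption.
\item With \(g:=\phi\,\DBn^t_M/\|\DBn^\DBx_M\|=-\phi V_n\) this gives \(\int_M g\,\|\DBn^\DBx_M\|\,d\mathcal H^n=\int_\sigma^\tau\int_{M\cap\{t\}}g\,d\mathcal H^{n-1}\,dt\). I would apply it separately to \(g^\pm\), which is legitimate because the left-hand integrands \(|\phi\,\DBn^t_M|\le|\phi|\) are integrable.
\item Hence \(t\mapsto\int_{\partial G(t)}\phi V_n\,dS\) is in \(L^1(\sigma,\tau)\) without any bound on \(V_n\). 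Here one also uses that a.e. slice \(M\cap\{t\}\) coincides \(\mathcal H^{n-1}\)-a.e. with \(\partial G(t)\), which is immediate from the definition of \(M\).
\end{itemize}

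\textbf{Step 3: differential form.} If the hypotheses hold on every compact subinterval, apply \eqref{eq:int-RTT-bf} with \(\sigma\) fixed and variable \(\tau\). The right side is \(\text{const}+\int_\sigma^\tau f(t)\,dt\) with \(f(t):=\int_{G(t)}\phi_t\,d\DBx+\int_{\partial G(t)}\phi V_n\,dS\). The first part of \(f\) is in \(L^1\) by Fubini, and the second by Step~2, so \(f\in L^1_{\rm loc}(J)\). Therefore \(t\mapsto\int_{G(t)}\phi\,d\DBx\) is absolutely continuous, and by Lebesgue's differentiation theorem its derivative equals \(f\) a.e. If \(f\) has a continuous representative, the fundamental theorem of calculus gives the identity for every \(t\in J\).
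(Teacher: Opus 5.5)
Your proof is correct, but it takes a different route from the paper's. Your Steps~2 and~3 coincide with the paper's argument. The difference is in Step~1.

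\textbf{The two routes.} The paper regularizes the integrand. It first proves the identity for $\phi\in W^{1,1}(Q^\circ)$ by applying the Sobolev Gauss--Green formula to $(\phi,0)$. It then removes this assumption with Lemma~\ref{lem:one-directional-approximation}: mollifying a Tietze extension gives $\phi_j\to\phi$ uniformly on $Q$ and $\partial_t\phi_j\to\phi_t$ in $L^1(Q^\circ)$. You instead regularize the domain. You test the weak time derivative against Lipschitz cut-offs $\eta_\varepsilon\to\chi_{Q^\circ}$ and pass to the limit in the measures $\partial_t\eta_\varepsilon\,d(t,\DBx)$. This is the standard route to Gauss--Green for divergence-measure fields (cf.\ the paper's remark citing \cite{ChenTorresZiemer}).

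\textbf{What each buys.} Your argument never smooths $\phi$ and needs no trace theory. It only uses the classical identity $D\chi_{Q^\circ}=-\DBn_Q\,\mathcal H^n$ restricted to $\partial Q$. The paper's argument stays within the Sobolev Gauss--Green framework, which it reuses for the Sobolev-trace variant in Remark~\ref{rem:Sobolev-form}. Both rest on the same boundary-layer bound $\mathcal L^{n+1}(\{z\in Q^\circ:\operatorname{dist}(z,\partial Q)<\varepsilon\})\le C\varepsilon$, which the paper proves inside Lemma~\ref{lem:one-directional-approximation}.

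\textbf{The step you flag as the main obstacle is more than you need.} The functional $\mu\mapsto\int\phi\,d\mu^t$ is linear, and all the measures $\partial_t\eta_\varepsilon\,d(t,\DBx)$ are supported in the fixed compact set $Q$. So distributional convergence plus a uniform bound on their total variations already gives
\[
\int\phi\,\partial_t\eta_\varepsilon\,d(t,\DBx)\to-\int_{\partial Q}\phi\,\DBn_Q^t\,d\mathcal H^n .
\]
To see this, extend $\phi$ by Tietze and approximate it uniformly by smooth functions. The uniform bound is exactly the boundary-layer estimate above. Hence you need neither the exact inner Minkowski content nor strict convergence nor Reshetnyak's theorem. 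This matters, because your proposed chart verification compares $d$ with the vertical distance to the graph. That comparison only gives two-sided bounds on $\varepsilon^{-1}\mathcal L^{n+1}(\{0<d<\varepsilon\})$, not the exact limit $\mathcal H^n(\partial Q)$. Your graph-coordinate fallback with a partition of unity also works as a complete elementary proof.

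\textbf{One small slip.} $\min\{1,d/\varepsilon\}$ is positive on all of $Q^\circ$, so it is not compactly supported there. Use $\min\{1,(d-\varepsilon)^+/\varepsilon\}$ instead. Alternatively, approximate $\eta_\varepsilon$ by $(\eta_\varepsilon-\delta)^+$ and use dominated convergence, which is legitimate since $\phi$ is bounded on $Q$ and $\phi_t\in L^1(Q^\circ)$.
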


\begin{proof}
We suppress the subscript ${\sigma,\tau}$ for better readability.

1. We first assume in addition that \(\phi_{|Q^\circ}\in W^{1,1}(Q^\circ)\), and define
\[
  F(t,\DBx):=(\phi(t,\DBx),0)\in\R\times\R^n.
\]
Then \(F\in W^{1,1}(Q^\circ;\R^{n+1})\) with $\operatorname{div}_{(t,\DBx)}F=\phi_t$.
The Sobolev Gauss--Green formula on the bounded Lipschitz domain \(Q^\circ\)
(see Section~1.5.3 of Grisvard~\cite{Grisvard} and Chapter~3,
Section~1, Theorem~1.1 of Ne\v{c}as~\cite{Necas1967}) gives
\[
  \int_{Q^\circ} \phi_t\,d(t,\DBx)
  =
  \int_{\partial Q}\phi\,\DBn_Q^t\,d\mathcal H^n.
\]
Since \(\phi\in C(Q)\), its Sobolev trace on \(\partial Q\) agrees
with its pointwise boundary values. Using the boundary decomposition of \(Q\),
with outer normal \((-1,0)\) on the lower time slice and \((1,0)\) on the
upper time slice, this gives
\[
  \int_{Q^\circ} \phi_t\,d(t,\DBx)
  =
  -\int_{G(\sigma)}\phi(\sigma,\DBx)\,d\DBx
  +
  \int_{G(\tau)}\phi(\tau,\DBx)\,d\DBx
  +
  \int_M \phi\,\DBn_M^t\,d\mathcal H^n .
\]
On \(M\), the coarea formula (see, e.g., \cite{leoni2017first}) for the time
projection \((t,\DBx)\mapsto t\) yields
\[
  d\mathcal H^n
  =
  \frac{1}{\|\DBn_M^\DBx\|}\,dS\,dt .
\]
Together with
$V_n=- \DBn_M^t / \|\DBn_M^\DBx\|$,
this gives, as signed measures on \(M\),
\[
  \DBn_M^t\,d\mathcal H^n=-V_n\,dS\,dt .
\]
Hence
\[
  \int_M \phi\,\DBn_M^t\,d\mathcal H^n
  =
  -\int_\sigma^\tau\int_{\partial G(t)}
  \phi(t,\DBx)V_n(t,\DBx)\,dS\,dt .
\]
This identity also shows why no separate assumption on \(V_n\) is needed:
\[
  |V_n|\,dS\,dt
  =
  |\DBn_M^t|\,d\mathcal H^n
  \le d\mathcal H^n
  \quad\text{on }M,
\]
and \(M\) has finite \(\mathcal H^n\)-measure because \(Q^\circ\) is a bounded
Lipschitz domain. The preceding identity hence gives (\ref{eq:int-RTT-bf}) under the additional
assumption \(\phi_{|Q^\circ}\in W^{1,1}(Q^\circ)\).

To remove this auxiliary assumption, we apply
Lemma~\ref{lem:one-directional-approximation} with \(U=Q^\circ\subset\R^{n+1}\) and \(D_i=\partial_t\).
Thus there are \(\phi_j\in C^\infty(\R\times\R^n)\) such that
\[
  \phi_j\to\phi
  \quad\text{uniformly on }Q,
  \qquad
  \partial_t\phi_j\to\phi_t
  \quad\text{in }L^1(Q^\circ).
\]
The identity already proved under the auxiliary assumption \(\phi_{|Q^\circ}\in W^{1,1}(Q^\circ)\) applies to
each \(\phi_j\), and the limit \(j\to\infty\) gives the asserted formula.

2. If the assumptions hold for all $[\sigma ,\tau ]\subset J$, the integral identity (\ref{eq:int-RTT-bf}) holds on every such subinterval. 
The function\vspace{-0.05in}
\[
  t\mapsto \int_{G(t)}\phi_t(t,\DBx)\,d\DBx
\]
belongs to \(L^1_{\mathrm{loc}}(J)\), and the lateral contribution is locally
integrable since
\[
  \int_\sigma^\tau\int_{\partial G(t)}
  |\phi V_n|\,dS\,dt
  =
  \int_M |\phi\DBn_M^t|\,d\mathcal H^n
  \le
  \|\phi\|_{L^\infty(M)}\mathcal H^n(M)<\infty .
\]
Therefore \(t\mapsto\int_{G(t)}\phi(t,\DBx)\,d\DBx\) is absolutely continuous
on compact subintervals, and differentiation gives the asserted identity for
almost all \(t\in J\).

Finally, if the right-hand side as a function of \(t\) has a continuous
representative \(R(t)\), then (\ref{eq:int-RTT-bf}) gives\vspace{-0.05in}
\[
  \int_{G(\tau)}\phi(\tau,\DBx)\,d\DBx
  =
  \int_{G(\sigma)}\phi(\sigma,\DBx)\,d\DBx
  +
  \int_\sigma^\tau R(t)\,dt .
\]
The fundamental theorem of calculus therefore shows that
\(t\mapsto\int_{G(t)}\phi(t,\DBx)\,d\DBx\) is \(C^1\), with derivative \(R(t)\)
for every \(t\in J\).\vspace{0.1in}
\end{proof}

\begin{remark}
The assumption that \(Q_{\sigma,\tau}\) is a compact Lipschitz domain with the
natural time-slice/lateral boundary decomposition is not merely a pointwise
spatial regularity assumption on the slices \(G(t)\). When it is imposed on
every subinterval \([\sigma,\tau]\subset J\), it also controls the geometry
created by the artificial time faces. This is a strong space-time regularity
requirement, although it does not force the individual slices \(G(t)\) to be
Lipschitz domains in space, as the following example shows.
Let
\(K\subset\R^2\) be a compact regular closed set with an inward cusp,
hence not a compact Lipschitz domain, and let \(\rho\) be the signed distance to \(\partial K\), chosen negative in
\( K^\circ \) and positive in \(\R^2\setminus K\).
If
\[
        G(t):=\{x:\rho(x)\le t\},
\]
then \(G(0)=K\). However, for every \([\sigma,\tau]\) close to \(0\),
\[
        \operatorname{gr}(G_{|[\sigma,\tau]})
        =
        \{(t,x):\sigma\le t\le\tau,\ \rho(x)\le t\},
\]
whose lateral boundary is the Lipschitz graph \(t=\rho(x)\), with the natural
time-slice/lateral boundary decomposition. The RTT is hence formulated directly
in terms of the space-time tube, rather than by imposing Lipschitz regularity
on the individual spatial slices.

Thus the hypothesis on the space-time tube  contains information on the time evolution of the lateral
boundary $M_{\sigma,\tau}$
and not primarily on the individual sets \(G(t)\). Geometrically, it encodes a certain
transversality of the lateral boundary with respect to the time slices.
Equivalently, the restricted space-time tube must satisfy a two-sided
cone condition near the lateral boundary, in particular at the intersections
with the artificial time faces.
This relates to uniform bounds on the speed of normal displacement. Indeed, the estimate \(|V_n|\le C\) implies\vspace{-0.1in}
\[
        |\DBn_M^\DBx|\ge \frac{1}{\sqrt{1+C^2}},
\]
so that the space-time normal cannot become purely temporal.
In particular, an instantaneous spatial jump of the sets \(G(t)\) is excluded:
such a jump would create an additional time-slice type boundary part inside
the interval, with purely temporal space-time normal.
\end{remark}

\begin{remark}
One could avoid the approximation step in the proof of Theorem~\ref{thm:single-phase-boundary-rtt} by using a
Gauss--Green theorem for divergence-measure fields; see, for instance,
\cite{ChenTorresZiemer, Schurichtbook}. Indeed, for \(F=(\phi,0)\) one has\vspace{-0.05in}
\[
  \operatorname{div}_{(t,\DBx)}F=\phi_t\in L^1(Q^\circ),
\]
but \(F\) need not belong to \(W^{1,1}(Q^\circ;\R^{n+1})\) as no spatial
weak derivatives of \(\phi\) are assumed. The approach above keeps the argument
within the classical Sobolev Gauss--Green formula.
\end{remark}

\begin{remark}\label{rem:Sobolev-form}
There is also a simpler Sobolev-trace version of Theorem~\ref{thm:single-phase-boundary-rtt}.
If \(\phi\in W^{1,1}(J\times\Omega)\), then
\(\phi\in W^{1,1}(J;L^1(\Omega))\) and we identify \(\phi\) with its
Bochner--Sobolev representative in \(C(J;L^1(\Omega))\). The same proof,
without the approximation step, gives the integrated identity with
\(\phi_t=\partial_t\phi\). In this formulation the lateral term is understood
with the Sobolev trace of \(\phi_{|Q_{\sigma,\tau}^\circ}\) on \(M_{\sigma,\tau}\),
namely
\[
  \int_\sigma^\tau\int_{\partial G(t)}
  \operatorname{Tr}_M\phi\,V_n\,dS\,dt
  :=
  -\int_{M_{\sigma,\tau}}
  \operatorname{Tr}_M\phi\,\DBn_M^t\,d\mathcal H^n.\vspace{0.1in}
\]
\end{remark}
\noindent
We record the following divergence form RTT in case the motion comes from a flow map.

\begin{corollary}[Flow-map version in divergence form]
\label{cor:flow-map-rtt}
Let \(J=(a,b)\subset\R\), let \(\Omega\subset\R^n\) be open, and let
\(\DBv\in L^\infty(J\times\Omega;\R^n)\) be uniformly Lipschitz in space, i.e.
\[
  |\DBv(t,\DBx)-\DBv(t,\DBy)|
  \le L|\DBx-\DBy|
  \quad \mbox{for a.e.\ \(t\in J\) and all } \DBx,\DBy\in\Omega.
\]
Let
\(t_0\in J\), \(G_0\subset\Omega\) be a compact Lipschitz domain and choose $r>0$ such that $G(t):=\Phi^t_{t_0}(G_0)$ is well-defined for all $t\in J_0:=(t_0-r, t_0+r)\cap J$.

Then there exists \(r^\ast\in(0,r)\) such that, with $ J_0^\ast:=(t_0-r^\ast,t_0+r^\ast)\cap J$, one has 
\[
  \frac{d}{dt}\int_{G(t)}\phi(t,\DBx)\,d\DBx
  =
  \int_{G(t)}
  \bigl(\partial_t\phi+\operatorname{div}(\phi\DBv)\bigr)(t,\DBx)\,d\DBx \quad \mbox{a.e.\ on } J_0^\ast
\]
for every\vspace{-0.1in}
\[
  \phi\in W^{1,1}(J_0^\ast\times\Omega),
\]
where $\phi$ is identified with its Bochner--Sobolev representative in
\[
  W^{1,1}(J_0^\ast;L^1(\Omega))
  \hookrightarrow C(J_0^\ast;L^1(\Omega)).\vspace{-0.1in}
\]
If, in addition,\vspace{-0.05in}
\[
  \DBv\in C(J_0\times\Omega;\R^n),
  \qquad
  \nabla_\DBx\DBv\in C(J_0\times\Omega;\R^{n\times n}),
\]
then one may take \(r^\ast=r\). If also
\(\phi\in C^1(J_0\times\Omega)\), then the identity holds for every
\(t\in J_0\).
\end{corollary}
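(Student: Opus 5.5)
The plan is to reduce the corollary to Theorem~\ref{thm:single-phase-boundary-rtt} in its Sobolev-trace form (Remark~\ref{rem:Sobolev-form}), with the lateral term rewritten through the divergence theorem on each slice. Alternatively, I would follow the proof of Theorem~\ref{DBthm:1ph-transport-theorem-comovingCV} directly, replacing the $C^1$-diffeomorphism by a bi-Lipschitz homeomorphism.

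\textbf{Step 1: the space-time map is bi-Lipschitz.} Let $T(t,\DBx)=(t,\Phi^t_{t_0}(\DBx))$ on $J_0\times G_0$, using Carath\'eodory solutions. Since $\DBv$ is bounded and uniformly Lipschitz in $\DBx$, Gronwall gives
\[
|\Phi^t_{t_0}(\DBx)-\Phi^t_{t_0}(\DBy)|\le e^{L|t-t_0|}|\DBx-\DBy|
\quad\text{and}\quad
|\Phi^t_{t_0}(\DBx)-\Phi^s_{t_0}(\DBx)|\le \|\DBv\|_\infty|t-s|.
\]
The same estimates hold for the inverse $(t,\DBx)\mapsto(t,\Phi^{t_0}_t(\DBx))$. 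Hence $T$ is bi-Lipschitz. Moreover, $\Phi^t_{t_0}-\mathrm{Id}$ has Lipschitz constant $e^{L|t-t_0|}-1$, so for $r^\ast$ small it is a small Lipschitz perturbation of the identity.

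\textbf{Step 2: the tube is a Lipschitz domain (main obstacle).} Bi-Lipschitz images of Lipschitz domains are not Lipschitz domains in general, so this needs an argument. I would use the local graph description of $(\sigma,\tau)\times G_0^\circ$ from Prop.~\ref{prop:Cartesian-product-Lipschitz-domain}: near a lateral point it is the region below $x_n=g(x')$ in rotated coordinates, with $g$ Lipschitz and independent of $t$. Near the corners $t=\sigma,\tau$ one uses the product structure.

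The image under $T=\Id+(0,\Phi-\Id)$, with a perturbation of small Lipschitz constant $\varepsilon$, remains a graph over the same hyperplane. This follows from a Lipschitz implicit-function argument: $x_n\mapsto x_n+\text{(perturbation)}$ is strictly monotone, because $\varepsilon$ is less than the cone aperture. The graph is Lipschitz both in $x'$ and in $t$, since $t$-Lipschitz dependence comes from $\|\DBv\|_\infty$. This is why $r^\ast$ is needed, and why one restricts to $[\sigma,\tau]\subset J_0^\ast$.

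The same construction yields the natural boundary decomposition $\partial Q=\{\sigma\}\times G(\sigma)\cup\{\tau\}\times G(\tau)\cup M$, with $M=T((\sigma,\tau)\times\partial G_0)$. It also gives $\DBn^\DBx_M\neq0$ a.e., because $M$ is locally a graph over spatial directions.

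In the $C^1$ case, $T$ is a $C^1$-diffeomorphism and the argument of Theorem~\ref{DBthm:1ph-transport-theorem-comovingCV} gives $r^\ast=r$.

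\textbf{Step 3: tangency and the divergence theorem in space-time.} Set $\DBw=(1,\DBv)$. Apply the Sobolev Gauss--Green formula on $Q^\circ$ to $\phi\DBw\in W^{1,1}$; this holds because $\DBv\in W^{1,\infty}$ in $\DBx$, and since $\phi$ is Sobolev in $t$, the product $\phi\DBv$ is as well, with $\DBv$ only bounded in $t$. The $t$-component of the field is $\phi$ itself, so the space-time divergence $\partial_t\phi+\operatorname{div}(\phi\DBv)$ is meaningful in $L^1$.

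It remains to show $\DBw\cdot\DBn_M=0$ a.e. on $M$. At $\mathcal H^n$-a.e. point, $M$ is differentiable, being the Lipschitz image of $(\sigma,\tau)\times\partial G_0$ via Rademacher and the area formula. Moreover, for a.e. parameter $(t,\DBy)$ the curve $t\mapsto(t,\Phi^t_{t_0}(\DBy))$ lies in $M$ and has tangent $(1,\DBv)$ at a.e. $t$. A Fubini argument over the parametrization then makes $\DBw$ tangent $\mathcal H^n$-a.e.

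This gives the integrated identity. Absolute continuity then yields the a.e. derivative identity, and for $\phi\in C^1$ continuity gives it for all $t$, exactly as in Theorem~\ref{DBthm:1ph-transport-theorem-comovingCV}.
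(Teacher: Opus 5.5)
Your Step~3 does not go through as written. You apply the Sobolev Gauss--Green formula on $Q^\circ$ to $\phi\DBw=(\phi,\phi\DBv)$ and assert $\phi\DBw\in W^{1,1}(Q^\circ;\R^{n+1})$. Your reason is that $\phi\DBv$ is Sobolev in $t$ because $\phi$ is and $\DBv$ is bounded in $t$. That is false. Membership in $W^{1,1}$ requires $\partial_t(\phi\DBv)\in L^1$, which needs time regularity of $\DBv$, and none is assumed. For example, $\DBv(t,\DBx)=\operatorname{sgn}(t-t_1)\DBe_1$ satisfies all hypotheses (with $L=0$), yet $\phi\DBv$ jumps by $2\phi(t_1,\cdot)\DBe_1$ across $t=t_1$.

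The same objection applies to the case with the additional hypothesis, where you say that ``the argument of Theorem~\ref{DBthm:1ph-transport-theorem-comovingCV} gives $r^\ast=r$''. There $\DBv$ and $\nabla_\DBx\DBv$ are continuous, but $\DBv=w(t)\DBe_1$ with $w$ continuous and nowhere differentiable is still admissible. Theorem~\ref{DBthm:1ph-transport-theorem-comovingCV}, by contrast, used joint $C^1$-regularity of $\DBv$ precisely to get $\psi\DBw\in W^{1,1}$.

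It also does not help that the space-time divergence $\partial_t\phi+\operatorname{div}(\phi\DBv)$ lies in $L^1$. For a field that is merely $L^1$ with $L^1$ divergence, Gauss--Green holds only with a weakly defined normal trace on $\partial Q$. You would still have to identify that trace on the time slices and show it vanishes on $M$. The pointwise a.e.\ tangency $\DBw\cdot\DBn_M=0$ does not give this without a further approximation argument.

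The repair is the route announced in your first sentence, which is the one the paper takes:
\begin{itemize}
\item Apply the Sobolev-trace version of Theorem~\ref{thm:single-phase-boundary-rtt} (Remark~\ref{rem:Sobolev-form}) to $F=(\phi,0)$. This field does lie in $W^{1,1}$, since only $\phi$ is differentiated in time.
\item Use the tangency of $(1,\DBv)$ to $M$ only to rewrite $\DBn_M^t=-\DBv\cdot\DBn_M^\DBx$, i.e.\ $V_n=\DBv\cdot\DBn_G$.
\item Identify the lateral trace of $\phi$ with the trace of $\phi(t,\cdot)$ on $\partial G(t)$ for a.e.\ $t$.
\item For a.e.\ $t$, $\phi(t,\cdot)\in W^{1,1}$ and $\DBv(t,\cdot)$ is bounded and Lipschitz, so $\phi(t,\cdot)\DBv(t,\cdot)\in W^{1,1}(G(t)^\circ)$. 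The spatial Gauss--Green formula on the Lipschitz slice $G(t)$ then turns the lateral term into $\int_{G(t)}\operatorname{div}(\phi\DBv)\,d\DBx$.
\end{itemize}
This way $\DBv$ is differentiated only in $\DBx$, at fixed $t$.

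Your Step~2 is adequate for this and differs from the paper only in technique. You keep the non-small time slope $\|\DBv\|_\infty$ of the lateral graph and would treat the time corners by a common graph direction tilted in time. The paper instead rescales $t=t_0+a\theta$ so that the whole space-time map becomes a small Lipschitz perturbation of the identity, and then applies Corollary~\ref{cor:closed-lipschitz-small-perturbation}. Both approaches also show that each slice $G(t)$ is a Lipschitz domain, which the slice-wise divergence theorem requires.
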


\begin{proof}
For short times, the flow map is a small Lipschitz perturbation of the
identity. Indeed, Gronwall's inequality gives
\[
  \operatorname{Lip}(\Phi^t_s-\operatorname{Id})
  \le e^{L|t-s|}-1 .
\]
Choosing \(r^\ast>0\) sufficiently small,
Corollary~\ref{cor:closed-lipschitz-small-perturbation} implies that
\(G(t)=\Phi^t_{t_0}(G_0)\) is a compact Lipschitz domain for
\(t\in J_0^\ast\).

The space-time tube requires one additional point: the time variable has to be
rescaled before applying the small-perturbation result. Indeed, for
\(|s-t_0|,|t-t_0|<r^\ast\),
\[
\begin{aligned}
&\left|
  \bigl(\Phi^t_{t_0}(\DBx)-\DBx\bigr)
  -
  \bigl(\Phi^s_{t_0}(\DBy)-\DBy\bigr)
\right|  \\
&\qquad\le
\left|
  \bigl(\Phi^t_{t_0}(\DBx)-\DBx\bigr)
  -
  \bigl(\Phi^t_{t_0}(\DBy)-\DBy\bigr)
\right|
+
\left|\Phi^t_{t_0}(\DBy)-\Phi^s_{t_0}(\DBy)\right|  \\
&\qquad\le
\bigl(e^{Lr^\ast}-1\bigr)|\DBx-\DBy|
+
\|\DBv\|_{L^\infty}\,|t-s|.
\end{aligned}
\]
The last coefficient is not made small by decreasing \(r^\ast\). We therefore
fix a number \(a>0\) and write \(t=t_0+a\theta\). For
\[
  \mathcal S_a(\theta,\DBx)
  :=\bigl(\theta,\Phi^{t_0+a\theta}_{t_0}(\DBx)\bigr)
\]
one obtains
\[
\begin{aligned}
&\left|
  \bigl(\Phi^{t_0+a\theta}_{t_0}(\DBx)-\DBx\bigr)
  -
  \bigl(\Phi^{t_0+a\eta}_{t_0}(\DBy)-\DBy\bigr)
\right| \\
&\qquad\le
\bigl(e^{Lr^\ast}-1\bigr)|\DBx-\DBy|
+
 a\|\DBv\|_{L^\infty}\,|\theta-\eta|.
\end{aligned}
\]
Hence, choosing first \(a>0\) sufficiently small and then decreasing
\(r^\ast>0\) if necessary, the perturbation
\(\mathcal S_a-\operatorname{Id}\) has Lipschitz constant below the
small-perturbation threshold from
Corollary~\ref{cor:closed-lipschitz-small-perturbation}. This threshold can be
chosen uniformly for the Cartesian products
\[
  [ (\sigma-t_0)/a, (\tau-t_0)/a ]\times G_0,
  \qquad [\sigma,\tau]\subset J_0^\ast,
\]
by the compact-domain consequence of
Proposition~\ref{prop:Cartesian-product-Lipschitz-domain} and its uniform
cone-angle statement. Thus Corollary~\ref{cor:closed-lipschitz-small-perturbation}
shows that the rescaled tubes
\[
  \widetilde Q_{\sigma,\tau}
  :=
  \mathcal S_a
  \bigl([ (\sigma-t_0)/a, (\tau-t_0)/a ]\times G_0\bigr)
\]
are compact Lipschitz domains.

The original space-time tubes are obtained from these rescaled tubes by the
linear \(C^1\)-diffeomorphism\vspace{-0.05in}
\[
  R_a(\theta,\DBx):=(t_0+a\theta,\DBx),
  \qquad
  Q_{\sigma,\tau}=R_a(\widetilde Q_{\sigma,\tau}).
\]
By Lemma~\ref{DBthm:C1-invariance-bounded-Lipschitz}, each
\(Q_{\sigma,\tau}\) is therefore a compact Lipschitz domain. Moreover,
\(R_a\circ\mathcal S_a\) is the original time-preserving homeomorphism
\[
  (\theta,\DBx)\mapsto
  (t_0+a\theta,\Phi^{t_0+a\theta}_{t_0}(\DBx)).
\]
Consequently, the product boundary decomposition of the rescaled cylinder is
mapped onto\vspace{-0.05in}
\[
        (\{\sigma\}\times G(\sigma))
        \cup
        (\{\tau\}\times G(\tau))
        \cup
        M_{\sigma,\tau},
\]
up to the edge sets of codimension two. Hence \(Q_{\sigma,\tau}\) has the
required boundary decomposition into time-slice and lateral parts.
On the lateral boundary, the vector \((1,\DBv)\) is tangential to the
space-time graph of the transported boundary. Hence
\[
  \DBn_M^t+\DBv\cdot\DBn_M^\DBx=0
  \quad\mathcal H^n\text{-a.e.},
\]
and therefore $V_n=\DBv\cdot\DBn_G$.

The Sobolev-trace version of Theorem~\ref{thm:single-phase-boundary-rtt} from Remark~\ref{rem:Sobolev-form}
therefore gives the boundary form. For almost every $t$, the lateral Sobolev trace occurring there agrees with the spatial Sobolev trace of $\phi(t,\cdot)$ on $\partial G(t)$; this follows, for instance, by approximating $\phi$ in $W^{1,1}$ by smooth functions and using the coarea formula on the lateral graph. Since
\(\DBv(t,\cdot)\) is Lipschitz and bounded on the relevant compact set, one has
\[
  \phi(t,\cdot)\DBv(t,\cdot)\in W^{1,1}(G(t)^\circ;\R^n)
\]
for a.a. \(t\in J_0^\ast\). The spatial Sobolev Gauss--Green formula on bounded Lipschitz domains
converts the boundary term into
\[
  \int_{G(t)}\operatorname{div}(\phi\DBv)(t,\DBx)\,d\DBx .
\]
This yields the asserted a.e. differential identity.

Under the additional regularity assumption, the flow map
\[
  (t,\DBx_0)\mapsto (t,\Phi^t_{t_0}(\DBx_0))
\]
is a \(C^1\)-diffeomorphism on the whole interval of existence \(J_0\). For
every compact subinterval \([\sigma,\tau]\subset J_0\), the cylinder
\([\sigma,\tau]\times G_0\) is a compact Lipschitz domain by the compact-domain
consequence of Proposition~\ref{prop:Cartesian-product-Lipschitz-domain}.
Therefore its image, the swept tube \(Q_{\sigma,\tau}\), is again a compact
Lipschitz domain by Lemma~\ref{DBthm:C1-invariance-bounded-Lipschitz}. Hence one
may take \(r^\ast=r\). If \(\phi\in C^1(J_0\times\Omega)\), the right-hand side
is continuous in \(t\), so the identity holds for every \(t\in J_0\).
\end{proof}

\section{Two-Phase Flow Kinematics}\label{sec:kinematics}
The investigation of the flow kinematics is based on the concept of \emph{fluid elements}, i.e.\ infinitesimal fluid volumes that move according to the underlying velocity field. 
Since the velocity field, given as\vspace{-0.1in}
\begin{equation}\label{DBeq:2ph-velocity1}
\DBv (t, \DBx) =
\begin{cases}
\DBv^+ (t, \DBx) & \mbox{if } \DBx \in \Omega^+ (t),\\
\DBv^- (t, \DBx) & \mbox{if } \DBx \in \Omega^- (t),
\end{cases}
\end{equation}
will, in general, not be continuous at $\iface (t)$, it is not immediately clear what happens if a fluid particle reaches the interface.
To understand the impact of the jump conditions between two fluid bulk phases, we are going to study the flow kinematics close to the interface. For this purpose, we first complement the jump conditions (\ref{E3}), (\ref{E4}) by a transmission condition for momentum transfer that describes the effect of possible (one-sided) friction between the bulk fluids and the interface. The constitutive bulk-interface momentum transfer relations need to be consistent with the second law of thermodynamics in the sense that they render the local one-sided entropy productions non-negative, i.e.
\begin{equation}\label{eq-mom-trans-entropy}
- (\vbf^\pm -\vbf^\Sigma)_{||} \cdot (\Sbf^\pm  \nbf^\pm)_{||} \geq 0,
\end{equation}
where $\nbf^\pm$ denotes the outer normal to $\Omega^\pm$ at $\iface$; see \cite{Bo-interface-mass} for a detailed derivation. 
Employing a linear closure in the co-factors yields 
\begin{equation}\label{E42}
\alpha^\pm (\vbf^\pm -\vbf^\Sigma)_{||}+ (\Sbf^\pm \cdot \nbf^\pm)_{||}=0 \quad\text{ with }\; \alpha^\pm \geq 0.
\end{equation}
Equations (\ref{E42}) are two one-sided Navier-type boundary conditions, where $\alpha^\pm$ are material-dependent parameters. When we call the counterexample below physically consistent, this refers to compatibility with incompressibility, the bulk and interfacial mass and momentum balances, Newtonian bulk stresses, and the entropy-production condition \eqref{eq-mom-trans-entropy}; no complete thermal or compositional constitutive closure is intended. Now notice that the tangential interface velocity $\vbf^\Sigma_{||}$ is not defined from a momentum balance as the interface is considered massless ($\rho^\iface \equiv 0$). 
In order to give meaning to the one-sided Navier-type boundary conditions, we combine 
(\ref{E42}) with the tangential component of (\ref{eq-mom-jump}). We focus on the case of constant $\sigma$
and let $\dot{m}_-^+$ denote the mass transfer rate from $\Omega^-$ to $\Omega^+$. 
Moreover, $-[\![\DBS\cdot\DBn_\Sigma]\!]=\DBS^+\cdot\DBn^+ + \DBS^-\cdot\DBn^-$. Hence the tangential part of \eqref{eq-mom-jump} yields
\begin{equation}\label{eq-mom-jump-tan}
\dot{m}_-^+ \, ( \DBv^+_{||} - \DBv^-_{||} )  + (\DBS^+ \cdot \DBn^+)_{||} + (\DBS^- \cdot \DBn^-)_{||} 
= 0 \quad \text{ on } \Sigma.
\end{equation}
Summation of \eqref{E42} for both phase indices $\pm$ and use of \eqref{eq-mom-jump-tan} gives
\begin{equation}\label{eq-interface-velo}
\vbf^\Sigma_{||} =\frac{\alpha^+ - \dot{m}_-^+}{\alpha^+ + \alpha^-}\, \DBv^+_{||}
+ \frac{\alpha^- + \dot{m}_-^+}{\alpha^+ + \alpha^-}\, \DBv^-_{||},
\end{equation}
where we assume that $\alpha^+ + \alpha^->0$. Note that the case $\alpha^+ = \alpha^- =0$ refers to free slip of both fluid phases against the interface, in which case the bulk and interface tangential velocities are unrelated. The velocity in \eqref{eq-interface-velo} is a convex combination whenever $-\alpha^-\le \dot m_-^+\le\alpha^+$; in particular this holds under the symmetric sufficient condition $|\dot m_-^+|\le\min\{\alpha^+,\alpha^-\}$.

Substituting \eqref{eq-interface-velo} into the plus-side relation in \eqref{E42} yields
\begin{equation}\label{E42b}
\frac{\alpha^+ (\alpha^- + \dot{m}_-^+)}{\alpha^+ + \alpha^-} (\vbf^+ -\vbf^-)_{||}
+ (\Sbf^+ \cdot \nbf^+)_{||}=0.\vspace{0.05in}
\end{equation}
In the general case with possibly non-constant surface tension, the right-hand side of \eqref{E42b} is $\alpha^+\na_\Sigma\sigma/(\alpha^+ + \alpha^-)$; the coefficient on the left remains as displayed above.
\vskip3mm
To study the interface-near kinematics, we assume the one-sided bulk velocity fields and their traces to be jointly continuous in $(t,{\bf x})$ and locally uniformly Lipschitz continuous in the spatial variable ${\bf x}$ on the closure of the respective bulk phase, as in Assumption~\ref{DBass:two-phase-kinematic-setting} below. Then the initial value problems (\ref{DBeq:kinematic-ODE1}), governing the kinematics of fluid particles, have unique local solutions for any initial value not lying on the initial interface. To study the behavior of solutions at the interface, we distinguish between four  cases of increasing complexity.\\[1ex]
{\bf 1. No-slip and no phase change.}
In this case, the velocity field $\DBv$ is continuous across the interface, hence globally continuous. The locally uniform one-sided Lipschitz bounds up to the interface, together with continuity of the traces, imply by a standard segment argument that $\DBv(t,\cdot)$ is locally Lipschitz also across the interface. Hence the initial value problems (\ref{DBeq:kinematic-ODE1}) are uniquely solvable
for all initial values. Due to $\dot m =0$, we have $\DBv^\pm \cdot \DBn_\Sigma =V_\iface$. Hence solutions starting in $\bar{\Omega}^\pm (t_0)$ at $t=t_0$ stay in  $\bar{\Omega}^\pm (t)$ for all $t$. This property is called \emph{flow invariance}.
Consequently, solutions starting in 
$\iface (t_0)$ at $t=t_0$ stay in  $\iface (t)$ for all $t$, i.e.\ the interface is always composed of the same fluid
elements. In this case, the interface is called a \emph{material interface}.
Note that, as a consequence of the above, $\bar{\Omega}^\pm (\cdot) \setminus \iface (\cdot)$ is flow invariant as well.
In particular, solutions cannot reach $\iface$ from inside $\Omega^\pm$ in finite time.\\[1ex]
{\bf 2. Slip and no phase change.} In this case, the one-sided limits of the bulk velocity fields are, in general, not 
equal, i.e.\ $\DBv^+ \neq \DBv^-$ at $\iface$. Nevertheless, as $\DBv^\pm \cdot \DBn_\Sigma =V_\iface$ due to $\dot m =0$, 
the closure of the bulk phases, $\bar{\Omega}^\pm (\cdot)$, are flow invariant as solutions cannot cross the interface.
With $\vbf^\Sigma_{||}$ given by (\ref{eq-interface-velo}), the interfacial kinematic ODE, more precisely the initial value problems
\begin{equation}\label{DBeq:IVP-surface-full}
\dot \DBx^\iface (t) = \DBv^\iface (t, \DBx^\iface (t)) \; \mbox{ for } t\in J,\quad \DBx^\iface (t_0)=\DBx^\iface_0\in \iface(t_0),
\end{equation}
are well-posed due to
existence theorems for differential equations under time-dependent constraints; cf.\ \cite{Bo2} (specialized to single-valued ODEs). If, on the other hand, both one-sided velocity limits $\DBv^+,\, \DBv^-$ are admitted to govern the motion of fluid elements inside $\iface$, the interfacial kinematics is, in general, not uniquely determined. Indeed, at an interfacial initial value where $\DBv^+ \neq \DBv^-$, a continuum of solutions is possible; see the top row in Figure~\ref{DBFig_two-phase-comoving} and cf.\ the multivalued regularization introduced below.
However, as all these solutions stay in $\iface$, this has no impact on the kinematics inside the bulk phases.
Moreover, $\iface$ is again a material interface.\\[1ex]
{\bf 3. No-slip and phase change.} 
We consider the physically realistic situation in which the transmission condition (\ref{E3}) is satisfied for some locally Lipschitz functions $\rho^\pm >0$ (but not necessarily representing mass densities). In this case, the jump condition (\ref{E3})
implies the transversality condition
\begin{equation}\label{eq-transversality}
\mbox{sgn}\big( (\DBv^+ -\DBv^\Sigma) \cdot \DBn_\Sigma \big) = \mbox{sgn}\big( (\DBv^- -\DBv^\Sigma) \cdot \DBn_\Sigma \big).
\end{equation}
Evidently, any solution which reaches $\iface$ at some $t=t_0$ with non-vanishing relative normal velocity crosses the interface and enters the opposite bulk phase with possibly different, but again non-vanishing relative normal velocity.
Consequently, a problem concerning unique solvability can only occur if a solution reaches the interface with zero relative normal velocity.
Under the full transmission condition (\ref{E3}) together with no-slip, the analysis in the proof of Theorem~1 in \cite{Bo-2PH-ODE} shows that such tangential touching of the interface does not destroy the unique solvability. 
Hence, also  in the present case, the initial value problems (\ref{DBeq:kinematic-ODE1}) admit unique strong solutions
for all initial values.\\[1ex]
{\bf 4. Slip and phase change.} 
As in the case with no-slip and phase change, assuming the jump condition (\ref{E3}) to be satisfied for some locally Lipschitz functions $\rho^\pm >0$, solutions that reach the interface with non-zero relative normal speed will cross the interface and can be uniquely continued at least locally.
Non-uniqueness is therefore only possible if a solution reaches $\iface$ in some $(t_0,\DBx_0)$ such that $\DBv^\pm \cdot \DBn_\Sigma= V_\iface$ there, i.e.\ the solution touches the (moving) interface tangentially. Here the phase index $\pm$ attains the value of the bulk phase from which the solution reaches the interface. In cases where the solution runs through both bulk domains within arbitrarily small time intervals before reaching the interface, both phase indices are admissible.

Now, somewhat surprisingly, non-uniqueness can indeed happen even in 2D and for an autonomous flow field with $C^1$ one-sided velocities inside the phases.
To see this, we let
\begin{equation}\label{eq-counter-setting}
\iface (t) \equiv \R \times \{ 0 \}, \; \Omega^-=\R \times (-\infty, 0) \mbox{ and } \Omega^+=\R \times (0,\infty).
\end{equation}
The velocity field is defined as $\DBv(\DBx)=({\rm sgn}(x_2),x_1)$, i.e.\
\begin{equation}\label{eq-counter-ex}
\DBv^+ (x_1,x_2) = ( 1 , x_1 ),
\quad \DBv^- (x_1,x_2) = ( -1 , x_1 ).
\end{equation}
%
%

The velocity fields $\DBv^\pm$ are both continuously differentiable in their respective domains.
Evidently, there is slip between the fluid phases at $\iface$.
The velocity field satisfies the mass transfer jump condition for the constant density $\rho \equiv 1$ in both phases.
Moreover, the continuity equation is satisfied for this pair of $\rho$ and $\DBv$ as the velocity field satisfies $\DBdiv \DBv =0$.

The local mass transfer flux satisfies $\dot m \neq 0$ for all $\DBx \in \iface$ with $x_1 \neq 0$.
Consequently, non-uniqueness can only occur if a solution starts at or hits the point $\DBx_0 = (0,0)$.
In fact, for initial value $\DBx_0=(0,0)$, the initial value problem (\ref{DBeq:kinematic-ODE1}) 
has infinitely many strong solutions. Here, by a strong solution, we refer to an absolutely continuous function that satisfies the differential equation for almost all $t\geq t_0$. For the specific discontinuous ODE under consideration, strong solutions will be differentiable in every $t\geq t_0$ except for, possibly, a single time instant $\tau \geq t_0$ at which only one-sided derivatives exist.
The set of all solutions depends on which values we admit for $\DBv$ on the interface.
If we let $\DBv(x_1,0)=(0,x_1)$, or more generally use set-valued interface values all of whose vectors have second component $x_1$ and which contain $(0,x_1)$, the solution set
is the union of two one-parameter families $\DBx^\pm (\cdot \,;\tau)$, defined as 
\begin{equation}\label{DBeq:one-para-solution}
\DBx^\pm (t;\tau) =
\begin{cases}
(0, 0) & \mbox{ for } t_0 \leq t \leq \tau,\\
(\pm (t-\tau), \pm \frac 1 2 (t-\tau)^2) & \mbox{ for } t >\tau,
\end{cases}
\end{equation}
where the parameter $\tau$ runs through $[t_0,\infty]$, with $\tau=\infty$ representing the stationary solution $\DBx(t)\equiv(0,0)$.

We next make precise in what sense this elementary counterexample is compatible with the physical sharp-interface conditions specified above. Fix the orientation $\DBn_\Sigma=\DBe_2$. For the field \eqref{eq-counter-ex} one has $\dot m=x_1$. Take $\rho^\pm=1$, equal constant viscosities $\eta^\pm=\eta>0$, $\alpha^\pm=\eta$, $\DBv^\Sigma=0$, and
\[
        p^+(x_1,x_2)=p_0-x_2,
        \qquad
        p^-(x_1,x_2)=p_0+x_2.
\]
Then the bulk mass and momentum equations hold with $\DBb^\pm=0$, the interfacial mass balance and both Navier laws are satisfied, the corresponding entropy productions in \eqref{eq-mom-trans-entropy} are non-negative, and the normal component of the interfacial momentum balance holds. For constant surface tension, however, its tangential component leaves precisely the term
\[
        \dot m(\DBv^+-\DBv^-)_{||}
        +(\DBS^+\DBn^+)_{||}+(\DBS^-\DBn^-)_{||}
        =2x_1\DBe_1.
\]
Thus only this tangential interfacial balance remains to be repaired.

There are at least two particularly simple fixes which leave the velocity field \eqref{eq-counter-ex} unchanged. First, allow a Marangoni force and choose
\begin{equation}\label{eq:localized-sigma}
        \sigma(x_1)=\sigma_0+x_1^2,\qquad \sigma_0>0,
        \qquad \partial_{x_1}\sigma=2x_1 .
\end{equation}
Then the missing tangential term is exactly $\nabla_\Sigma\sigma$, while all other choices above remain unchanged. Second, one may keep $\sigma$ constant and, locally on $|x_1|<\eta_0$, take
\[
        \eta^+(x_1)=\eta_0+x_1,
        \qquad
        \eta^-(x_1)=\eta_0-x_1,
        \qquad
        \alpha^+=\alpha^-=\eta_0,
        \qquad
        \DBv^\Sigma_{||}=-\frac{x_1}{\eta_0}\DBe_1 .
\]
The two Newtonian tangential tractions then add up to $-2x_1\DBe_1$, so the constant-$\sigma$ tangential momentum balance and both Navier laws hold. In fact, with constant pressure the variable-viscosity stress divergence balances the convective acceleration, so the bulk momentum equations hold with $\DBb^\pm=0$. These two elementary variants already show that the non-uniqueness mechanism is compatible with the sharp-interface balance laws.

A third variant is useful because it keeps both the viscosity and the surface tension constant. Modify the bulk velocities away from the interface according to
\begin{equation}\label{eq:localized-traces}
\widetilde{\DBv}^{\,\pm}(x_1,x_2)
=
\Big(
\pm\big(1+\frac{x_1x_2}{\eta}\big),
\;x_1\mp\frac{x_2^2}{2\eta}
\Big).
\end{equation}
They are divergence-free and have exactly the same interface traces $(\pm1,x_1)$ as \eqref{eq-counter-ex}. With $\rho^\pm=1$, $\eta^\pm=\alpha^\pm=\eta$, constant $\sigma$, $p^\pm=0$, and $\DBv^\Sigma_{||}=-(x_1/\eta)\DBe_1$, their tangential Newtonian tractions are
\begin{equation}\label{eq:localized-entropy}
        (\DBS^+\DBn^+)_{||}=-(\eta+x_1)\DBe_1,
        \qquad
        (\DBS^-\DBn^-)_{||}=(\eta-x_1)\DBe_1.
\end{equation}
Hence the tangential momentum jump and both Navier laws hold exactly; the normal jump is also satisfied on the flat interface. Finally, define the manufactured body forces by
\begin{equation}\label{eq:localized-pressure}
        p^\pm=0,
        \qquad
        \DBb^\pm
        =(\widetilde{\DBv}^{\,\pm}\!\cdot\nabla)\widetilde{\DBv}^{\,\pm}
        -\eta\,\Delta\widetilde{\DBv}^{\,\pm}.
\end{equation}
Then \eqref{eq:localized-traces} is an exact steady solution of the forced incompressible two-phase Navier--Stokes system with constant density, viscosity and surface tension, including the interfacial mass and momentum balances and the Navier slip relations. Its interface traces are the same as those of \eqref{eq-counter-ex}; in particular, the waiting-time non-uniqueness at $(0,0)$ persists, although the departing trajectories are no longer parabolas.

The preceding explicit realizations are intentionally elementary. They are posed on an unbounded domain, and already the velocity field \eqref{eq-counter-ex} grows unboundedly with $|x_1|$; in the first repair \eqref{eq:localized-sigma} the surface tension is unbounded as well. Likewise, the choice $\rho^+=\rho^-=1$ is made only for simplicity. For arbitrary prescribed constants $\rho^\pm>0$, the same mechanism can be realized by prescribing a common interfacial mass flux $\dot m$ and choosing the one-sided normal velocities according to $(\DBv^\pm-\DBv^\Sigma)\cdot\DBn_\Sigma=\dot m/\rho^\pm$, with the normal stresses adjusted accordingly.

All unboundedness can also be removed. To see this without pursuing the construction in detail, start from the constant-$\eta$, constant-$\sigma$ variant \eqref{eq:localized-traces}. Choose a bounded smooth domain and a smooth closed stationary interface containing a flat arc on which the desired local kinematics is retained, and orient $\DBn_\Sigma=\DBn^-$. With a periodic arclength coordinate $s$, prescribe smooth bounded interface traces of the form
\[
        \DBv^\pm|_\Sigma=\pm a(s)\DBtau+h(s)\DBn^-,
        \qquad
        \DBv^\Sigma_{||}=-\frac{h(s)a(s)}{\eta}\DBtau,
        \qquad
        \DBv^\Sigma\cdot\DBn_\Sigma=0.
\]
where $a=1$ and $h=s$ on the relevant flat arc and $h$ is chosen with zero mean around the closed interface. The first normal derivatives can be chosen so that the Newtonian tangential tractions satisfy
\[
        (\DBS^+\DBn^+)_{||}=-a(\eta+h)\DBtau,
        \qquad
        (\DBS^-\DBn^-)_{||}=a(\eta-h)\DBtau.
\]
These are exactly the two Navier laws for the preceding traces, and their sum cancels the tangential transfer term $2ha\DBtau$, so the interfacial momentum balance holds with constant $\sigma$. The zero-mean condition on $h$ is the flux compatibility needed for smooth divergence-free phasewise extensions; in two dimensions such extensions can be constructed by stream functions whose boundary derivatives are chosen to realize the prescribed velocity traces and tangential stresses, with the velocity cut off away from the interface. The pressure jump can then be chosen to satisfy the normal interfacial balance, the pressures extended smoothly into the bulk, and the body forces defined from the steady Navier--Stokes equations. On the resulting bounded configuration all fields and forces are bounded, while $\rho$, $\eta$ and $\sigma$ may be kept constant and the local non-uniqueness mechanism is unchanged.

For the remainder of the manuscript, including Example~\ref{DBex:co-moving-tthm-example} and Appendix~\ref{app:attainable-sets-cusp}, we nevertheless return to the elementary field \eqref{eq-counter-ex}. This is solely for technical simplicity: its trajectories and attainable sets admit particularly transparent explicit formulas.
\section{Two-phase Flow Map}\label{sec:flow-map}
We have seen above that the kinematic differential equation (\ref{DBeq:kinematic-ODE1}), which in general has a discontinuous right-hand side, might not be uniquely solvable. A well-established concept to appropriately treat such discontinuous ODEs uses the theory of differential inclusions, based on an appropriate
passage from a discontinuous to a multi-valued right-hand side.
Roughly speaking, the right-hand side is modified by filling in the jumps at points of discontinuity. 
In mathematical terms this means to replace a
given (measurable) function $\DBf:J \times \Omega \to \R^n$ which is locally bounded in space, i.e.\ for every $t\in J$ and $\DBx\in\Omega$ there is $r>0$ such that $\DBf(t,B_r(\DBx)\cap\Omega)$ is bounded, by the
set-valued map\footnote{This is often named Krasovskii regularization or Krasovskii convexification.} $\DBF:J \times \Omega \to 2^{\R^n}\setminus\{\emptyset\}$, defined as
\begin{equation}\label{DBdef:setvalued-regularization}
\DBF(t,\DBx):= \bigcap\limits_{\delta>0} \overline{{\rm conv}}\, \DBf\big(t,B_\delta(\DBx)\cap \Omega \big) \quad\text{ for } t \in J,\; \DBx \in \Omega.
\end{equation}

One then considers the differential inclusion
\begin{equation}\label{DBeq:MVIVP}
\dot \DBx \in \DBF(t,\DBx(t))  \;\mbox{ a.e.\ on } J, \;\; \DBx(t_0)=\DBx_0
\end{equation}
instead of the initial value problem for the corresponding ODE with $\DBf$. If $I\subset J$ is an interval containing $t_0$, a \emph{strong solution} on $I$
of (\ref{DBeq:MVIVP}) is understood to satisfy
\begin{equation}\label{DBeq:DI-integrated}
\DBx(t) = \DBx_0 + \int_{t_0}^t \DBw(s) \,ds \; \mbox{ for all } t\in I
\end{equation}
with $\DBw\in L^1 (I;\R^n)$ being a so-called \emph{selection} of $\DBF(\cdot,\DBx(\cdot))$, i.e.\
\begin{equation}\label{DBeq:DI-selection-property}
\DBw(t) \in  \DBF(t,\DBx(t)) \mbox{ a.e.\ on } I.
\end{equation}
For a global strong solution one takes $I=J$.
By local boundedness, for all sufficiently small $\delta>0$ the sets in the intersection in (\ref{DBdef:setvalued-regularization}) are nonempty compact convex sets and are nested as $\delta\downarrow0$; restricting the intersection to such $\delta$ does not change it. Hence $\DBF(t,\DBx)$ is nonempty, compact and convex. It also follows that the set-valued maps $\DBF (t,\cdot):\Omega\to 2^{\R^n} \setminus \{\emptyset \}$ are \emph{upper semicontinuous}
(\emph{usc} for short), meaning that $\{\DBx \in \Omega : \DBF (t,\DBx ) \cap A \neq \emptyset \}$
is closed (in $\Omega$) for every closed $A\subset \R^n$.
Indeed, if $A\subset \R^n$ is closed and there is a sequence $(\DBx_m)\subset \Omega$ with
$\DBx_m \to \DBx \in \Omega$ and there are $\DBy_m \in \DBF (t,\DBx_m ) \cap A $ for every $m\in \N$, then
$(\DBy_m)$ is bounded, hence $\DBy_{m_k} \to \DBy$ for some subsequence and some $\DBy \in A$.
Fix $\delta>0$. For all sufficiently large $k$, one has $|\DBx_{m_k}-\DBx|<\delta/2$, and hence
\[
\DBy_{m_k}\in\DBF(t,\DBx_{m_k})
\subset \overline{{\rm conv}}\,\DBf\bigl(t,B_{\delta/2}(\DBx_{m_k})\cap\Omega\bigr)
\subset \overline{{\rm conv}}\,\DBf\bigl(t,B_\delta(\DBx)\cap\Omega\bigr).
\]
Passing to the limit gives $\DBy\in\overline{{\rm conv}}\,\DBf(t,B_\delta(\DBx)\cap\Omega)$. Since $\delta>0$ was arbitrary, $\DBy\in\DBF(t,\DBx)$.
It follows that $\DBy \in \DBF (t,\DBx)\cap A$, thus $\{\DBx \in \Omega : \DBF (t,\DBx ) \cap A \neq \emptyset \}$ is closed in $\Omega$.

{
For the specific two-phase multifunction introduced below, no separate general
selection criterion is needed. Under Assumption~\ref{DBass:two-phase-kinematic-setting},
its values are nonempty, compact and convex, and the joint continuity of the one-sided
traces together with the smooth motion of the interface implies that its graph is closed
on compact subsets of $J\times\overline\Omega\times\R^n$. Together with local
boundedness, this gives upper semicontinuity in $(t,\DBx)$ and, in particular, Borel
measurability of $t\mapsto\widehat{\DBv}(t,\DBx)$ for every fixed $\DBx$.
These are the measurability and semicontinuity properties used in the existence theorem
for the differential inclusion below. We therefore do not formulate a separate general
measurability criterion here. The relevant existence theory can be found, e.g., in the
monographs \cite{Ac, MDE, DBFil}.
}

To apply the regularization (\ref{DBdef:setvalued-regularization}) literally to the two-phase velocity field $\DBv$ from (\ref{DBeq:2ph-velocity1}), one may first assign an arbitrary value on $\iface(t)$ lying in the convex hull of the two one-sided traces. The resulting Krasovskii regularization is independent of this auxiliary interface assignment and is given by\vspace{-0.15in}
\begin{equation}\label{DBeq:2ph-multivalued-velocity}
\hat{\DBv}(t,\DBx)= \begin{cases}
\{\DBv^+(t,\DBx)\} &\text{ if } \DBx \in \Omega^+(t),\\
{\rm conv}\{\DBv^+(t,\DBx), \DBv^-(t,\DBx)\} &\text{ if } \DBx \in \iface(t),\\
\{\DBv^-(t,\DBx)\} &\text{ if } \DBx \in \Omega^-(t).
\end{cases}
\end{equation}
At points of $\partial\Omega$, which do not meet $\iface(t)$ by the standing assumption that the interface lies inside $\Omega$, we define $\hat{\DBv}$ by the corresponding one-sided phase trace. The impermeability condition in Assumption~\ref{DBass:two-phase-kinematic-setting} makes this boundary definition compatible with the state constraint $\bar\Omega$.
Note that $\hat{\DBv}(t,\DBx)$ is a singleton for $\DBx$ away from $\iface (t)$, but may be multivalued for $\DBx \in \iface (t)$. The latter happens, in general, if there is slip at the interface or if there is mass transfer across the interface.
For a smooth transition across the interfacial layer, the velocity may attain intermediate values between the two bulk traces. In the sharp-interface representation, only the one-sided traces remain. The Krasovskii regularization assigns their convex hull at the interface and thereby provides a set-valued interpolation between these traces. This motivates its use as the kinematic regularization considered below.
\begin{assum}[Two-phase kinematic setting]\label{DBass:two-phase-kinematic-setting}
Let $J=(a,b)\subset\R$ and let $\Omega\subset\R^n$ be open with $C^1$-boundary and outer normal $\DBn_\Omega$. Let $\{\iface(t)\}_{t\in J}$ be a $\mathcal C^{1,2}$-family of moving closed hypersurfaces inside $\Omega$, with normal field $\DBn_\Sigma$, which decomposes $\Omega$ into $\Omega^+(t)\cup\Omega^-(t)\cup\iface(t)$. Let the one-sided velocity fields
$\DBv^\pm:\gr(\overline{\Omega^\pm})\to\R^n$ be jointly continuous and locally uniformly Lipschitz in the spatial variable on the phase closures; that is, for every compact interval $I\Subset J$ and every compact set $K\Subset\Omega$ there is $L=L(I,K)$ such that
\[
 |\DBv^\pm(t,\DBx)-\DBv^\pm(t,\DBy)|\le L|\DBx-\DBy|
\]
for all $t\in I$ and all $\DBx,\DBy\in K\cap\overline{\Omega^\pm(t)}$. We also assume the impermeability condition\footnote{If $\partial\Omega$ is piecewise $C^1$ with possible edges or corners, this is replaced by a standard subtangential condition; cf.\ \cite{MDE}.}  $\DBv\cdot\DBn_\Omega=0$ on $\partial\Omega$. 
We use $\hat{\DBv}$ from \eqref{DBeq:2ph-multivalued-velocity}. When global-in-time attainable sets are considered, we additionally impose the linear growth bound
\begin{equation}\label{DBeq:linear-growth-velocity}
\sup \{\|\DBw \|:\DBw\in\hat{\DBv}(t,\DBx)\}
\le c \,(1+\|\DBx \|)
\qquad (t\in J,\, \DBx\in\overline\Omega)
\end{equation}
with some $c>0$.
\end{assum}
In this setting, Theorem~5.2 in \cite{MDE} yields the following result.
\begin{lemma}[Two-phase kinematic differential inclusion]\label{DBlem:kinematic-DI-existence}
Under the Assumption~\ref{DBass:two-phase-kinematic-setting}, the initial value problem
\begin{equation}\label{DBeq:kinematic-DI-IVP}
\dot{\DBx}(t)\in \hat{\DBv} (t,\DBx (t)) \; \mbox{ for } t\in J, \quad \DBx(t_0)=\DBx_0
\end{equation}
for the kinematic differential inclusion has a local strong solution for every $t_0\in J$ and $\DBx_0 \in \bar{\Omega}$, forward and backward in time.
If, in addition, the linear growth bound \eqref{DBeq:linear-growth-velocity} holds, then every local strong solution can be extended to a strong solution on all of $J$.
\end{lemma}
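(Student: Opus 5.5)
The plan is to reduce Lemma~\ref{DBlem:kinematic-DI-existence} to the abstract existence theorem for upper semicontinuous differential inclusions under state constraints (Theorem~5.2 in \cite{MDE}). For this I check its hypotheses for the multifunction $\hat{\DBv}$ on the constraint set $\overline\Omega$, and then derive global extendability from the linear growth bound by a Gronwall argument.

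\textbf{Step 1: structural properties of $\hat{\DBv}$.} The values of $\hat{\DBv}(t,\DBx)$ are nonempty, compact and convex. They are either singletons or the segment ${\rm conv}\{\DBv^+,\DBv^-\}$.

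Next I show that the graph of $\hat{\DBv}$ is closed in $J\times\overline\Omega\times\R^n$. Take $(t_k,\DBx_k,\DBy_k)\to(t,\DBx,\DBy)$ with $\DBy_k\in\hat{\DBv}(t_k,\DBx_k)$.
\begin{itemize}
\item If $\DBx\in\Omega^\pm(t)$, the points $\DBx_k$ eventually lie in $\Omega^\pm(t_k)$. Joint continuity of $\DBv^\pm$ then gives $\DBy=\DBv^\pm(t,\DBx)$.
\item If $\DBx\in\iface(t)$, the points $(t_k,\DBx_k)$ lie in $\gr(\overline{\Omega^+})$ or $\gr(\overline{\Omega^-})$. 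Each $\DBy_k$ is a convex combination $\lambda_k\DBv^+(t_k,\cdot)+(1-\lambda_k)\DBv^-(t_k,\cdot)$, with the convention that only the applicable trace enters off the interface. Along a subsequence $\lambda_k\to\lambda$.
\end{itemize}
In the second case I need the traces to converge. Here I use that the $\mathcal C^{1,2}$ structure makes $\gr(\overline{\Omega^\pm})$ closed near $\mathcal M$, so that $\DBv^\pm(t_k,\DBx_k')\to\DBv^\pm(t,\DBx)$. This gives $\DBy\in\hat{\DBv}(t,\DBx)$.

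Local boundedness follows from continuity of $\DBv^\pm$ on compact sets. Closed graph plus local boundedness yields upper semicontinuity in $(t,\DBx)$, and hence measurability in $t$.

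\textbf{Step 2: tangency condition on $\partial\Omega$.} Since $\iface(t)$ stays inside $\Omega$, near $\partial\Omega$ the multifunction is single-valued and continuous. The impermeability condition $\DBv\cdot\DBn_\Omega=0$ means that $\hat{\DBv}(t,\DBx)$ lies in the tangent (Bouligand) cone $T_{\overline\Omega}(\DBx)$ for $\DBx\in\partial\Omega$, because $\partial\Omega$ is $C^1$. Interior points impose no condition. Hence $\hat{\DBv}(t,\DBx)\cap T_{\overline\Omega}(\DBx)\neq\emptyset$ on $\gr(\overline\Omega)$.

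Theorem~5.2 of \cite{MDE} then gives a local strong solution forward in time with values in $\overline\Omega$. For the backward direction I apply the same theorem to the time-reversed inclusion $\dot\DBy\in-\hat{\DBv}(2t_0-s,\DBy)$. Its hypotheses are identical, since $-\DBv\cdot\DBn_\Omega=0$ and the reflected family $\iface(2t_0-s)$ is again $\mathcal C^{1,2}$.

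\textbf{Step 3: global extension.} Assume \eqref{DBeq:linear-growth-velocity}. Any strong solution satisfies $\|\DBx(t)\|\le\|\DBx_0\|+c\int_{t_0}^t(1+\|\DBx\|)\,ds$, so Gronwall bounds $\|\DBx\|$ on every compact subinterval of $J$. Then $\dot\DBx$ is bounded there, and $\DBx$ is Lipschitz, so the limit at the right end of a maximal interval $[t_0,\beta)$ with $\beta<b$ exists and lies in $\overline\Omega$, which is closed. Restarting at that point with Step~2 contradicts maximality. To make "maximal" rigorous I use a Zorn argument on solutions ordered by extension.

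\textbf{Main obstacle.} I expect the closed-graph argument at the interface in Step~1 to be the delicate point. It requires that limits of sequences approaching $\iface(t)$ from one phase are controlled by the one-sided traces uniformly in time, which relies on the tubular-neighborhood regularity of $\mathcal M$ from \cite{Bo-2PH-ODE}. Everything else is a direct appeal to \cite{MDE}.
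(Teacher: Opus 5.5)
Your proposal is correct and follows essentially the paper's route. You verify upper semicontinuity of $\hat{\DBv}$ via closed graph and local boundedness, and check the contingent-cone condition on $\partial\Omega$ from impermeability. You then apply Theorem~5.2 of \cite{MDE} forward and to the time-reversed inclusion backward, and use the linear growth bound with Gronwall for continuation to all of $J$.

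One remark on what you flag as the main obstacle: the closed-graph step at the interface needs only the assumed joint continuity of $\DBv^\pm$ on $\gr(\overline{\Omega^\pm})$, together with the fact that the limit point $(t,\DBx)$ with $\DBx\in\iface(t)$ lies in both graphs. The tubular-neighborhood regularity from \cite{Bo-2PH-ODE} is not required there.
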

Indeed, the contingent-cone condition required in \cite[Theorem~5.2]{MDE} is automatic in the interior of $\bar\Omega$. At a point of the $C^1$-boundary $\partial\Omega$, every value of $\hat{\DBv}$ is the corresponding one-sided phase trace and satisfies $\DBv\cdot\DBn_\Omega=0$, hence belongs to the contingent cone of $\bar\Omega$. This gives forward local existence. Applying the same theorem to the time-reversed multifunction $-\hat{\DBv}(-t,\DBx)$ gives backward local existence, and the linear growth bound yields continuation on all of $J$.
The preceding section has shown that we cannot expect unique solvability of the two-phase kinematic differential inclusion in case of slip at the interface. However, due to Lemma~\ref{DBlem:kinematic-DI-existence} there is a well-defined--albeit multivalued--flow map associated to (\ref{DBeq:kinematic-DI-IVP}), which also gives rise to well-defined co-moving volumes.

\begin{definition}[Two-phase flow map and co-moving sets]\label{DBdef:two-phase-flow-map}
Assume the two-phase kinematic setting of Assumption~\ref{DBass:two-phase-kinematic-setting} and the linear growth bound \eqref{DBeq:linear-growth-velocity}.
We then define the \emph{two-phase flow map} $\hat{\Phi}_{t_0}^t :\bar{\Omega} \to 2^{\bar{\Omega}}\setminus \{ \emptyset \}$, associated to the two-phase velocity field, via
\begin{equation}\label{DBeq:two-phase-flow-map}
\hat{\Phi}_{t_0}^t (\DBx_0) =\{ \DBx (t;t_0,\DBx_0) : \DBx (\cdot;t_0,\DBx_0) \mbox{ is a global strong solution on $J$ of } (\ref{DBeq:kinematic-DI-IVP}) \}.
\end{equation}
The set $\hat{\Phi}_{t_0}^t (\DBx_0)$ is also called \emph{attainable} or \emph{reachable set} of the
underlying initial value problem.
If $t_0$ is understood from the context, we also use the shorter notation $\hat{\Phi}^t (\DBx_0)$.
For a given $t_0\in J$ and $G_0\subset \bar{\Omega}$, we define the \emph{co-moving set} $G(t)$, emanating from the set $G_0$  at time $t_0$, as\vspace{-0.05in}
\begin{equation}\label{DBeq:two-phase-co-moving-set}
G(t) = \hat{\Phi}_{t_0}^t (G_0) \mbox{ for } t\in J.
\end{equation}
\end{definition}
\noindent
Both objects are well-defined due to Lemma~\ref{DBlem:kinematic-DI-existence}, also globally as we impose
the growth condition (\ref{DBeq:linear-growth-velocity}).
Note that the flow map is also defined for $t<t_0$, but $\hat{\Phi}_{t}^{t_0}$ is not the inverse of $\hat{\Phi}_{t_0}^t$. This can happen because of the non-uniqueness of solutions: in the simplest (artificial) case of $\hat{\DBv} (t,\DBx)\equiv \bar{B}_1 (0)$, say, it holds that $\hat{\Phi}_{t_0}^t (\DBx_0)=\bar{B}_{|t-t_0|} (\DBx_0)$ for every $t\in \R$. Then, $\hat{\Phi}_{t}^{t_0} (\hat{\Phi}_{t_0}^t (\DBx_0))=\bar{B}_{2|t-t_0|} (\DBx_0)$. What holds true instead is
$\DBx_0 \in \hat{\Phi}_{t}^{t_0} (\hat{\Phi}_{t_0}^t (\DBx_0))$.
We collect this and further useful properties of the two-phase flow map, which all follow from the theory of differential inclusions; cf.\ Chapter~7 in \cite{MDE}.
Recall that the Hausdorff metric $d_H$, defined for (closed) bounded subsets of a metric space (in our case the Euclidean space $\R^n$),
is given by\vspace{-0.05in}
\begin{equation}\label{eq_Hausdorff-metric}
d_H (A,B) = \max \{ \sup_{\DBx \in A} d (\DBx ,B), \sup_{\DBx \in B} d (\DBx ,A)\}
\end{equation}
for compact $A,B\subset \R^n$ with $d(\DBx,M)$ denoting the distance from the point $\DBx$ to a set $M$.
\begin{proposition}\label{DBprop:two-phase-flow-map-properties}
Impose the Assumption~\ref{DBass:two-phase-kinematic-setting}, including the linear growth bound \eqref{DBeq:linear-growth-velocity}. Then $\hat{\Phi}_s^t :\bar{\Omega} \to 2^{\bar{\Omega}}\setminus \{ \emptyset \}$,
the associated two-phase flow map, has the following properties:
\begin{enumerate}
  \item[(a)]
  $\hat{\Phi}_s^t (\DBx) \neq \emptyset$ is compact for all $s,t\in J$ and $\DBx\in \bar{\Omega}$;\vspace{2pt}
  \item[(b)]
  $\hat{\Phi}_s^s (\DBx) = \{ \DBx \}$ for all $s\in J$ and $\DBx\in \bar{\Omega}$;\vspace{2pt}
  \item[(c)]
  $\hat{\Phi}_s^t (\DBx) = \hat{\Phi}_\tau^t \big( \hat{\Phi}_s^\tau (\DBx) \big)$ for all $s,\tau,t\in J$ with $s\leq \tau \leq t$ and all $\DBx\in \bar{\Omega}$;\vspace{2pt}
  \item[(d)]
  $\DBx \in \hat{\Phi}_{t}^{s} (\hat{\Phi}_{s}^t (\DBx))$ for all $s,t\in J$ and $\DBx\in \bar{\Omega}$;\vspace{2pt}
  \item[(e)]
  $t\to \hat{\Phi}_s^t (\DBx)$ is locally Lipschitz with respect to $d_H$ for every $s\in J$, $\DBx\in \bar{\Omega}$;\vspace{2pt}
  \item[(f)]
  $(s,\DBx)\to \hat{\Phi}_s^t (\DBx)$ is usc for every $t\in J$.\vspace{2pt}
\end{enumerate}
As a consequence, as a function of $t$, the co-moving volume $G(t)$, emanating from some given compact set $G_0 \subset \overline{\Omega}$, is locally Lipschitz continuous with respect to the Hausdorff metric $d_H$.
Indeed, for every compact $G_0 \subset \overline{\Omega}$ and every compact interval $I \subset J$, the linear growth bound \eqref{DBeq:linear-growth-velocity} and Gronwall's lemma yield a uniform bound on $|\dot x(t)|$ for all strong solutions with $x(t_0) \in G_0$, and hence
\[
d_H(G(t),G(s)) \le C_I |t-s| \qquad \text{for } s,t \in I.
\]
\end{proposition}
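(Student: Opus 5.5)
The plan is to obtain all properties from standard facts about attainable sets of differential inclusions with usc, compact, convex, linearly bounded right-hand sides. These facts are collected in Chapter~7 of \cite{MDE}; here I only verify that $\hat{\DBv}$ meets the hypotheses and indicate each argument. The first step is to record that $\hat{\DBv}$ has nonempty compact convex values, a closed graph on compact subsets of $J\times\overline\Omega\times\R^n$ (as noted after \eqref{DBdef:setvalued-regularization}), and the linear growth bound \eqref{DBeq:linear-growth-velocity}. Gronwall's lemma then gives, for every compact $I\subset J$ and bounded initial set, a uniform bound $\|\DBx(t)\|\le R_I$ and hence $\|\dot\DBx(t)\|\le c(1+R_I)=:C_I$ for all strong solutions. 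I will call this the \emph{a priori estimate}.

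With the a priori estimate in hand, (b), (c) and (d) are elementary. Property (b) holds because every solution with $\DBx(s)=\DBx$ starts at $\DBx$. For (c), the inclusion ``$\subset$'' follows by restricting a solution to $[\tau,t]$. For ``$\supset$'', I concatenate a solution on $[s,\tau]$ ending at $\DBy$ with a solution on $[\tau,t]$ starting at $\DBy$, and extend it to all of $J$ via Lemma~\ref{DBlem:kinematic-DI-existence}; absolute continuity and the a.e.\ selection property survive concatenation. For (d), I take $\DBy=\DBx(t)$ for a global solution $\DBx(\cdot)$ with $\DBx(s)=\DBx$. The same curve $\DBx(\cdot)$ is then a global solution through $(t,\DBy)$, so $\DBx=\DBx(s)\in\hat\Phi_t^s(\DBy)$.

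Properties (a) and (f) are where the compactness argument is needed, and this is the main step. I use a Filippov-type convergence theorem. Let $\DBx_k$ be solutions with initial data $(s_k,\DBz_k)\to(s,\DBz)$. They are uniformly bounded and uniformly Lipschitz on compact subintervals by the a priori estimate, so Arzel\`a--Ascoli yields a subsequence converging locally uniformly. The derivatives converge weakly-$*$ in $L^\infty$. Closedness of the graph together with convexity of the values, via Mazur's lemma, shows that the limit is again a strong solution. A diagonal argument over compact subintervals exhausting $J$ gives a global solution.

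Applied with fixed initial data, this shows that $\hat\Phi_s^t(\DBx)$ is closed; since it is bounded, it is compact, and it is nonempty by Lemma~\ref{DBlem:kinematic-DI-existence}, which proves (a). For (f), suppose $(s_k,\DBz_k)\to(s,\DBz)$ and $\DBy_k=\DBx_k(t)\in\hat\Phi_{s_k}^t(\DBz_k)\cap A$ with $A$ closed. A subsequence of the solutions $\DBx_k$ converges to a solution $\DBx$ with $\DBx(s)=\DBz$. Here I use equicontinuity to pass from $\DBx_k(s_k)=\DBz_k$ to $\DBx(s)=\DBz$. Then $\DBx(t)=\lim\DBy_k\in A$, so the set $\{(s,\DBz):\hat\Phi_s^t(\DBz)\cap A\neq\emptyset\}$ is closed, which is upper semicontinuity. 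The delicate point is checking the limiting selection property on the interface, where $\hat{\DBv}$ is set-valued; this is exactly where the closed graph of $\hat{\DBv}$ (continuity of the one-sided traces and the $C^1$ motion of $\iface$) and the convexity of its values are used.

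Finally, (e) and the Hausdorff-Lipschitz bound for $G(t)$ follow from the a priori estimate alone. Fix $s',t'\in I$ and a point $\DBx(t')\in G(t')$ coming from a global solution $\DBx(\cdot)$ with $\DBx(t_0)\in G_0$. Then $\DBx(s')\in G(s')$ and $|\DBx(t')-\DBx(s')|\le C_I|t'-s'|$. Hence $\sup_{\DBy\in G(t')}d(\DBy,G(s'))\le C_I|t'-s'|$, and the symmetric bound holds by the same argument, so $d_H(G(t'),G(s'))\le C_I|t'-s'|$. Taking $G_0=\{\DBx\}$ gives (e).
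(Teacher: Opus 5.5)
Your proposal is correct and takes essentially the paper's route: the paper does not prove (a)--(f) individually but defers them to the standard attainable-set theory for usc, compact convex, linearly bounded inclusions in Chapter~7 of \cite{MDE}, and it obtains the Hausdorff--Lipschitz bound for $G(t)$ from the same Gronwall speed estimate you use. Your Arzel\`a--Ascoli/Mazur convergence argument for (a) and (f), the concatenation argument for (c), and the reuse of a single global solution for (d) simply spell out those cited facts.
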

Let us also record the corresponding local connectedness statement. Fix $s\in J$ and a compact set $K\Subset\Omega$.
Choose a compact neighborhood $K_1$ with $K\subset K_1^\circ\Subset\Omega$. On a sufficiently short time interval about $s$, the multifunction $\hat{\DBv}$ is uniformly bounded on $K_1$. After shortening this interval once more, the speed bound implies that every solution branch starting in $K$ remains in $K_1$. Choose a spatial cut-off which is one on $K_1$ and has compact support in $\Omega$. Multiplying $\hat{\DBv}$ by this cut-off and extending by zero outside $\Omega$ gives an upper semicontinuous multifunction on $\R^n$ with compact convex values. By the same speed estimate, all branches of the extended inclusion starting in $K$ remain in $K_1$ on the chosen interval, so the constrained and extended solution sets agree there. The solution funnel of the resulting unconstrained differential inclusion in $\R^n$ is connected; cf.\ \cite[Cor.~7.2]{MDE}. Hence, for such $s,t$ and $\DBx\in K$, the attainable sets $\hat{\Phi}_s^t(\DBx)$ are compact and connected.

Furthermore, Proposition~\ref{DBprop:two-phase-flow-map-properties} also implies that $\hat\Phi_s^t(K)$ is compact whenever $K\subset\overline\Omega$ is compact. This yields Borel measurability of the sets $\hat\Phi_s^t(G_0)$ for compact $G_0$. 
If one starts from the corresponding open domain $G_0^\circ$ with compact closure, then $G_0^\circ$ may be exhausted by compact sets $K_j\Subset G_0^\circ$, and\vspace{-0.15in}
\[
        \hat\Phi_s^t(G_0^\circ)=\bigcup_{j=1}^\infty \hat\Phi_s^t(K_j),\vspace{-0.05in}
\]
so that Borel measurability follows again.
\vskip1mm
\begin{figure}
\includegraphics[width=14cm]{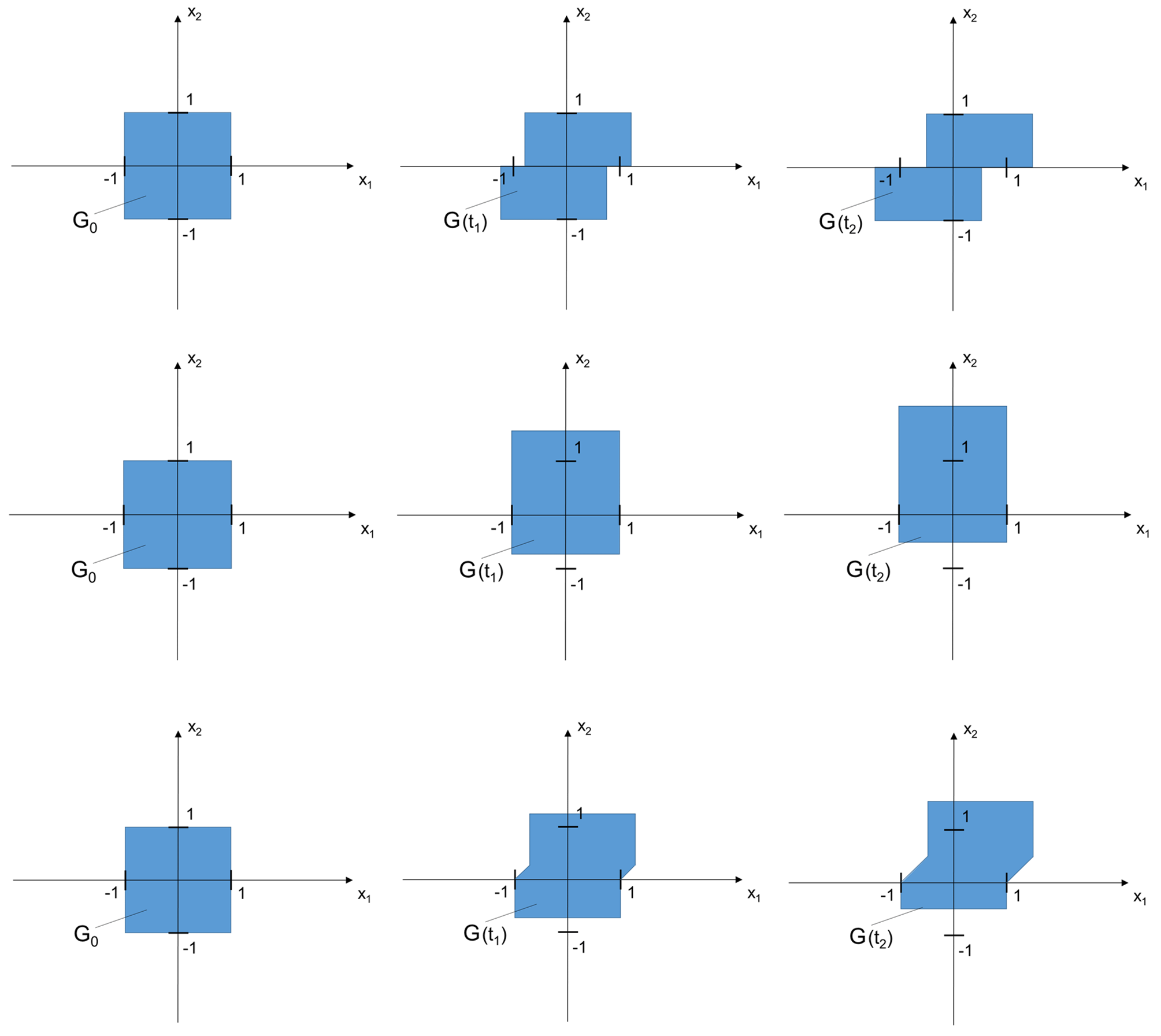}\vspace{-0.1in}
\caption{Simple two-phase co-moving volumes. Time proceeds from left to right. Top row:
zero normal velocity with interfacial slip; middle row: phase change without slip; bottom row: phase change with slip.}\label{DBFig_two-phase-comoving}
\end{figure}
\noindent
We are now aiming for a transport theorem for two-phase co-moving volumes, a two-phase RTT.
To see what can be expected in the general case, i.e.\ without unique solvability and, hence, for multivalued flow maps, let us start with some simple, but nevertheless prototypical examples.
\begin{example}\label{DBex:co-moving-tthm-example}
For simpler visualisation, we consider the two-dimensional case, where we let $\Omega=\R^2$
and $\iface \equiv \R \times \{0\}$.
In all cases, we let $\Omega^\pm \equiv \{\DBx \in \R^2: x_2 \gtrless 0\}$.
For the velocity fields, we first consider three simple cases:\\[1ex]
\indent
Case a):  $\DBv^- \equiv (-1,0)^{\sf T}$ and  $\;\DBv^+ \equiv (1,0)^{\sf T}$;\\[1ex]
\indent
Case b):  $\DBv^- \equiv (0,1)^{\sf T}\;\;\hspace{1pt}$ and  $\;\DBv^+ \equiv (0,2)^{\sf T}$;\\[1ex]
\indent
Case c):  $\DBv^- \equiv (0,1)^{\sf T}\;\;\,$ and  $\;\DBv^+ \equiv (1,1)^{\sf T}$.\\[1ex]
\noindent
In case a), there is no phase change, but slip at the interface; case b) is with phase change, but without slip; case c) is with phase change and also with slip at the interface.
We let $t_0=0$ and choose $G_0 = [-1,1]\times [-1,1]$ for easy calculation of the co-moving sets; similar pictures can be obtained for smooth initial sets such as the unit disc.
Figure~\ref{DBFig_two-phase-comoving} shows the corresponding co-moving sets at time instances $t=0$, $t_1=0.25$, and $t_2=0.5$.
Cases a) and c) illustrate that slip at the interface can create new edges at the boundary of the co-moving volume. In case a), even new boundary segments are formed, caused by the multivaluedness of
the velocities at the interface. Case b), in the absence of slip but with phase change, shows that
the jump in the normal velocity leads to a gain or loss of volume at the interface.
\begin{figure}
\includegraphics[width=5.8in]{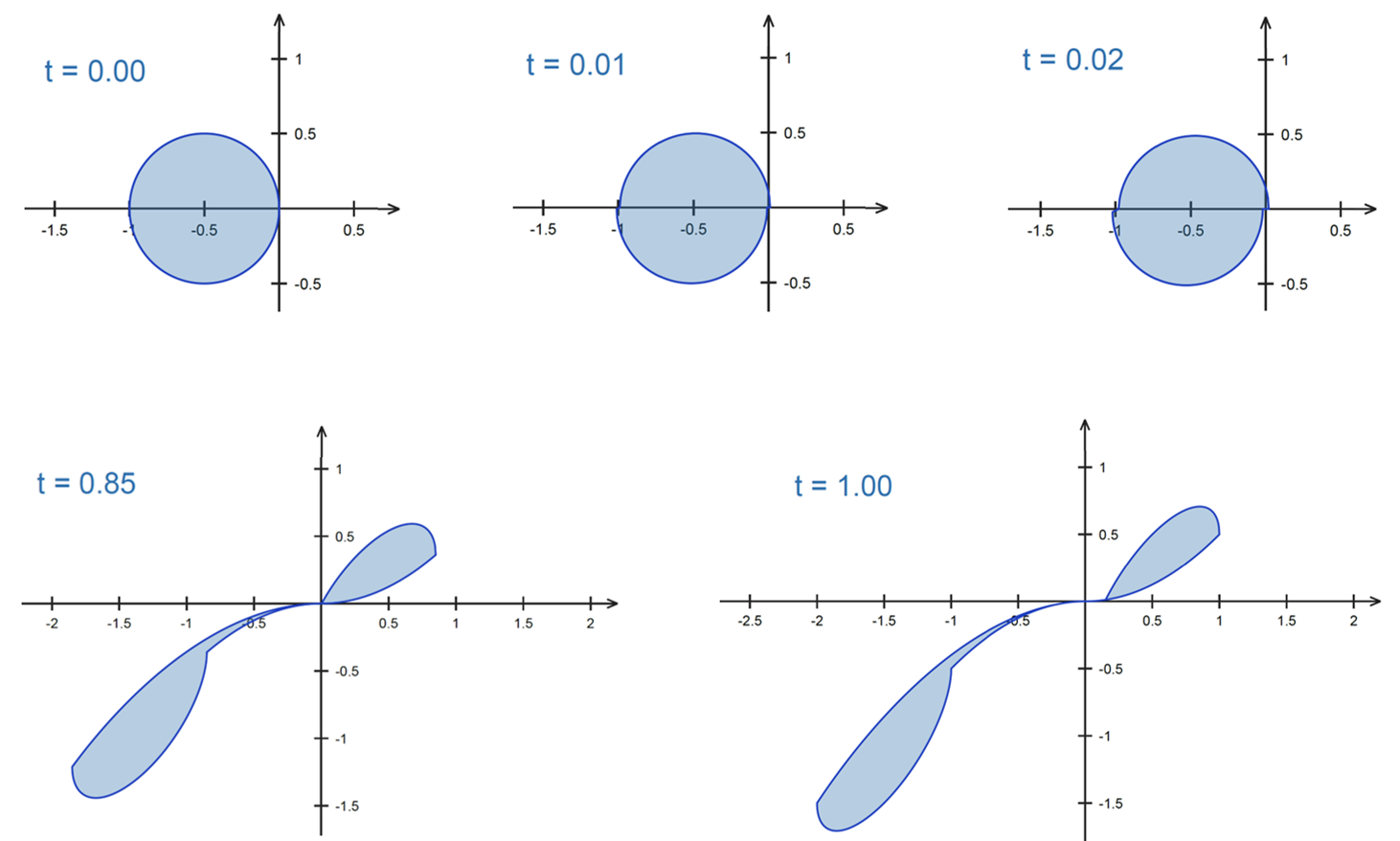}
\caption{Co-moving volumes associated with the velocity field from (\ref{eq-counter-ex}). The initial set is the disk $G(0)=G_0=\{(a,b)\in\R^2:(a+\tfrac12)^2+b^2\le\tfrac14\}$. 
The top row shows the kinematically transported set at the early times $t=0$, $t=0.01$, and $t=0.02$. The formation of four vertices is a signature of interfacial slippage. The bottom row shows the development of a cusp at $t \approx 0.85$; see Appendix~\ref{app:attainable-sets-cusp} for a proof. Hence the co-moving set ceases to be a Lipschitz domain after finite time. Subsequently, the attainable set remains connected only through a one-dimensional arc, while its interior has two connected components.}\label{DBFig_two-phase-comoving-nonunique}
\end{figure}
\indent
Case d): A somewhat more involved case is associated with the velocity field as defined above in equation (\ref{eq-counter-ex}).
Due to the non-uniqueness of the corresponding initial value problems for $\DBx_0=(0,0)$, the initial set $G_0:=\{(a,b)\in\R^2:(a+\tfrac12)^2+b^2\le\tfrac14\}$ evolves in such a manner that (i) the boundary $\partial G(t)$ of the co-moving sets $G(t) = \hat{\Phi}_{0}^t (G_0)$ instantaneously loses the $C^2$-regularity due to the interfacial slippage; (ii) from the interfacial boundary point $(0,0)$ an arc of admissible points emanates, forming an additional part of the boundary $\partial G(t)$ for $t >0$; (iii) the reduced Lipschitz regularity of $\partial G(t)$
drops further at $t=t_* \approx 0.85$ when a cusp is formed; (iv) the interior of $G(t)$ undergoes a topological change at $t=t_*$, where it changes from a connected set into a union of two connected components.
Figure~\ref{DBFig_two-phase-comoving-nonunique} displays the sets $G(t)$ for several time instances at which these changes occur;
see also Figure ~\ref{DBFig_lower-part-cusp}.
More details and concrete calculations are provided in Appendix~\ref{app:attainable-sets-cusp}.
Note that this example requires the generalisation of co-moving sets using set-valued theory as described above.

For case d), the discussion in Section~\ref{sec:kinematics} shows both that the elementary field \eqref{eq-counter-ex} admits local realizations satisfying these physical conditions and that a bounded localization retaining the local kinematics of the closely related field \eqref{eq:localized-traces}, with the same interface traces, can be realized as a forced incompressible Navier--Stokes flow with constant viscosity and surface tension. Thus the non-uniqueness and the loss of regularity are not artifacts of violating these basic physical conditions.
\end{example}
\section{Two-Phase Transport Theorem for Co-Moving Volumes}\label{sec:two-phase-RTT}
Especially case d) of Example~\ref{DBex:co-moving-tthm-example} indicates that a transport theorem for co-moving volumes in two-phase flows must be formulated with some care: even starting from a smooth control volume, the boundary of the transported set can lose regularity immediately. For the continuum-mechanical applications intended here, it is enough to evaluate the transport identity at the initial time $t_0$, for an initially regular control volume $G_0$. 
We assume below that \(G_0\) is the closure of a bounded connected
\(C^2\)-domain whose boundary cuts the interface transversally; this is
precisely the geometric condition used to control the interfacial strip in the
proof. The theorem is stated for \(n=3\), the physical case. An analogous higher-dimensional
formulation would require the corresponding dimensional modifications in the
geometric estimates.

The primary result is stated in boundary-integral form. This version only involves $\partial_t\phi$, the one-sided traces of $\phi$ and the normal velocities of the relevant boundary pieces. In particular, it does not require spatial differentiability of $\phi$ or a divergence of the one-sided velocity fields. The divergence form is then obtained as a separate corollary, under the additional assumptions needed to apply the phasewise divergence theorem.

\begin{theorem}[Boundary form of the two-phase transport theorem]\label{DBthm:2ph-transport-theorem-comovingCV}
Assume the two-phase kinematic setting of Assumption~\ref{DBass:two-phase-kinematic-setting}, including the linear growth bound \eqref{DBeq:linear-growth-velocity}, with $n=3$.
Let $t_0\in J$ and let  \(G_0\subset\Omega\) be the closure of a bounded connected \(C^2\)-domain,
with \(\DBn\) denoting its outer unit normal.
We assume that $\partial G_0$ cuts the initial interface transversally, i.e.\ $\DBn$ and $\DBn_\Sigma$ are not colinear on $\partial G_0\cap\iface_0$, where $\iface_0=\iface(t_0)$.
Let $G(t)=\hat\Phi_{t_0}^t(G_0)$ be the co-moving volume emanating from $G_0$ at $t_0$.
Let $\phi^\pm\in C(\gr(\overline{\Omega^\pm}))$ be bounded, and let $\phi$ denote the corresponding phasewise field on $J\times\Omega\setminus\gr(\iface)$; its values on the interface are immaterial. Assume that the phasewise time derivatives $\partial_t\phi^\pm$ exist in $\gr(\Omega^\pm)$ and extend continuously to $\gr(\overline{\Omega^\pm})$.
Then the two-sided derivative exists and
\begin{equation}\label{DBeq:boundary-transport-theorem-comoving}
\Big[\frac{d}{dt}\int_{G(t)}\phi\,d\DBx\Big]_{|t=t_0}
=\int_{G_0\setminus\iface_0}\partial_t\phi\,d\DBx
 +\int_{\partial G_0\setminus\iface_0}\phi\,\DBv\cdot\DBn\,dS
 -\int_{G_0\cap\iface_0}[\![\phi]\!]\,V_\Sigma\,dS .
\end{equation}
Here $V_\Sigma$ is the speed of normal displacement of the interface. All integrands on the right-hand side are evaluated at time $t_0$.
\end{theorem}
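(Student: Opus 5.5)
I plan to split $G(t)$ phasewise, $G^\pm(t):=G(t)\cap\overline{\Omega^\pm(t)}$, so that $\int_{G(t)}\phi\,d\DBx=\int_{G^+(t)}\phi^+\,d\DBx+\int_{G^-(t)}\phi^-\,d\DBx$ (the interface is a null set). I will then compare each $G^\pm(t)$, for $|t-t_0|$ small, with explicit comparison sets built from two pieces. The first piece is the single-valued phasewise flow $\Phi^\pm$ of an extension of $\DBv^\pm$ across $\iface$, which is Lipschitz in space and continuous in time. The second piece is the phase region $\Omega^\pm(t)$, whose boundary moves with speed $V_\Sigma$. Away from the interface the Krasovskii regularization $\hat\DBv$ is single-valued and locally Lipschitz, so solutions starting in $G_0$ at positive distance from $\iface_0$ are unique up to the first hitting time of $\iface$. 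The key geometric claim is the following: there is a time-dependent strip $S(t)$ around $\partial G_0\cap\iface_0$ with $\mathcal H^3$-measure $o(|t-t_0|)$, such that outside $S(t)$ the set $G^\pm(t)$ coincides with $\Phi^\pm_{t_0}{}^{t}(G_0)\cap\overline{\Omega^\pm(t)}$.

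The justification runs through the speed bound. By the Hausdorff-Lipschitz property in Proposition~\ref{DBprop:two-phase-flow-map-properties}, all branches starting in $G_0$ move by at most $C|t-t_0|$. Consider a branch that reaches $\iface$ at a point $\DBy$ with nonzero relative normal velocity. By the transversality \eqref{eq-transversality} it crosses, and the set of points reached this way differs from the one-phase pushed set only in an $O(|t-t_0|^2)$-thick layer near $\iface$. Multivaluedness occurs only at tangential touching points, i.e.\ only via branches moving along $\iface$. Such branches stay within $C|t-t_0|$ of the interface, so their contribution is confined to an $O(|t-t_0|)$-neighbourhood of $\iface\cap\partial G_0$. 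Here the transversality assumption on $\partial G_0$ is essential. Since $\partial G_0\cap\iface_0$ is a compact $C^1$ curve (transversal intersection of $C^2$ surfaces in $\R^3$), that neighbourhood has volume $O(|t-t_0|^2)$. Everywhere else in the interfacial disc $G_0^\circ\cap\iface_0$, attainable points generated by interfacial sliding lie in the interior of the region already covered, so they add no volume.

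With this reduction I will apply the single-phase boundary RTT, Theorem~\ref{thm:single-phase-boundary-rtt}, together with Corollary~\ref{cor:flow-map-rtt}, to the comparison tubes $\{(t,\DBx):\DBx\in\Phi^\pm_{t_0}{}^{t}(G_0)\cap\overline{\Omega^\pm(t)}\}$. Their lateral boundary consists of two parts. The first is the transported $\partial G_0$ part, which has $V_n=\DBv^\pm\cdot\DBn$. The second is the interface part, which has $V_n=\mp V_\Sigma$ relative to $G^\pm$, the sign depending on the orientation of $\DBn_\Sigma$. Near $t_0$ this tube is the intersection of two transversally meeting $C^1$ tubes, hence Lipschitz with the correct time-slice decomposition by the Appendix results on Lipschitz domains. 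Continuity of $\phi^\pm$ and of $\partial_t\phi^\pm$ up to the phase closures lets the right-hand side be continuous in $t$, which gives the derivative at $t_0$ itself rather than only almost everywhere. Adding the two phases, the interface contributions combine to $-\int_{G_0\cap\iface_0}[\![\phi]\!]V_\Sigma\,dS$, and the boundary contributions combine to $\int_{\partial G_0\setminus\iface_0}\phi\,\DBv\cdot\DBn\,dS$. The strip error, bounded by $\|\phi\|_\infty\,o(|t-t_0|)$, then vanishes in the difference quotient, from both sides in time.

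I expect the main obstacle to be the strip estimate. It requires proving that the symmetric difference between $G^\pm(t)$ and the comparison set has measure $o(|t-t_0|)$, which means controlling attainable sets of the differential inclusion near the contact curve $\partial G_0\cap\iface_0$, where sliding branches can exit $G_0$ as in Example~\ref{DBex:co-moving-tthm-example}. I would first try an inclusion sandwich: inner and outer comparison sets obtained by shrinking or enlarging $G_0$ by $Ct|t-t_0|$-type normal offsets. The aim is to show that $G(t)$ lies between them outside a tube of radius $C|t-t_0|$ around the contact curve, using uniqueness away from $\iface$ and the crossing property \eqref{eq-transversality} on $\iface$.
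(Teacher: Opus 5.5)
Your architecture matches the paper's: phasewise comparison sets with Lipschitz space-time tubes, Theorem~\ref{thm:single-phase-boundary-rtt} applied to them, and an $o(|t-t_0|)$ volume error. Your comparison sets $X_t^\pm(G_0)\cap\overline{\Omega^\pm(t)}$ (writing $X^\pm$ for your one-sided flows $\Phi^\pm$) would be a viable and somewhat leaner alternative to the paper's cut-and-fill sets $A_s^\pm\cup C_s^\pm$. They need no normal fills, no extra lateral faces $L_s^\pm$, and they do not degenerate at $t_0$. Your key geometric claim is in fact true.

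The gap is in how you justify it, which is exactly the step you flag as the main obstacle.

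\emph{First}, you rely on the crossing property \eqref{eq-transversality}. That property comes from the mass jump condition \eqref{E3} with $\rho^\pm>0$ and is \emph{not} among the hypotheses. Assumption~\ref{DBass:two-phase-kinematic-setting} allows the two one-sided relative normal velocities to have opposite signs. A branch arriving transversally may then slide along $\iface$ for the remaining time, and interface points may emit branches into both phases. So ``cross transversally or touch tangentially'' does not exhaust the cases. Non-uniqueness is also not confined to tangential contacts, since $\hat{\DBv}$ is multivalued wherever $\DBv^+\neq\DBv^-$ on $\iface$.

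\emph{Second}, even granting \eqref{eq-transversality}, the step ``sliding branches stay within $C|t-t_0|$ of $\iface$, hence their contribution is confined near $\iface\cap\partial G_0$'' does not follow. It only confines them to a strip over the whole disc $G_0\cap\iface$, whose volume is $O(|t-t_0|)$, not $o(|t-t_0|)$. That everything over the interior of the disc is ``already covered'', both by $G^\pm(t)$ \emph{and} by your comparison set, is precisely what must be proved in both directions. Your sketch gives no mechanism for it.

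The missing ingredients are the two mechanisms of the paper's proof, and neither needs a sign condition. Put $t_0=0$, let $d$ be the signed distance to $\iface(s)$, and fix $M>m$, where $m$ bounds $|\partial_t d+\nabla_\DBx d\cdot\DBw|$ for all $\DBw\in\hat{\DBv}$ in a tubular neighbourhood.

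(i) A branch that touches $\iface$ at some time in $[0,s]$ cannot reach $\{|d(s,\cdot)|>Ms\}$ by time $s$. Hence on that set $G(s)$ coincides \emph{exactly} with the one-sided flow images $X_s^\pm(G_0)$. Conversely, an $X^\pm$-trajectory ending there never met $\iface$ and is admissible (Claim~3 of the proof).

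(ii) Inside the strip $|d(s,\cdot)|\le Ms$, the outer bound comes from $G(s)\subset G_0+\overline B_{Ms}(0)$ together with the slice geometry of Claim~1. The inner bound comes from backward solvability of the inclusion: every point at distance $>ms$ from $G_0^\complement$ is attained (Claim~2).

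Let $D_s$ be the disc $G_0\cap\iface_0$ transported along $\iface$ by the normal surface flow, and $D_s^{\pm\rho}$ its intrinsic $\rho$-enlargement and $\rho$-shrinking. The same elementary estimates for the single-valued flows then show three things: $G^\pm(s)$ and your comparison sets both contain the part of the strip over $D_s^{-\omega s}$, both miss the part over $\iface(s)\setminus D_s^{+\omega s}$, and they agree off the strip. So $S(s)$ can be taken as the strip over $D_s^{+\omega s}\setminus D_s^{-\omega s}$, which has volume $O(s^2)$.

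Finally, the comparison tubes are not intersections of $C^1$ tubes, because $\widetilde{\DBv}^{\,\pm}$ is only Lipschitz in space. Their Lipschitz regularity requires the small-perturbation theorem with time rescaling (Theorem~\ref{DBthm:small-biLip-perturbation-cone}, as in Corollary~\ref{cor:flow-map-rtt}) and a transversal gluing at the moving contact curve (Lemma~\ref{lem:uniform-finite-face-gluing}). The limit $s\to0$ of the integrals over $X_s^\pm(\partial G_0)\cap\overline{\Omega^\pm(s)}$ also needs the Lipschitz area formula of Proposition~\ref{prop:surface-area-formula-lipschitz}, not just continuity of $\phi^\pm$.
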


%
\noindent
Before proving the theorem, let us explain why one cannot simply apply the single-phase RTT to the two bulk parts of $G(t)$, separately.
A closer look at the example of case d) above shows that this strategy is not admissible within
the class of Lipschitz domains: there, the part of the co-moving set lying in the bulk phase $\{x_2 < 0\}$
instantaneously develops a cusp. This is illustrated in Figure ~\ref{DBFig_lower-part-cusp}, a proof is contained in Theorem~\ref{thm:geometry}.
\begin{figure}
\includegraphics[width=11cm]{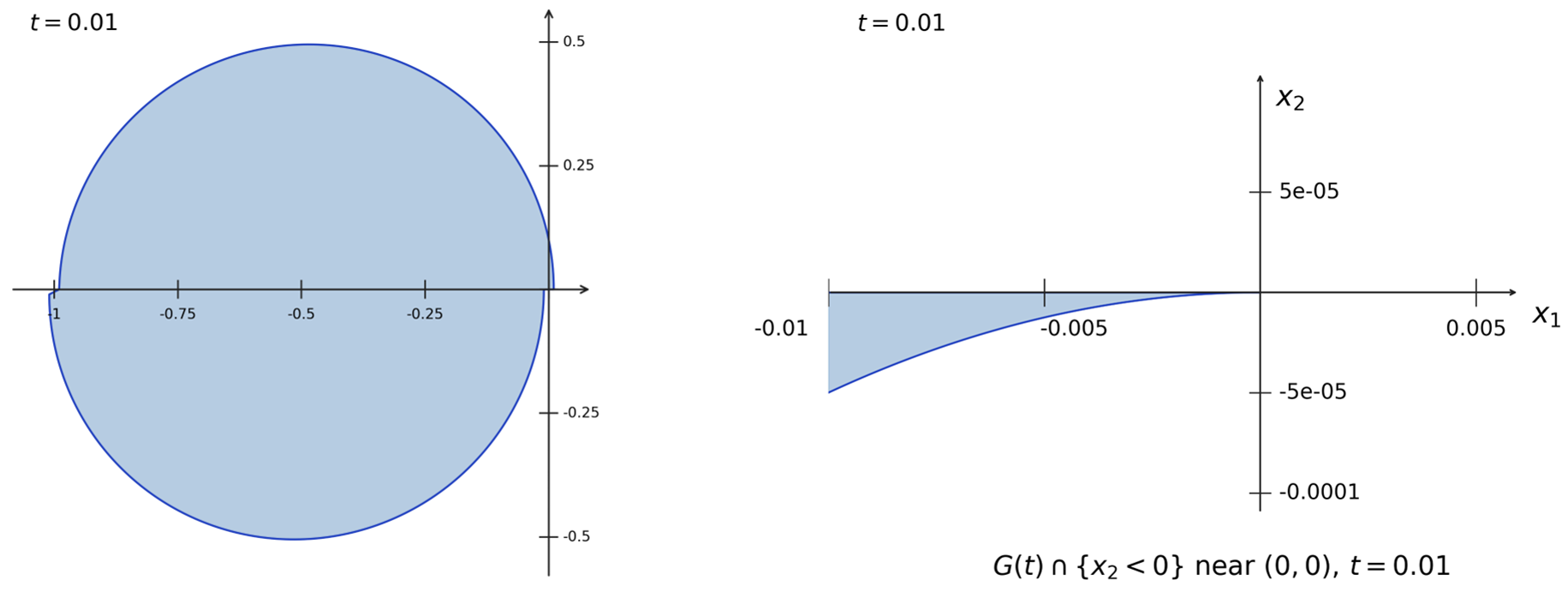}\vspace{-0.1in}
\caption{
Snapshot at time \(t=0.01\) of the co-moving set (left), and a magnified view of
the lower-phase part near the origin, showing the cusp of
\(G(t)\cap\{x_2<0\}\).}\label{DBFig_lower-part-cusp}
\end{figure}

Before entering the proof of Theorem~\ref{DBthm:2ph-transport-theorem-comovingCV}, let us briefly explain the underlying geometric idea. The only region where the boundary of the co-moving set may be difficult to control is an interfacial strip: points ending very close to $\iface(t)$ may have reached the interface and may therefore have been transported by the multivalued part of the velocity field. Away from this strip, however, the motion is governed by the single-valued one-sided velocity fields, and the usual boundary form of the transport theorem can be applied. For intuition, one may imagine first straightening and freezing the interface by a local change of variables, so that $\iface(t)$ becomes the fixed plane $\{x_3=0\}$.
Also let $t_0 =0$. Then, choosing $M$ larger than the local velocity bound, no trajectory which has touched the interface can reach the region $|x_3|>Mt$ at time $t$. Hence one cuts away the strip $|x_3|\le Mt$. The removed parts on the positive and negative signed-distance sides are then replaced by extrusions from the artificial cut surfaces back to the interface; cf.\ Figure~\ref{DBFig_cusp-removal} for an illustration.
These modified sets have Lipschitz space-time boundaries and are close to the true co-moving set up to an error of order $t^2$. Therefore this modification does not affect the derivative at the initial time, while it permits a direct (phasewise) application of the boundary-integral Reynolds transport theorem for the single-phase case. In the proof below the same construction is carried out intrinsically, using signed distance to the moving interface instead of actually performing such a flattening transformation.
\begin{proof}
We first prove equation (\ref{DBeq:boundary-transport-theorem-comoving}) for the right derivative. Both sides of
\eqref{DBeq:boundary-transport-theorem-comoving} are invariant under replacing
$\DBn_\Sigma$ by $-\DBn_\Sigma$: then both $V_\Sigma$ and the jump bracket
$[\![\phi]\!]$ change sign. The orientation may therefore be chosen so that
$\DBn_\Sigma$ points from $\Omega^-(t)$ into $\Omega^+(t)$. After translating time, we may assume $t_0=0$.

\vskip2mm
\emph{Preparatory reductions.}
Put
\[
        \iface_0:=\iface(0),\qquad \Omega_0^\pm:=\Omega^\pm(0),
        \qquad G(s):=\hat\Phi_0^s(G_0),
        \qquad \Gamma:=\partial G_0\cap\iface_0.\vspace{-0.05in}
\]
Let\vspace{-0.05in}
\[
        D_0:=G_0\cap\iface_0.
\]
Since \(\partial G_0\) is compact and \(C^2\), we may choose a tubular
neighborhood \(U_q\) of \(\partial G_0\) on which the signed distance to
\(\partial G_0\) is \(C^2\). We write this signed distance as
\[
        q:U_q\to\R,
\]
with the sign convention \(q<0\) in \(U_q\cap G_0^\circ\) and
\(q>0\) in \(U_q\setminus G_0\). Thus
\[
        q=0 \quad\text{on }\partial G_0,\qquad
        |\nabla q|=1 \quad\text{in }U_q,\qquad
        \nabla q=\DBn \quad\text{on }\partial G_0 .
\]
By the transversality assumption, \(\DBn\) and \(\DBn_\Sigma\) are not
colinear on \(\Gamma=\partial G_0\cap\iface_0\). Equivalently,
\[
        \nabla_{\iface_0}q\ne0
        \qquad\text{on }\Gamma .
\]
Hence \(0\) is a regular value of \(q|_{\iface_0}\) near \(\Gamma\), and
\(\Gamma\) is a compact \(C^2\) curve in \(\iface_0\), with only finitely many
connected components. Moreover,
\[
        \partial_{\iface_0}D_0=\Gamma .
\]
Indeed, every point of \(\iface_0\setminus\Gamma\) lies either in
\(G_0^\circ\cap\iface_0\), hence is a relative interior point of \(D_0\) in
\(\iface_0\), or in \(\iface_0\setminus G_0\), hence is a relative exterior
point.

We decompose \(D_0\) into its connected components in the relative topology of
\(\iface_0\). The components whose relative boundary is nonempty meet
\(\Gamma\), and there are only finitely many of them because \(\Gamma\) has only
finitely many connected components. The remaining components have empty relative
boundary in \(\iface_0\); they are therefore connected components of
\(\iface_0\) contained in \(G_0^\circ\). 
Since \(\iface_0\) is a closed embedded hypersurface, its connected components are
locally finite in \(\Omega\), i.e.\ every compact subset of \(\Omega\) intersects only
finitely many of them. As \(G_0\) is compact, only finitely many components of
\(\iface_0\) can be contained in \(G_0^\circ\).
Consequently, \(D_0\) has only finitely many connected components.

{
Let $D_{\mathrm{int}}$ be the union of the connected components of $D_0$ that
are contained in $G_0^\circ$, and write these components as
$C_1,\ldots,C_N$. By compactness, local finiteness of the components of
$\iface_0$, and their positive distance from $\partial G_0$, one may choose
pairwise disjoint bounded open sets $V_j$ with smooth boundary such that
\[
 C_j\subset V_j\Subset G_0^\circ,
 \qquad \overline V_j\cap\iface_0=C_j,
 \qquad \partial V_j\cap\iface(s)=\emptyset
 \quad (|s|<T)
\]
after decreasing $T>0$. Choose open sets $W_j$ with
$C_j\subset W_j\Subset V_j$ and functions $\chi_j\in C_c^\infty(V_j)$
such that $\chi_j=1$ on $W_j$. After decreasing $T$, the component of
$\iface(s)$ emanating from $C_j$ remains in $W_j$. Set
$\chi_{\mathrm{int}}:=\sum_{j=1}^N\chi_j$. Shrinking $T$ once more, the
positive distance of $\operatorname{supp}\chi_{\mathrm{int}}$ from
$G_0^\complement$, the local speed bound, and backward solvability of the
differential inclusion imply
\[
        \operatorname{supp}\chi_{\mathrm{int}}\subset G(s)
        \qquad (|s|<T).
\]
Indeed, a backward solution through a point of this support moves by less than
its distance from $G_0^\complement$ and therefore has its time-zero value in
$G_0$. Consequently,
\[
 \int_{G(s)}\chi_{\mathrm{int}}\phi(s,\DBx)\,d\DBx
 =\int_{\bigcup_j V_j}\chi_{\mathrm{int}}\phi(s,\DBx)\,d\DBx
 \qquad (|s|<T).
\]
Applying the phasewise fixed-control-volume formula
\eqref{DBeq:RTT_fixedCV}, equivalently Theorem~\ref{thm:single-phase-boundary-rtt}
on the two smooth phasewise space-time domains, to
$\chi_{\mathrm{int}}\phi$ gives at $s=0$
\[
 \frac{d}{ds}\bigg|_{s=0}
 \int_{G(s)}\chi_{\mathrm{int}}\phi\,d\DBx
 =\int_{G_0\setminus\iface_0}
   \chi_{\mathrm{int}}\partial_t\phi\,d\DBx
  -\int_{D_{\mathrm{int}}}[\![\phi]\!]V_\Sigma\,dS,
\]
because $\chi_{\mathrm{int}}=1$ on $D_{\mathrm{int}}$ and its support
meets no other component of $\iface_0$. The remaining integral has integrand
$(1-\chi_{\mathrm{int}})\phi$, which vanishes in a fixed neighborhood of
all internal components. Since the subsequent construction and all its
estimates are local and linear in the integrand, those components and the
corresponding cut-and-fill pieces may be omitted when the argument is applied
to $(1-\chi_{\mathrm{int}})\phi$. Adding the two localized identities then
recovers the full volume and interface terms. Thus it is enough in the
remaining argument to assume that $D_0$ has no connected component contained
in $G_0^\circ$. If no component with nonempty relative boundary is present,
the preceding localization, together with the ordinary single-phase RTT away
from the interface, already proves the formula. If $G_0$ has positive distance
from the interface, the assertion is only the single-phase RTT for small
times. Hence, after this reduction, we may and do assume that
}
\[
        \Gamma\neq\emptyset,\qquad \partial_{\Sigma_0}D_0=\Gamma,
\]
and \(D_0=G_0\cap\Sigma_0\) has no internal components.

\begin{figure}
\includegraphics[width=13cm]{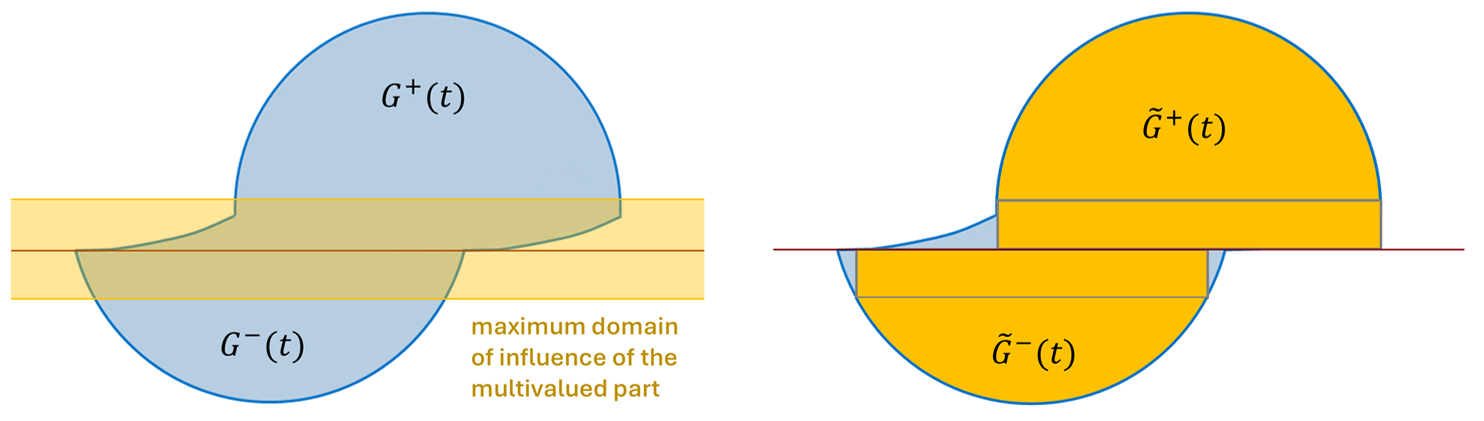}\vspace{-0.1in}
\caption{
Illustration of the cusp removal and refill by extrusion strategy.}\label{DBFig_cusp-removal}
\end{figure}

\vskip2mm
\emph{Standing local notation and constants.}
\vskip1mm

Since \(G_0\subset\Omega\) is compact, the local boundedness of the differential
inclusion and the linear growth bound allow, after decreasing \(T>0\), the
choice of a compact set \(K\subset\Omega\) with the following properties. All
solution branches which occur below, forward or backward between times \(0\)
and \(s\) with \(|s|<T\), remain in the interior of \(K\). Moreover, \(K\)
contains, with positive distance from its boundary, the compact set swept out
from \(D_0\) by the geometric surface flow introduced below, for \(|s|<T\),
together with a fixed spatial neighborhood of this swept set; after further
decreasing \(T\), this neighborhood contains all normal segments used in the
cut-and-fill construction below.

On \(K\), let \(d(t,\cdot)\) denote the signed distance to \(\iface(t)\), positive
in \(\Omega^+(t)\) with the above orientation.
Decreasing \(T\) further, if necessary, there exists \(r_*>0\) such that the
signed-distance coordinates are valid on the fixed set
\[
        \mathcal N_{K,r_*}
        :=\{(t,\DBx)\in(-T,T)\times K:\ |d(t,\DBx)|<r_*\},
\]
which is a tubular neighborhood of
\(\gr(\iface)\cap((-T,T)\times K)\). We also write
\[
        \mathcal N_{K,r_*}(s)
        :=\{\DBx\in K:(s,\DBx)\in\mathcal N_{K,r_*}\}
        =\{\DBx\in K:|d(s,\DBx)|<r_*\}
\]
for its time slices. For
every point \((t,\DBx)\in\mathcal N_{K,r_*}\), $\DBx$ has a unique representation
\[
        \DBx=\mathbf p+r\DBn_\Sigma(t,\mathbf p),\qquad
        \mathbf p\in\iface(t),\quad |r|<r_* ,
\]
where \(r=d(t,\DBx)\) and \(\mathbf p=:\pi_t(\DBx)\) is the nearest point
projection. 
The corresponding signed-distance coordinates are
\[
        (\mathbf p,r)=\bigl(\pi_t(\DBx),d(t,\DBx)\bigr).
\]
On the set \(\mathcal N_{K,r_*}\), the signed-distance function
\(d\) and the projection \(\pi_t\) are continuously differentiable. On the
interface, we have
\[
        \nabla_\DBx d(t,\mathbf p)=\DBn_\Sigma(t,\mathbf p),
        \qquad
        \partial_t d(t,\mathbf p)=-V_\Sigma(t,\mathbf p).
\]
The purely geometric normal velocity of the interface will be denoted by
\[
        \DBw_\Sigma:=V_\Sigma\DBn_\Sigma .
\]
By the regularity of a \(\mathcal C^{1,2}\)-family of moving hypersurfaces and the compactness of $D_0$, after decreasing $T$ if necessary, the
constrained ODE associated with \(\DBw_\Sigma\) generates, for \(|s|<T\), a
\(C^1\) surface flow $\Psi_s$ on a neighborhood of $D_0$ in $\iface_0$, with values in $\iface(s)$; cf.\ \cite{Bo-2PH-ODE}. Set
\[
        D_s:=\Psi_s(D_0).
\]
By the reduction above, \(D_0\) is a finite union of compact \(C^2\) surface
domains\footnote{Here and below, a compact \(C^2\) surface domain in a \(C^2\)
surface \(S\) means a compact regular closed subset of \(S\), possibly with
finitely many connected components, whose relative interior is a \(C^2\) domain
in \(S\), equivalently in local surface charts, and whose relative boundary is
a \(C^2\) curve.} in \(\iface_0\), with relative boundary \(\Gamma\). Hence
\(D_s\) is a finite union of compact \(C^1\) surface domains in \(\iface(s)\),
with relative boundary \(\Psi_s(\Gamma)\). For a set \(D\subset\iface(s)\) and
\(\rho>0\), the notation
\[
        D^{+\rho}:=\{\mathbf p\in\iface(s):
        \operatorname{dist}_{\iface(s)}(\mathbf p,D)<\rho\},
        \quad
        D^{-\rho}:=\{\mathbf p\in D:
        \operatorname{dist}_{\iface(s)}(\mathbf p,\iface(s)\setminus D)\ge\rho\},
\]
will be used, where \(\operatorname{dist}_{\iface(s)}\) denotes the intrinsic,
i.e.\ geodesic, distance on \(\iface(s)\).

All constants below are local to \((-T,T)\times K\). The letter \(C\)
denotes a generic positive constant whose value may change from line to line.
Such constants may depend on the compact set \(K\), on the time interval
\((-T,T)\), on \(\mathcal N_{K,r_*}\) and finite coordinate atlases, on the
local geometry of \(G_0\) and of the moving interface, and on the local bounds
of the relevant velocity fields. They are independent of the small parameters
\(s,r,\rho,\varepsilon,h\) and of the particular admissible solution branch.

Constants carrying labels, subscripts, or descriptive names, such as
\(C_0\), \(C_{\rm tub}\), \(C_*\), \(C_R\), \(\rho_0\), \(T_0\), \(m\), and
\(M\), are not generic once fixed. The same applies to the strip constant
\(\omega\) introduced in Claim~2 below. Within the local argument in which
such a constant is introduced, it may be enlarged finitely many times. Once a
later constant is chosen depending on it, the earlier constant is kept fixed.
The strip constant \(\omega\) appears in \(D_s^{-\omega s}\) and
\(D_s^{+\omega s}\). If \(\omega\) is enlarged later, these sets are always
understood with the enlarged value. This preserves all previously established
inclusions, because increasing \(\omega\) shrinks \(D_s^{-\omega s}\) and
enlarges \(D_s^{+\omega s}\).
Since all solution branches considered below remain in \(K\), and since the
one-sided velocity fields are bounded on the compact phase closures there, we
can choose constants \(0<m<M\) such that
\begin{equation}\label{DBeq:cutfill-M-boundary-new}
        |\DBw|\le m
        \quad\text{for all }(t,\DBx)\in(-T,T)\times K,\quad
        \DBw\in\widehat{\DBv}(t,\DBx),
\end{equation}
and
\begin{equation}\label{eq:speed-bound}
        |\partial_t d(t,\DBx)+\nabla_\DBx d(t,\DBx)\cdot \DBw|
        \le m
        \qquad
        \text{on }\mathcal N_{K,r_*}
        \text{ for all }\DBw\in\widehat{\DBv}(t,\DBx).
\end{equation}
After decreasing \(T\) once more, we may assume \(2MT<r_*\). Hence, for every
\(0<s<T\), the signed-distance levels
\[
        d(s,\DBx)=\pm Ms,
\]
which play a particular role below, lie inside \(\mathcal N_{K,r_*}(s)\). The
estimate \eqref{eq:speed-bound} above is the speed bound for the signed distance
along admissible trajectories inside \(\mathcal N_{K,r_*}\). The strict
inequality \(m<M\) is essential: once a trajectory has met the interface, it
cannot reach the levels \(d(s,\DBx)=\pm Ms\) by time \(s\).

\vskip2mm
\emph{Step 1: surface geometry of the transported interfacial section.}
\vskip1mm

The first claim compares \(D_s\subset\Sigma(s)\) with \(D(s,r)\), the set of points on \(\Sigma(s)\) whose normal displacement by signed distance
\(r\) belongs to \(G_0\).

\emph{Claim 1.} There are constants \(C_0>0\), \(\rho_0>0\) and \(T_0\in(0,T)\)
such that, for \(0<s<T_0\) and \(|r|<\rho_0\), the set
\[
        D(s,r):=\{\mathbf p\in\iface(s):
        \mathbf p+r\DBn_\Sigma(s,\mathbf p)\in G_0\}
\]
satisfies
\begin{equation}\label{DBeq:cutfill-slice-boundary-new}
        D_s^{-C_0(s+|r|)}
        \subset D(s,r)
        \subset D_s^{+C_0(s+|r|)} .
\end{equation}
Moreover,
\begin{equation}\label{DBeq:cutfill-layer-boundary-new}
        \mathcal H^2\big(D_s^{+\rho}\setminus D_s^{-\rho}\big)
        \le C_0\rho
        \qquad
        (0<\rho<\rho_0,\ 0<s<T_0).
\end{equation}

We use the signed-distance defining function \(q\) chosen above. Let \(U_\Gamma\subset\iface_0\)
be a fixed compact surface neighborhood of \(\Gamma\), chosen so small that
\(q\) is defined in a spatial neighborhood of \(U_\Gamma\) and
\[
        D_0\cap U_\Gamma
        =
        \{\mathbf z\in U_\Gamma:q(\mathbf z)\le0\}.
\]
After decreasing \(T_0\) and \(\rho_0\), if necessary, all points
\[
        \Psi_s(\mathbf z)+r\DBn_\Sigma(s,\Psi_s(\mathbf z)),
        \qquad
        \mathbf z\in U_\Gamma,\quad 0\le s<T_0,\quad |r|<\rho_0,
\]
lie in \(U_q\). Thus the functions
\[
        q_{s,r}(\mathbf z)
        :=
        q\bigl(\Psi_s(\mathbf z)+r\DBn_\Sigma(s,\Psi_s(\mathbf z))\bigr),
        \qquad \mathbf z\in U_\Gamma,
\]
are well defined. For \(\mathbf z\in U_\Gamma\) one has
\[
        \Psi_s(\mathbf z)\in D(s,r)
        \;\Leftrightarrow \;
        \Psi_s(\mathbf z)+r\DBn_\Sigma(s,\Psi_s(\mathbf z))\in G_0 
        \;\Leftrightarrow \;
        q_{s,r}(\mathbf z)\le0 .
\]
Thus, near the transported boundary curve, \(q_{s,r}\) is the level-set
description of \(D(s,r)\), pulled back from \(\iface(s)\) to the fixed surface
\(\iface_0\).

Since \(\partial_{\iface_0}D_0=\Gamma\) and
\(\nabla_{\iface_0}q\ne0\) on \(\Gamma\), compactness of $\Gamma$ gives
\[
        |\nabla_{\iface_0}q|\ge c_\Gamma>0
        \qquad\text{on }\Gamma
\]
for some \(c_\Gamma>0\). The \(C^1\)-dependence of the signed-distance
coordinates, of \(\DBn_\Sigma\), and of the surface flow \(\Psi_s\) gives, for
some constant \(C_q>0\) independent of \(s\) and \(r\),
\[
        \|q_{s,r}-q|_{\iface_0}\|_{C^1(U_\Gamma)}
        \le C_q(s+|r|)
\]
for all sufficiently small \(s\ge0\) and \(|r|\). Here, and in the following,
\(C^1\)-norms on \(U_\Gamma\subset\iface_0\) are computed in a fixed finite
system of surface charts. The
implicit function theorem with parameters, applied in finitely many surface
charts covering \(\Gamma\), therefore implies that the zero set of \(q_{s,r}\)
is a \(C^1\) graph over the corresponding part of \(\Gamma\), with graph norm
bounded by \(C_\Gamma(s+|r|)\) for some constant \(C_\Gamma>0\). Returning to
\(\iface(s)\) by \(\Psi_s\), this gives
\[
        \partial_{\iface(s)}D(s,r)
        \subset
        \{\mathbf p\in\iface(s):
        \operatorname{dist}_{\iface(s)}(\mathbf p,
        \partial_{\iface(s)}D_s)
        \le C_\Gamma(s+|r|)\},
\]
and conversely
\[
        \partial_{\iface(s)}D_s
        \subset
        \{\mathbf p\in\iface(s):
        \operatorname{dist}_{\iface(s)}(\mathbf p,
        \partial_{\iface(s)}D(s,r))
        \le C_\Gamma(s+|r|)\}.
\]
Thus the two boundary curves are close in the intrinsic metric of \(\iface(s)\),
with distance controlled by \(s+|r|\).

Away from the fixed neighborhood \(U_\Gamma\) of the boundary curve, the sets
under consideration are separated from their relative boundary by a positive
intrinsic distance. Equivalently, on compact subsets of
\(\operatorname{int}_{\iface_0}D_0\) and of \(\iface_0\setminus D_0\) which are
disjoint from a neighborhood of \(\Gamma\), membership in \(G_0\), respectively
in the complement of \(G_0\), is stable under the small perturbations generated
by \(\Psi_s\) and by the normal shift \(|r|<\rho_0\). Hence \(D(s,r)\) and
\(D_s\) can differ only in the intrinsic boundary strip around
\(\partial_{\iface(s)}D_s\) whose width is \(O(s+|r|)\), as estimated above.
Combining this with the preceding boundary-graph comparison proves
\eqref{DBeq:cutfill-slice-boundary-new} after choosing \(C_0\ge C_\Gamma\)
sufficiently large.

Finally, \(\partial_{\iface(s)}D_s=\Psi_s(\Gamma)\) is a compact embedded
\(C^1\) curve, with only finitely many connected components, on the \(C^2\)
surface \(\iface(s)\). Since \(\Gamma\) is compact and \(\Psi_s\) depends
continuously on \(s\) in \(C^1\) on compact sets, there is, after decreasing
\(T_0\) if necessary, a finite family of surface charts covering
\(\partial_{\iface(s)}D_s\) for all \(0<s<T_0\). In these charts the curve is
represented by Lipschitz graphs with uniformly bounded graph constants and
uniformly bounded total length. The intrinsic \(\rho\)-strip around such a
finite family of graphs has surface area bounded by \(C\rho\), uniformly for
small \(s\). Increasing
\(C_0\), if necessary, we choose it larger than all constants arising in the
preceding finite chart comparison and in this strip estimate. This proves
\eqref{DBeq:cutfill-layer-boundary-new}.

\vskip2mm
\emph{Step 2: reachability estimates in the interfacial strip.}
\vskip1mm

The signed-distance speed bound will be used to localize all genuinely
multivalued effects to the interfacial strip. We first record the elementary
global reachability estimate
\begin{equation}\label{DBeq:cutfill-outer-reach-boundary-new}
        G(s)\subset G_0+\overline B_{Ms}(0),
\end{equation}
which follows directly from the speed bound $|\dot\gamma|\le m<M$ along
admissible trajectories.

\emph{Claim 2.} There is a constant $\omega\ge C_0$ such that, after possibly
decreasing $T_0$, the following inclusions hold for all $0<s<T_0$:
\begin{equation}\label{DBeq:cutfill-outer-strip-boundary-new}
        G(s)\cap\{\DBx\in\mathcal N_{K,r_*}(s): |d(s,\DBx)|\le Ms\}
        \subset
        \{\mathbf p+r\DBn_\Sigma(s,\mathbf p):
        \mathbf p\in D_s^{+\omega s},\ |r|\le Ms\},
\end{equation}
and
\begin{equation}\label{DBeq:cutfill-inner-strip-boundary-new}
        \{\mathbf p+r\DBn_\Sigma(s,\mathbf p):
        \mathbf p\in D_s^{-\omega s},\ |r|\le Ms\}\subset G(s).
\end{equation}

In the proof of the claim we decrease $T_0$, if necessary, so that
$MT_0<\rho_0$; hence Claim~1 applies to all normal levels $|r|\le Ms$
considered for $0<s<T_0$.

We first prove \eqref{DBeq:cutfill-outer-strip-boundary-new}. Let
$\DBx\in G(s)\cap\mathcal N_{K,r_*}(s)$ with $|d(s,\DBx)|\le Ms$. Then
$\DBx=\gamma(s)$ for an admissible trajectory with
$\gamma(0)=\DBx_0\in G_0$, and $|\DBx-\DBx_0|\le ms$. Since
\[
        \operatorname{dist}(\DBx_0,\iface(s))
        \le |\DBx_0-\DBx|+\operatorname{dist}(\DBx,\iface(s))
        \le ms+Ms<2Ms<r_*,
\]
the point $\DBx_0$ also lies in $\mathcal N_{K,r_*}(s)$, after the choice of
$T$ made above. We may therefore write, in the signed-distance coordinates associated with $\iface(s)$,
\[
        \DBx=\mathbf p+r\DBn_\Sigma(s,\mathbf p),
        \qquad
        \DBx_0=\mathbf q+\rho\DBn_\Sigma(s,\mathbf q).
\]
The inverse signed-distance coordinate map
\[
        \DBx\mapsto(\pi_s(\DBx),d(s,\DBx))
\]
is Lipschitz on $\mathcal N_{K,r_*}(s)$, with Lipschitz constant bounded
uniformly for small $s$. Since $|\DBx-\DBx_0|\le ms$, this gives
\[
        \operatorname{dist}_{\iface(s)}(\mathbf p,\mathbf q)+|r-\rho|
        \le C_{\rm tub}s .
\]
Together with $|r|\le Ms$, and after increasing $C_{\rm tub}$ if necessary,
we also obtain $|\rho|\le C_{\rm tub}s$. The constant $C_{\rm tub}$ is
independent of $s$, of the admissible trajectory, and of the final choice of
the strip constant $\omega$.
After decreasing $T_0$, we may assume $C_{\rm tub}s<\rho_0$ for $0<s<T_0$.
Since $\DBx_0\in G_0$, one has $\mathbf q\in D(s,\rho)$. Claim~1 gives
\[
        \mathbf q\in D_s^{+C_0(s+|\rho|)}
        \subset D_s^{+C_0(1+C_{\rm tub})s}.
\]
Together with
$\operatorname{dist}_{\iface(s)}(\mathbf p,\mathbf q)\le C_{\rm tub}s$, this
implies
\[
        \mathbf p\in
        D_s^{+\{C_0(1+C_{\rm tub})+C_{\rm tub}\}s}.
\]
Choose a constant $C_*>0$ such that
\[
        C_*\ge C_0(1+C_{\rm tub})+C_{\rm tub}.
\]
Then $\mathbf p\in D_s^{+C_*s}$. The strip constant $\omega$ asserted in Claim~2 will
be chosen with $\omega\ge C_*$, and this proves
\eqref{DBeq:cutfill-outer-strip-boundary-new}.

It remains to prove \eqref{DBeq:cutfill-inner-strip-boundary-new}. We first
derive a purely geometric distance estimate with a depth parameter
$\Lambda\ge C_0$, keeping all constants in that estimate independent of
$\Lambda$. Only after this estimate has been obtained is the final strip
constant $\omega$ chosen, and the estimate is then applied with $\Lambda=\omega$; the
final time interval may be decreased after this choice.

Let
\[
        \DBx=\mathbf p+r\DBn_\Sigma(s,\mathbf p),
        \qquad
        \mathbf p\in D_s^{-\Lambda s},
        \qquad |r|\le Ms .
\]
After possibly decreasing $T_0$, the signed-distance parametrization and its inverse,
\[
        (\mathbf p,r)\mapsto \mathbf p+r\DBn_\Sigma(s,\mathbf p),
        \qquad
        \DBx\mapsto(\pi_s(\DBx),d(s,\DBx)),
\]
have Lipschitz constants bounded uniformly on $\mathcal N_{K,r_*}(s)$. Let
$U_\Gamma$ be the fixed neighborhood of $\Gamma$ used in Claim~1. If
$\mathbf p$ lies outside the transported neighborhood $\Psi_s(U_\Gamma)$,
then, by the reduction $\partial_{\iface_0}D_0=\Gamma$, the points
$\mathbf p\in D_s^{-\Lambda s}$ under consideration lie in a compact subset
of the relative interior of $D_s$, uniformly for small $s$. Since
$D_s=\Psi_s(D_0)$ converges to $D_0$ and $|r|\le Ms$, the points
$\mathbf p+r\DBn_\Sigma(s,\mathbf p)$ have positive distance from
$G_0^\complement$, uniformly for sufficiently small $s$. After the final
value of $\omega$ has been chosen, $T_0$ may therefore be decreased so that this
distance is at least $2Ms$ for all $0<s<T_0$. This proves the required distance
lower bound in this case. It remains to consider
$\mathbf p\in\Psi_s(U_\Gamma)$.

By Claim~1, the relative boundary $\partial_{\iface(s)}D(s,r)$ is contained
in the intrinsic $C_0(s+|r|)$-neighborhood of
$\partial_{\iface(s)}D_s$. Therefore the intrinsic distance of $\mathbf p$
from $\iface(s)\setminus D(s,r)$ is bounded from below by
\[
        \Lambda s-C_0(s+|r|).
\]
For the final choice of the strip constant below this lower bound will be
positive. In the local coordinates used in Claim~1, the function
\[
        \mathbf p'\mapsto
        q\bigl(\mathbf p'+r\DBn_\Sigma(s,\mathbf p')\bigr)
\]
is a defining function for $D(s,r)$ whose tangential derivative along the zero
set is bounded away from zero, uniformly for small $s$ and $|r|$. Hence, after
decreasing the coordinate neighborhoods if necessary, the uniform comparison
between this defining function and the intrinsic distance to its zero set gives
\[
        q(\DBx)\le -c\bigl(\Lambda s-C_0(s+|r|)\bigr).
\]
For the final choice of the strip constant below, the right-hand side is
negative. Since $q$ is the signed distance to $\partial G_0$, with negative
sign in $G_0^\circ$, this implies
\[
        \operatorname{dist}(\DBx,G_0^\complement)
        = -q(\DBx)
        \ge c\bigl(\Lambda s-C_0(s+|r|)\bigr).
\]
Since $|r|\le Ms$, this yields
\[
        \operatorname{dist}(\DBx,G_0^\complement)
        \ge c\bigl(\Lambda-C_0(1+M)\bigr)s .
\]
Now choose the final strip constant $\omega$ in Claim~2 so large that
\[
        \omega\ge C_*,
        \qquad \omega\ge C_0,
        \qquad c\bigl(\omega-C_0(1+M)\bigr)\ge 2M .
\]
With this final choice of $\omega$, decrease $T_0$ if necessary so that all
local coordinate estimates used in the preceding estimate apply with
$\Lambda=\omega$. The estimate above with $\Lambda=\omega$ hence gives, for
every $\mathbf p\in D_s^{-\omega s}$ and $|r|\le Ms$,
\[
        \operatorname{dist}(\DBx,G_0^\complement)\ge 2Ms .
\]

By Lemma~\ref{DBlem:kinematic-DI-existence}, together with the preceding choice
of $K$ and the reduction of $T_0$, there is an admissible solution branch
through $\DBx$ at time $s$, defined backward down to time $0$. Equivalently,
this gives an admissible absolutely continuous curve on $[0,s]$ whose endpoint
at time $s$ is $\DBx$. The speed bound shows that its time-zero value lies
within distance at most $ms<Ms$ from $\DBx$, hence belongs to $G_0$. Viewed on
the interval $[0,s]$, the same branch is therefore a solution of the original
inclusion with initial value in $G_0$ and endpoint $\DBx$ at time $s$. Thus
$\DBx\in G(s)$.

\vskip2mm
\emph{Step 3: the one-sided flows outside the strip.}
\vskip1mm
This step provides a fixed-domain parametrization of the parts of the
attainable set lying outside the interfacial strip. This will be used below
to verify the Lipschitz space-time geometry of the cut pieces and to identify
the boundary contributions coming from the transported parts of the original
boundary \(\partial G_0\).

Define the parts of the transported set outside the artificial strip, within the
fixed compact set \(K\), by
\[
        A_s^\pm:=G(s)\cap\{\DBx\in K:\ \pm d(s,\DBx)>Ms\}.
\]
The point of the preceding strip construction is that a trajectory which has
touched the interface cannot reach \(A_s^\pm\) at time \(s\). Hence the dynamics
on these outside pieces is governed by the corresponding one-sided velocity
field. We use auxiliary single-valued extensions of \(\DBv^\pm\) to parametrize
these outside pieces by classical flow maps. The extensions are chosen so that they agree with \(\DBv^\pm\) on all relevant
one-sided phase trajectories and so that, in a smaller tubular neighborhood,
the signed-distance speed estimate used above remains valid for the auxiliary
fields.

More precisely, on \(\mathcal N_{K,r_*}\) write\vspace{-0.1in}
\[
        \DBx=\mathbf p+r\DBn_\Sigma(t,\mathbf p)\vspace{-0.05in}
\]
and set\vspace{-0.05in}
\[
        r^\pm:=\max\{\pm r,0\},
        \qquad
        \Pi_t^\pm(\DBx):=\mathbf p\pm r^\pm\DBn_\Sigma(t,\mathbf p).
\]
Then \(\Pi_t^\pm\) is the identity on \(\overline{\Omega^\pm(t)}\) and maps
\(\Omega^\mp(t)\) to \(\iface(t)\) along the normal projection. In a smaller
tubular neighborhood of \(\gr(\Sigma)\cap((-T,T)\times K)\) we set
\[
        \widetilde{\DBv}^{\,+}(t,\DBx)
        :=\DBv^+(t,\Pi_t^+(\DBx)),\qquad
        \widetilde{\DBv}^{\,-}(t,\DBx)
        :=\DBv^-(t,\Pi_t^-(\DBx)).
\]
After decreasing \(T_0\) and the tubular radius, if necessary, these fields are
jointly continuous and locally uniformly Lipschitz in the spatial variable on
the smaller tube. They agree there with \(\DBv^\pm\) on the corresponding closed
phase.

We now extend \(\widetilde{\DBv}^{\,\pm}\) to single-valued vector fields on a
neighborhood of \((-T,T)\times K\), still denoted by
\(\widetilde{\DBv}^{\,\pm}\), which are jointly continuous and locally uniformly
Lipschitz in space, and which satisfy
\[
        \widetilde{\DBv}^{\,\pm}=\DBv^\pm
        \quad\text{on } \overline{\Omega^\pm(t)}\cap K,
        \qquad
        \widetilde{\DBv}^{\,\pm}(t,\DBx)
        =\DBv^\pm(t,\Pi_t^\pm(\DBx))
        \quad\text{in a smaller interfacial tube.}
\]
To construct them, choose two nested signed-distance collars. On the inner
collar use the fields defined above. On the opposite phase in the annular part
of the larger collar, blend these fields, by a Lipschitz cut-off depending only
on the signed distance, with a fixed bounded vector; outside the larger collar
on that side use this fixed vector, while on the original phase retain
\(\DBv^\pm\). Since the collar extension agrees with \(\DBv^\pm\) on the
closed original phase, this gives jointly continuous fields. The local uniform
spatial Lipschitz bounds of \(\DBv^\pm\), together with the uniform Lipschitz
bounds for the signed-distance coordinates, give the required uniform spatial
Lipschitz bounds. Thus the flows introduced below represent the original
one-sided phase dynamics whenever the corresponding trajectory stays in that
phase; after decreasing \(T_0\) once more, all trajectories used below remain
in the neighborhood where the preceding agreement holds.

Let \(X_s^\pm\) denote the classical flows generated by
\(\widetilde{\DBv}^{\,\pm}\), starting at time \(0\). Since
\(\widetilde{\DBv}^{\,\pm}\) are bounded and locally Lipschitz in space on
the compact region under consideration, Gronwall's inequality gives, after
decreasing \(T_0\) if necessary,
\begin{equation}\label{DBeq:cutfill-flow-small-new}
        \operatorname{Lip}(X_s^\pm-\operatorname{Id})\le Cs,
        \qquad
        \sup_{\xi\in G_0}|X_s^\pm(\xi)-\xi|\le Cs .
\end{equation}
This is the standard short-time flow estimate for bounded vector fields that are
uniformly Lipschitz in space: if \(L\) is a spatial Lipschitz bound for
\(\widetilde{\DBv}^{\,\pm}\) on the relevant compact region, then
\[
        \operatorname{Lip}(X_s^\pm-\operatorname{Id})
        \le e^{Ls}-1\le Cs,
        \qquad
        \sup_{\xi\in G_0}|X_s^\pm(\xi)-\xi|
        \le s\|\widetilde{\DBv}^{\,\pm}\|_\infty\le Cs .
\]

The signed distance is now used in two different ways. On the fixed compact set
\(K\), for times in \((-T,T)\), it is used only as a Lipschitz function which
detects on which side of the interface a point lies. Differentiability of \(d\), of \(\pi_s\), and of
the signed-distance coordinates is used only in the fixed tubular neighborhood where
these objects are \(C^1\).

Define
\[
        f_s^\pm(\xi):=d(s,X_s^\pm(\xi)).
\]
We write this level function as a perturbation of the initial signed distance by setting
\[
        \mathcal R_s^\pm(\xi):=f_s^\pm(\xi)-d(0,\xi)
        =d(s,X_s^\pm(\xi))-d(0,\xi).
\]
Since the flow displacement is \(O(s)\) uniformly for \(\xi\in G_0\), and since
the signed-distance functions \(d(s,\cdot)\) vary by \(O(s)\) on the fixed compact
set \(K\), this remainder satisfies
\begin{equation}\label{DBeq:cutfill-level-small-new}
        f_s^\pm(\xi)=d(0,\xi)+\mathcal R_s^\pm(\xi),
        \qquad
        \sup_{\xi\in G_0}|\mathcal R_s^\pm(\xi)|\le Cs .
\end{equation}
In addition, the small Lipschitz estimates for \(\mathcal R_s^\pm\) hold locally
near the initial interface. More precisely, fix
\[
        U_{\rm tub}:=\{\xi\in K: |d(0,\xi)|<r_*/4\}.
\]
Since the auxiliary fields and their flows are defined on a neighborhood of
\(K\), the estimates used here are not restricted to points of \(G_0\). After
decreasing \(T_0\), if necessary,
\[
        X_s^\pm(U_{\rm tub})\subset\mathcal N_{K,r_*}(s),
        \qquad
        \sup_{\xi\in U_{\rm tub}}|\mathcal R_s^\pm(\xi)|\le Cs
        \qquad (0<s<T_0),
\]
and the \(C^1\) regularity of the signed-distance coordinates, together with
\eqref{DBeq:cutfill-flow-small-new}, yields
\[
        \operatorname{Lip}\bigl(\mathcal R_s^\pm;U_{\rm tub}\bigr)\le Cs .
\]
Moreover, after decreasing \(T_0\) if necessary, the same argument gives the
joint local estimate
\begin{equation}\label{DBeq:cutfill-level-joint-lip-new}
        |\mathcal R_s^\pm(\xi)-\mathcal R_\tau^\pm(\eta)|
        \le C|s-\tau|+C T_0|\xi-\eta|
\end{equation}
whenever \(0<s,\tau<T_0\) and \(\xi,\eta\in U_{\rm tub}\). Indeed, for the spatial part one uses the preceding estimate
\(\operatorname{Lip}(\mathcal R_s^\pm;U_{\rm tub})\le Cs\), hence
\[
        |\mathcal R_s^\pm(\xi)-\mathcal R_s^\pm(\eta)|
        \le Cs|\xi-\eta|\le CT_0|\xi-\eta| .
\]
For the time part, the uniform Lipschitz dependence of
\(s\mapsto X_s^\pm(\eta)\), together with the bounded first derivatives of \(d\)
in \(\mathcal N_{K,r_*}\), gives
\[
        |\mathcal R_s^\pm(\eta)-\mathcal R_\tau^\pm(\eta)|
        \le C|s-\tau|.
\]
Combining the two estimates yields
\eqref{DBeq:cutfill-level-joint-lip-new}.

These local estimates are sufficient for the cut geometry in Step~4. Indeed, if
\(\pm f_s^\pm(\xi)=Ms\), then the estimate in
\eqref{DBeq:cutfill-level-small-new} gives
\[
        |d(0,\xi)|\le (M+C)s .
\]
After decreasing \(T_0\), all such points, and a fixed small neighborhood of the
corresponding level sets, lie in \(U_{\rm tub}\). Away from this initial tubular
neighborhood the sign of \(\pm f_s^\pm-Ms\) is fixed for small \(s\), and no
differentiability of the signed distance is used.

The tubular radius and the constants are chosen so that,
after possibly increasing \(m\) while keeping \(m<M\), the auxiliary fields
satisfy the same signed-distance speed bound,
\[
        |\partial_t d(t,\DBx)+\nabla_\DBx d(t,\DBx)\cdot
        \widetilde{\DBv}^{\,\pm}(t,\DBx)|\le m<M,
\]
for all \((t,\DBx)\) in the smaller interfacial tube used above. On the interface
this follows from \eqref{eq:speed-bound} applied to the one-sided traces, and in
the tube it follows by continuity after choosing the tubular radius sufficiently
small.

The next claim makes precise that, outside the strip, the multivalued
attainable-set evolution is represented by the corresponding one-sided flow. It
also represents the artificial cut faces as images of level sets in the fixed
initial set \(G_0\). This fixed-domain parametrization will be used in the
Lipschitz-regularity analysis in the next step.

\emph{Claim 3.} For \(0<s<T_0\),
\begin{equation}\label{DBeq:cutfill-As-flow-representation-new}
        A_s^\pm=X_s^\pm(E_s^\pm),
        \qquad
        E_s^\pm:=\{\xi\in G_0:\ \pm f_s^\pm(\xi)>Ms\}.
\end{equation}
Moreover, define the artificial cut faces by
\begin{equation}\label{DBeq:cutfill-face-definition-new}
        P_s^\pm:=G(s)\cap\{\DBx\in K:\ \pm d(s,\DBx)=Ms\}.
\end{equation}
Then these faces are represented as
\begin{equation}\label{DBeq:cutfill-face-flow-representation-new}
        P_s^\pm=X_s^\pm(F_s^\pm),
        \qquad
        F_s^\pm:=\{\xi\in G_0:\ \pm f_s^\pm(\xi)=Ms\}.
\end{equation}

It suffices to prove the plus-sign statements; the corresponding minus-sign statements follow analogously. Let
\(\DBx\in A_s^+\). Then \(\DBx=\gamma(s)\) for an admissible trajectory with
\(\gamma(0)\in G_0\). If this trajectory met the interface at some time
\(\tau\in[0,s]\), then, since \(d(s,\gamma(s))>Ms\), there would be a first time
\(\sigma\in(\tau,s]\) after \(\tau\) at which
\(|d(\sigma,\gamma(\sigma))|=Ms\). On \([\tau,\sigma]\) the curve lies in the
tubular neighborhood, and the signed-distance speed bound gives
\[
        Ms\le m(\sigma-\tau)\le ms<Ms,
\]
a contradiction. Hence the trajectory never meets the interface. Since \(d(s,\gamma(s))>Ms\),
its endpoint belongs to \(\Omega^+(s)\), and therefore the trajectory stays in
\(\Omega^+(\theta)\) for \(0\le \theta\le s\). There
the differential inclusion is single-valued and agrees with the ODE generated by
\(\DBv^+\), equivalently with the auxiliary flow \(X^+\). Thus
\[
        \gamma(\theta)=X_\theta^+(\gamma(0))
        \qquad (0\le\theta\le s),
\]
and \(\DBx\in X_s^+(E_s^+)\).

Conversely, let \(\xi\in E_s^+\). If the auxiliary trajectory
\(\theta\mapsto X_\theta^+(\xi)\) met the interface at some time
\(\tau\in[0,s]\), then the same first-exit argument, using the auxiliary
signed-distance speed bound in the tubular neighborhood, would contradict
\(f_s^+(\xi)>Ms\). Thus this auxiliary trajectory does not meet the interface.
Since its endpoint satisfies \(d(s,X_s^+(\xi))>Ms\), it stays in
\(\Omega^+(\theta)\) for \(0\le \theta\le s\). On this interval the auxiliary
velocity agrees with \(\DBv^+\),
hence the trajectory is an admissible branch of the differential inclusion. Its
endpoint therefore belongs to \(A_s^+\). This proves the plus case of
\eqref{DBeq:cutfill-As-flow-representation-new}; the minus case follows analogously.

The representation of the artificial cut face in
\eqref{DBeq:cutfill-face-flow-representation-new} follows from the same
first-exit argument with equality in the terminal level. Indeed, an admissible
trajectory ending in \(P_s^+\), or an auxiliary trajectory starting from
\(F_s^+\), cannot have met the interface before time \(s\): otherwise, for the
first time \(\sigma\in(\tau,s]\) after such a contact at which
\(|d(\sigma,\cdot)|=Ms\), the signed-distance speed bound gives
\[
        Ms\le m(\sigma-\tau)<M(\sigma-\tau)\le Ms,
\]
a contradiction. Hence the relevant trajectory stays in \(\Omega^+(\theta)\) for
\(0\le\theta\le s\), where it is represented by \(X^+\) and agrees with an
admissible branch of the differential inclusion. This proves the plus case of
\eqref{DBeq:cutfill-face-flow-representation-new}; the minus case follows
analogously.

\vskip2mm
\emph{Step 4: Lipschitz regularity of the cut-and-fill geometry.}
\vskip1mm
The artificial cut faces \(P_s^\pm\) are now projected onto the interface; the resulting sets will serve as the bases of the normal fills added back to the interface. Define
\[
        B_s^\pm:=\pi_s(P_s^\pm),
        \qquad
        C_s^\pm:=\{\mathbf p\pm r\DBn_\Sigma(s,\mathbf p):
        \mathbf p\in B_s^\pm,
        \ 0<r<Ms\}.
\]
Finally set\vspace{-0.1in}
\[
        \widetilde G_s^\pm:=A_s^\pm\cup C_s^\pm .
\]
At this point we adopt a boundary convention to avoid unnecessary notation.
For the cut-and-fill sets we do not distinguish notationally between representatives
which differ only by subsets of the Lipschitz boundary faces just introduced. Whenever
a Lipschitz domain is meant, the corresponding open representative is used; whenever
a compact Lipschitz domain is required, its closure is used. These representatives differ
only on sets of zero volume, so all volume integrals are unchanged. In particular,
$P_s^\pm$ and $B_s^\pm$ are retained as the artificial cut and interface faces in the
boundary geometry below.

This is the only point where the detailed Lipschitz geometry of the cut-and-fill
construction is needed; it ensures that Theorem~\ref{thm:single-phase-boundary-rtt}
can be applied on each interval $(\varepsilon,h)$.

\emph{Claim 4.} For all $0<s<T_0$, the sets $A_s^\pm$, $C_s^\pm$ and
$\widetilde G_s^\pm$, understood with the preceding boundary convention, are compact
Lipschitz domains, and $B_s^\pm$ are compact Lipschitz surface domains on $\iface(s)$.
If $0<\varepsilon<h<T_0$, then the closures of the corresponding swept space-time tubes generated by
\[
\begin{gathered}
        \{(s,\DBx):\varepsilon<s<h,
        \ \DBx\in A_s^\pm\},\qquad
        \{(s,\DBx):\varepsilon<s<h,
        \ \DBx\in C_s^\pm\},\\
        \{(s,\DBx):\varepsilon<s<h,
        \ \DBx\in\widetilde G_s^\pm\}
\end{gathered}
\]
are compact Lipschitz domains in space-time. Moreover, their boundaries have the natural
time-slice/lateral decomposition, up to null sets. The Lipschitz constants of the graph
representations, the areas of $B_s^\pm$, and the lengths of $\partial B_s^\pm$ are bounded
uniformly for $0<s<T_0$. {The normal velocities of the swept normal lateral
faces $L_s^\pm$ are also bounded uniformly for $0<s<T_0$, away from their edge sets.}
The only non-uniform geometric quantity is the height of the fills $C_s^\pm$, which is
$Ms$; on each interval $\varepsilon<s<h$ this height is positive, and all estimates used
below are independent of $\varepsilon$.

We prove the claim for the plus sign; the minus-sign case follows analogously
after reversing the signed-distance coordinate. The proof has four parts, corresponding to the four
geometric operations used below: first the cut is described in the fixed initial coordinates,
then it is transported by the one-sided flow, then the normal fill is added, and finally the
outside piece and the fill are glued along the artificial cut face.

First, we analyze the cut face before applying the one-sided flow. The only delicate
point is its interaction with $\partial G_0$ near $\Gamma$. Away from an $O(s)$-neighborhood
of $\Gamma$, the artificial cut either lies in the relative interior of $G_0$, where it is just
a single Lipschitz graph, or it does not meet $G_0$. Thus no two-boundary transversality
issue arises there. Near a point of $\Gamma=\partial G_0\cap\iface_0$ choose
signed-distance coordinates $(y_1,y_2,y_3)$ associated with $\iface_0$, so that
$y_3=d(0,\xi)$ and $\iface_0$ is given by $y_3=0$. We work in a fixed coordinate
cylinder $U'\times(-\delta,\delta)$ in these coordinates, whose closure is contained in $U_{\rm tub}$ and which is chosen small enough that the
estimates below hold there.

The decomposition in \eqref{DBeq:cutfill-level-small-new} gives
$f_s^+=d(0,\cdot)+\mathcal R_s^+$. Hence, in these coordinates, the equation
$f_s^+(\xi)=Ms$ becomes
\begin{equation}\label{DBeq:cutfill-cut-equation-new}
        y_3+\mathcal R_s^+(y_1,y_2,y_3)=Ms .
\end{equation}
We call \eqref{DBeq:cutfill-cut-equation-new} the cut equation. Choose
$C_R>0$ such that
\[
        \|\mathcal R_s^+\|_{L^\infty(U'\times(-\delta,\delta))}\le C_Rs
\]
for all $0<s<T_0$. After decreasing $T_0$, if necessary, we may assume
\[
        (M+C_R)T_0<\delta
\]
and, using \(\operatorname{Lip}(\mathcal R_s^+;U_{\rm tub})\le Cs\),
that
\[
        |\mathcal R_s^+(y',z_1)-\mathcal R_s^+(y',z_2)|
        \le \frac12 |z_1-z_2|
\]
in the coordinate cylinder.

For fixed $s$ and $y'=(y_1,y_2)$, define
\[
        T_{s,y'}(z):=Ms-\mathcal R_s^+(y',z)
\]
on the closed interval
\[
        I_s:=[Ms-C_Rs,\,Ms+C_Rs].
\]
The preceding choice of $T_0$ ensures that $I_s\subset(-\delta,\delta)$. Moreover,
for $z\in I_s$,
\[
        |T_{s,y'}(z)-Ms|
        =
        |\mathcal R_s^+(y',z)|
        \le C_Rs,
\]
and hence $T_{s,y'}(I_s)\subset I_s$. Since the $y_3$-Lipschitz constant of
$\mathcal R_s^+$ is smaller than $1/2$, the map $T_{s,y'}$ is a contraction on $I_s$.
Banach's fixed point theorem therefore gives a unique solution of
\eqref{DBeq:cutfill-cut-equation-new} in $I_s$. 
Conversely, every solution of
\eqref{DBeq:cutfill-cut-equation-new} in the coordinate cylinder belongs to
$I_s$, by the bound $|\mathcal R_s^+|\le C_Rs$.
We write this solution as
\begin{equation}\label{DBeq:cutfill-cut-graph-new}
        y_3=Ms+\rho(s,y')
\end{equation}
and obtain
\begin{equation}\label{DBeq:cutfill-rho-estimates-new}
        |\rho(s,y')|\le Cs,
        \quad
        |\rho(s,y')-\rho(s,z')|\le Cs|y'-z'|,
        \quad
        |\rho(s,y')-\rho(\tau,y')|\le C|s-\tau| .
\end{equation}
Indeed, the first estimate follows from the fact that the fixed point belongs
to $I_s$. For the two Lipschitz estimates, write the fixed-point equation as
\[
        \rho(s,y')=-\mathcal R_s^+\bigl(y',Ms+\rho(s,y')\bigr).
\]
For fixed $s$ and $y',z'$, comparison of the two equations gives
\[
\begin{aligned}
|\rho(s,y')-\rho(s,z')|
&\le Cs|y'-z'|+\frac12|\rho(s,y')-\rho(s,z')|,
\end{aligned}
\]
and hence
\[
        |\rho(s,y')-\rho(s,z')|\le Cs|y'-z'|.
\]
Similarly, using the joint estimate \eqref{DBeq:cutfill-level-joint-lip-new},
for fixed $y'$ one obtains
\[
\begin{aligned}
|\rho(s,y')-\rho(\tau,y')|
&\le C|s-\tau|+CT_0\bigl(M|s-\tau|
       +|\rho(s,y')-\rho(\tau,y')|\bigr).
\end{aligned}
\]
After decreasing $T_0$, if necessary, the last term is absorbed into the
left-hand side, yielding
\[
        |\rho(s,y')-\rho(\tau,y')|\le C|s-\tau|.
\]

Thus $F_s^+$ is, in these signed-distance coordinates, represented by
\eqref{DBeq:cutfill-cut-graph-new}, with uniformly bounded spatial Lipschitz
constants and with Lipschitz dependence on $s$. Equivalently, the swept cut face
\[
        \{(s,\xi):\varepsilon<s<h,\ f_s^+(\xi)=Ms\}
\]
is locally a Lipschitz graph in the variables $(s,y')$.

It remains to incorporate the constraint $\xi\in G_0$. Let $q$ be the
signed-distance defining function for $\partial G_0$ introduced above. Along
$\Gamma=\partial G_0\cap\iface_0$, transversality gives
\[
        \nabla_{\iface_0}q\ne 0 .
\]
Fix $\zeta\in\Gamma$. In signed-distance coordinates $(y_1,y_2,y_3)$
associated with $\iface_0$, with $y_3=d(0,\xi)$, we may, after a rotation of the
tangential variables, assume that $\partial_{y_2}q(\zeta)\ne0$. Hence, after
shrinking the coordinate neighborhood, $\partial G_0$ is represented as
\[
        y_2=g(y_1,y_3)
\]
with $g$ of class $C^2$. By \eqref{DBeq:cutfill-cut-graph-new}, the cut face
$f_s^+=Ms$ is represented as a Lipschitz graph with spatial Lipschitz constant
$O(s)$. Therefore, after decreasing $T_0$ if
necessary, the cut face $f_s^+=Ms$ and $\partial G_0$ are uniformly transverse
near $\Gamma$.

By compactness of $\Gamma$, finitely many such neighborhoods suffice. In each of
them one may choose a fixed coordinate direction which is transverse both to
$\partial G_0$ and to all cut faces $f_s^+=Ms$, $0<s<T_0$. In the corresponding
coordinates $\eta=(\eta',\eta_3)$, the two faces are represented as
\[
        \eta_3=g(\eta'),\qquad \eta_3=k(s,\eta'),
\]
where $g$ is $C^2$, while $k$ is Lipschitz in $(s,\eta')$, with Lipschitz
constants bounded independently of $s$.

{
The local part of $G_0$ on which $f_s^+>Ms$ is the intersection of the inward
sides of these two graph faces. At points where only one face is active this is
a single Lipschitz graph domain. At an intersection point, orient both unit
normals inward. Uniform transversality and compactness give, after fixing the
finite cover, a spatial unit vector $e$ on each chart and $c_0>0$ such that
$e\cdot\nu_j\ge c_0$ for every a.e. inward unit normal of either active face,
uniformly in $s$: take $e$ in the direction of the sum of the two reference
normals at the chart center and use continuity of the normal to $\partial G_0$
and the $O(s)$ slope of the cut graph. Thus the corresponding local defining
functions, with inward side positive, are uniformly strictly increasing on
lines parallel to $e$. Each face is therefore a Lipschitz graph over $e^\perp$
and the two inward sides are simultaneous graph inequalities, with uniform
constants. Lemma~\ref{lem:uniform-finite-face-gluing} applies. Since $k$ is
Lipschitz in $(s,\eta')$, the same $e$ gives uniform space-time lateral graphs.
At an artificial time face with inward unit time normal $\tau$, a fixed
$e+\beta\tau$, $\beta>0$ sufficiently small, is a common graph direction for
the time face and all active lateral faces. Hence the lemma also gives Lipschitz
graph representations for the space-time set
}
\[
        E_{\varepsilon,h}^+
        :=\{(s,\xi):\varepsilon<s<h,\ \xi\in G_0,\ f_s^+(\xi)>Ms\}.
\]

Away from an $O(s)$-neighborhood of $\Gamma$, the constraint $\xi\in G_0$ and
the inequality $f_s^+>Ms$ do not interact. There, either the outer boundary
$\partial G_0$ is absent from the local chart, or the sign of $f_s^+-Ms$ is
fixed for all sufficiently small $s$. Thus only a single Lipschitz graph face has to be
considered. A finite covering therefore shows that the sets $E_s^+$ are
Lipschitz domains with uniform graph constants, and that $E_{\varepsilon,h}^+$
is a Lipschitz domain in space-time.

Moreover, the local graph constants obtained above give a cone opening angle which is
uniform for $0<s<T_0$ and for $0<\varepsilon<h<T_0$. The cone height for the
space-time sets may depend on $\varepsilon$ and $h$, but this is harmless: on each fixed
interval $(\varepsilon,h)$ the height is positive, and the small-perturbation threshold in
Theorem~\ref{DBthm:small-biLip-perturbation-cone} depends only on the opening angle.
Hence this threshold can be chosen independently of $s$, $\varepsilon$, and $h$.

Second, we pass from the preimage sets to the transported outside pieces. Under
the present hypotheses the maps $X_s^+$ are bi-Lipschitz flow maps, but they
need not be $C^1$-diffeomorphisms, since $\widetilde{\DBv}^{\,+}$ is only
Lipschitz in the spatial variable. We therefore use the small-perturbation
stability of Lipschitz domains from Theorem~\ref{DBthm:small-biLip-perturbation-cone}.
For fixed $s$, the map $X_s^+=\operatorname{Id}+(X_s^+-\operatorname{Id})$ is a
Lipschitz perturbation of the identity with perturbation constant $O(s)$ by
\eqref{DBeq:cutfill-flow-small-new}. For small $T_0$ this is below the uniform
perturbation threshold from Theorem~\ref{DBthm:small-biLip-perturbation-cone}. Hence
$A_s^+=X_s^+(E_s^+)$ is a compact Lipschitz domain, in the boundary convention fixed
above.

The corresponding space-time tube is also needed. Fix an auxiliary scale $a>0$ and write
$s=a\sigma$. On the rescaled cylinder consider\footnote{For more details concerning this scaling argument see the proof of Corollary~\ref{cor:flow-map-rtt}.}
\[
        \mathcal X_a(\sigma,\xi):=(\sigma,X_{a\sigma}^+(\xi)).
\]
Since \(\widetilde{\DBv}^{\,+}\) is bounded and
\eqref{DBeq:cutfill-flow-small-new} holds uniformly,
\[
\big| (X_{a\sigma}^+(\xi)-\xi)
      -(X_{a\tau}^+(\eta)-\eta) \big|
\le Ca|\sigma-\tau|+Ch|\xi-\eta|
\]
for $a\sigma,a\tau\in(\varepsilon,h)$. Thus
$\mathcal X_a-\operatorname{Id}$ has Lipschitz constant at most $C(a+h)$.
Under the rescaling $s=a\sigma$, the spatial graph constants of the preimage
domains are unchanged, while the graph slopes in the time variable are multiplied
by $a$; the product-type corners at the artificial time faces remain uniformly
Lipschitz. Hence, for $0<a\le a_0$, the small-perturbation threshold in
Theorem~\ref{DBthm:small-biLip-perturbation-cone} can be bounded from below
independently of $a$, $\varepsilon$ and $h$. Choose first $a>0$ sufficiently small
and then decrease $T_0$ once more so that $C(a+T_0)$ is below this threshold.
Then the image of the rescaled space-time domain is Lipschitz for every
$0<\varepsilon<h<T_0$. Scaling time back by the $C^1$ diffeomorphism
$(\sigma,\DBx)\mapsto(a\sigma,\DBx)$ preserves Lipschitz regularity. Hence the
swept closed tube of the sets $A_s^+$ is a compact Lipschitz domain.
The graph construction also shows that its boundary consists, up to null sets, of the two
time-slice faces and the lateral faces described above.

Third, we study the cut face and the fill. The set $F_s^+$ is obtained by
restricting the cut graph \eqref{DBeq:cutfill-cut-graph-new} to $G_0$. Hence
$F_s^+$ is a Lipschitz surface domain. Its relative boundary is contained in
the intersection of this cut graph with $\partial G_0$, and the preceding finite
graph representations give a uniform bound for its length. Its image
$P_s^+=X_s^+(F_s^+)$ is again a Lipschitz surface
domain. Since $P_s^+$ lies on the level $d(s,\cdot)=Ms$, the projection
$\pi_s:P_s^+\to\iface(s)$ is a bi-Lipschitz $C^1$ perturbation of the identity, uniformly
for small $s$. Hence $B_s^+=\pi_s(P_s^+)$ is a compact Lipschitz surface domain on
$\iface(s)$, and both $\mathcal H^2(B_s^+)$ and $\mathcal H^1(\partial B_s^+)$ are
uniformly bounded. For the swept bases, one composes the joint Lipschitz graph
representation of the cut faces with the $C^1$ map $(s,\DBx)\mapsto
(s,\pi_s(\DBx))$. On the levels $d(s,\DBx)=Ms$ this projection is uniformly
bi-Lipschitz for small $s$, because it is a $C^1$ perturbation of the identity in
the signed-distance coordinates. Thus the swept bases also have Lipschitz space-time
representations.

The signed-distance parametrization
$(\mathbf p,r)\mapsto\mathbf p+r\DBn_\Sigma(s,\mathbf p)$ is a $C^1$ diffeomorphism for
$|r|<r_*$. Therefore the closed fill $C_s^+$ is a compact Lipschitz domain. Its height is
$Ms$, so its cone height may degenerate as $s\downarrow0$, but its graph slopes and the
area estimates below are uniform. In finitely many surface charts the swept bases are therefore represented by
Lipschitz graph domains
\[
        \mathcal B_{\varepsilon,h}^+
        :=\{(s,\mathbf p):\varepsilon<s<h,
        \ \mathbf p\in B_s^+\}
        \subset \gr(\iface).
\]
In the same surface charts the swept fill is the image of the Lipschitz set
\[
        \{(s,z,r):(s,z)\in \mathcal B_{\varepsilon,h}^+,
        \ 0\le r\le Ms\}
\]
under the local $C^1$ diffeomorphism
$(s,\mathbf p,r)\mapsto(s,\mathbf p+r\DBn_\Sigma(s,\mathbf p))$. Since
$\varepsilon<s<h$, the height is strictly positive on the considered interval, and the
graph representations remain Lipschitz. Hence the swept closed fill is a compact
Lipschitz domain, again with the natural time-slice/lateral boundary decomposition up to
null sets.

Fourth, we form the union of the outside piece and the fill. The sets
$A_s^+$ and $C_s^+$ meet only along the common face $P_s^+$. In the
signed-distance coordinates $(\DBp,r)$ associated with \(\iface(s)\), with
$r=d(s,\DBx)$, this face is given by
$P_s^+=\{\DBp+Ms\,\DBn_\Sigma(s,\DBp):\DBp\in B_s^+\}$. Over relative interior
points of $B_s^+$, the fill is represented by
$\{(\DBp,r):0\le r\le Ms\}$, whereas the outside piece is represented by
$\{(\DBp,r):r\ge Ms\}$. Hence their union is locally represented by
$\{(\DBp,r):r\ge0\}$. Thus the common face $P_s^+$ is not part of the boundary
of $\widetilde G_s^+=A_s^+\cup C_s^+$, except along its edge over
$\partial_{\iface(s)}B_s^+$.

{
There are two types of edges of the resulting boundary. Along the lower edge
$r=0$, $\mathbf p\in\partial_{\iface(s)}B_s^+$, the interface face and the
normal lateral face form a product corner in signed-distance and
surface-boundary coordinates. Along the upper edge
$r=Ms$, the common cut face has disappeared and the normal lateral face meets
the material or cut-generated face. In the local coordinates constructed in
the first part, the latter two faces have the form
\[
        \eta_2=b_s(\eta_1),\qquad
        \eta_2=a_s(\eta_1,r),\quad r\ge Ms,
        \qquad a_s(\eta_1,Ms)=b_s(\eta_1),
\]
with uniform Lipschitz bounds, and the union lies on the same side of both
faces. Thus the active inward sides admit a common spatial graph direction at
both types of edge. Compactness and the finite atlas make the corresponding
graph bounds uniform. By the boundary convention above, $\widetilde G_s^+$ is the closure of the regularized union of the corresponding open outside and fill domains. Lemma~\ref{lem:uniform-finite-face-gluing} therefore
shows that $\widetilde G_s^+$ is a compact Lipschitz domain.

The same common spatial graph directions work after $s$ is added as a tangential
variable. The swept common cut face is an interior hypersurface of the union of
the swept outside tube and the swept fill tube, except along its swept edge;
there the remaining lateral faces satisfy the finite-face criterion of
Lemma~\ref{lem:uniform-finite-face-gluing}. At the time edges, the same
$e+\beta\tau$ construction gives a common graph direction uniformly in
$\varepsilon$ and $h$. Hence the swept closed tube of
$\widetilde G_s^+$ is a compact regular closed Lipschitz domain with the
natural time-slice/lateral boundary decomposition, up to $\mathcal H^3$-null sets.
Moreover, the swept normal lateral face is locally of the form
$y_3=\ell(s,y')$ with $y_3$ a spatial coordinate and with
$\operatorname{Lip}_s\ell\le C$. The last assertion of
Lemma~\ref{lem:uniform-finite-face-gluing} gives
$|V_{L,s}^+|\le C$ almost everywhere away from the edge sets. This proves
Claim~4.
}

\vskip2mm
\emph{Step 5: comparison with the true co-moving set in the strip.}
\vskip1mm
The bases \(B_s^\pm\) are obtained by projecting the artificial cut faces \(P_s^\pm\) to the interface. Claim~2 therefore implies that these bases can differ from \(D_s\) only in the intrinsic boundary layer of width \(O(s)\).
From Claim 2 and the definitions of the bases, after increasing $\omega$ if necessary,\vspace{-0.05in}
\begin{equation}\label{DBeq:cutfill-base-inclusion-boundary-new}
        D_s^{-\omega s}\subset B_s^\pm\subset D_s^{+\omega s}.
\end{equation}
For the first inclusion, use \eqref{DBeq:cutfill-inner-strip-boundary-new} on the level
$r=\pm Ms$; for the second, use \eqref{DBeq:cutfill-outer-strip-boundary-new}. Combining
\eqref{DBeq:cutfill-base-inclusion-boundary-new} with
\eqref{DBeq:cutfill-layer-boundary-new} gives
\begin{equation}\label{DBeq:cutfill-base-symdiff-new}
        \mathcal H^2(B_s^\pm\triangle D_s)\le Cs .
\end{equation}
Outside the strip \(|d(s,\cdot)|<Ms\), the set
\(\widetilde G_s^+\cup\widetilde G_s^-\) agrees with \(G(s)\) up to null sets. Inside
the strip, Claim~2 and \eqref{DBeq:cutfill-base-inclusion-boundary-new} imply,
again up to null sets,
\[
        G(s)\triangle(\widetilde G_s^+\cup\widetilde G_s^-)
        \subset
        \{\DBp+r\DBn_\Sigma(s,\DBp):
        \DBp\in D_s^{+\omega s}\setminus D_s^{-\omega s},\ |r|\le Ms\} .
\]
Since the signed-distance parametrization has uniformly bounded Jacobian on
\(\mathcal N_{K,r_*}(s)\), \eqref{DBeq:cutfill-layer-boundary-new} gives
\begin{equation}\label{DBeq:cutfill-symdiff-boundary-new}
        |G(s)\triangle(\widetilde G_s^+\cup\widetilde G_s^-)|
        \le Cs\,\mathcal H^2(D_s^{+\omega s}\setminus D_s^{-\omega s})\le Cs^2 .
\end{equation}
Consequently, since $\phi$ is bounded,
\begin{equation}\label{DBeq:cutfill-symdiff-integral-boundary-new}
        \int_{G(h)}\phi(h,\DBx)d\DBx
        -\sum_\pm\int_{\widetilde G_h^\pm}\phi^\pm(h,\DBx)d\DBx
        =O(h^2).
\end{equation}

\vskip2mm
\emph{Step 6: application of the boundary form of the RTT to the modified sets.}
\vskip1mm
Fix $0<\varepsilon<h<T_0$. 
Here the sets are used with the boundary convention from Step~4 only in order
to apply the boundary-form RTT. This does not change any volume integral, since
only Lipschitz faces of zero three-dimensional measure are added. By Claim~4,
the swept tubes of $\widetilde G_s^\pm$ over $(\varepsilon,h)$ are compact
Lipschitz domains with the natural time-slice/lateral boundary decomposition,
up to null sets. Moreover, on the lateral pieces one has $\DBn_M^\DBx\ne0$ except
possibly on edge sets of codimension two, which do not contribute to the
surface integrals. Hence Theorem~\ref{thm:single-phase-boundary-rtt} applies to
$\widetilde G_s^\pm$ on every interval $(\varepsilon,h)$.

The common cut face \(P_s^\pm\) is an interior face of
\(\widetilde G_s^\pm=A_s^\pm\cup C_s^\pm\), except along edges. Thus the
lateral boundary relevant for the RTT consists, up to null sets, of the
material part \(\Lambda_s^\pm\), the interface base \(B_s^\pm\), and the normal
lateral face
\[
        L_s^\pm
        :=\{\DBp\pm r\DBn_\Sigma(s,\DBp):
        \DBp\in\partial_{\iface(s)}B_s^\pm,
        \ 0\le r\le Ms\}.
\]
On \(\Lambda_s^\pm\) the normal velocity is
\(\DBv^\pm\cdot\DBn_{A_s^\pm}\). On \(B_s^\pm\) it is \(\mp V_\Sigma\), because
the outer normal of \(\widetilde G_s^\pm\) on the interface is
\(\mp\DBn_\Sigma\). Let \(V_{L,s}^\pm\) denote the normal velocity of \(L_s^\pm\),
with the sign convention of Theorem~\ref{thm:single-phase-boundary-rtt}.

The local parametrizations from Claim~4 give
\[
        \mathcal H^2(L_s^\pm)
        \le Cs\,\mathcal H^1(\partial_{\iface(s)}B_s^\pm)
        \le Cs,
        \qquad |C_s^\pm|=O(s),
\]
{and Claim~4 gives $|V_{L,s}^\pm|\le C$ almost everywhere on the
corresponding lateral space-time graphs.} Hence
\[
        R_\pm(s):=
        \int_{C_s^\pm}\partial_t\phi^\pm\,d\DBx
        +\int_{L_s^\pm}\phi^\pm V_{L,s}^\pm\,dS
\]
is measurable on \((\varepsilon,h)\) and satisfies, independently of
\(\varepsilon\),
\[
        |R_\pm(s)|\le Cs
\]
for a.e. \(s\in(\varepsilon,h)\). Applying the boundary-form RTT to
\(\widetilde G_s^\pm\) gives, for a.e. \(s\in(\varepsilon,h)\),
\begin{equation}\label{DBeq:cutfill-pm-boundary-new}
\frac{d}{ds}\int_{\widetilde G_s^\pm}\phi^\pm d\DBx
=\int_{A_s^\pm}\partial_t\phi^\pm d\DBx
 +\int_{\Lambda_s^\pm}\phi^\pm\DBv^\pm\cdot\DBn_{A_s^\pm}dS
 \mp\int_{B_s^\pm}\phi^\pm V_\Sigma dS
 +R_\pm(s).
\end{equation}

Integrate \eqref{DBeq:cutfill-pm-boundary-new} for both signs from
\(\varepsilon\) to \(h\) and then let \(\varepsilon\downarrow0\). The initial terms converge to
$\int_{G_0\cap\Omega_0^\pm}\phi^\pm(0,\DBx)d\DBx$, while the fill volumes vanish as
$s\downarrow0$. Moreover,
\[
        \int_0^h |R_+(s)|+|R_-(s)|\,ds\le Ch^2.
\]
Using \eqref{DBeq:cutfill-symdiff-integral-boundary-new}, and replacing
$A_s^\pm$ by $G(s)\cap\Omega^\pm(s)$ in the volume integral at an integrated error
$O(h^2)$, this gives
\begin{align}\label{DBeq:cutfill-main-integrated-boundary-new}
&\int_{G(h)}\phi(h,\DBx)d\DBx-
  \int_{G_0}\phi(0,\DBx)d\DBx =
\int_0^h\int_{G(s)\setminus\iface(s)}\partial_t\phi\,d\DBx ds \nonumber\\
&\qquad+
\int_0^h\left(
\int_{\Lambda_s^+}\phi^+\DBv^+\cdot\DBn_{A_s^+}dS
+
\int_{\Lambda_s^-}\phi^-\DBv^-\cdot\DBn_{A_s^-}dS
\right)ds \\
&\qquad-
\int_0^h\int_{B_s^+}\phi^+V_\Sigma dS\,ds
+
\int_0^h\int_{B_s^-}\phi^-V_\Sigma dS\,ds
+O(h^2). \nonumber
\end{align}
The replacement of $A_s^\pm$ by $G(s)\cap\Omega^\pm(s)$ is legitimate because the
symmetric difference is contained in an interfacial strip of thickness $O(s)$ over a
uniformly bounded surface set, and hence has volume $O(s)$. Since
$\partial_t\phi^\pm$ is locally bounded, the resulting integrated error is $O(h^2)$.

\vskip2mm
\emph{Step 7: passage to the initial time for the right derivative.}
\vskip1mm
First, \eqref{DBeq:cutfill-outer-reach-boundary-new} gives
$G(s)\subset G_0+\overline B_{Ms}(0)$. Conversely, let
$\DBx\in G_0$ satisfy $\operatorname{dist}(\DBx,G_0^\complement)>2Ms$. Choose a local
solution branch of the differential inclusion which has value $\DBx$ at time $s$ and is
defined backwards to time $0$. Its time-zero value lies within distance at most
$ms<Ms$ from $\DBx$ and therefore belongs to $G_0$. Viewed on $[0,s]$, this branch is
an admissible solution from $G_0$ to $\DBx$, and hence $\DBx\in G(s)$. Thus
$G(s)\triangle G_0$ is contained in an $O(s)$-neighborhood of $\partial G_0$. Since
$\iface(s)$ converges to $\iface_0$ in the signed-distance coordinates on $\mathcal N_{K,r_*}$, it follows that
\[
        \chi_{G(s)\cap\Omega^\pm(s)}\to
        \chi_{G_0\cap\Omega_0^\pm}
        \qquad\hbox{a.e. in }K
        \quad\mbox{as }s\to0+,
\]
the exceptional set being contained in $\partial G_0\cup\iface_0$. Dominated convergence
therefore yields
\begin{equation}\label{DBeq:cutfill-volume-limit-new}
\int_{G(s)\setminus\iface(s)}\partial_t\phi\,d\DBx
\to
\int_{G_0\setminus\iface_0}\partial_t\phi(0,\DBx)d\DBx
\quad\mbox{as }s\to0+ .
\end{equation}

Second, consider the material boundary terms. By Claim~3, together with the
fixed-domain boundary description used in Claim~4, $\Lambda_s^\pm=X_s^\pm(S_s^\pm)$,
where $S_s^\pm$ is the part of $\partial G_0\cap\Omega_0^\pm$ which is not cut off by
$f_s^\pm=\pm Ms$. The removed part is contained in an $O(s)$-neighborhood of
$\Gamma=\partial G_0\cap\iface_0$ on the $C^2$ surface $\partial G_0$, hence has
surface area $O(s)$.

We orient $S_s^\pm\subset\partial G_0$ by the outer normal $\DBn$ of $G_0$.
For small $s$, the restriction $X_s^\pm|_{S_s^\pm}$ is bi-Lipschitz and transports this
orientation to the material face $\Lambda_s^\pm$. This transported orientation agrees
with the measure-theoretic outer orientation of $A_s^\pm$ on $\Lambda_s^\pm$, except on
edge sets of zero surface measure, since $X_s^\pm-\Id$ has Lipschitz constant $O(s)$.
Let $\DBn_s^\pm$ denote the corresponding transported outer unit normal, pulled back to
$S_s^\pm$, and let $J_s^\pm$ denote the tangential surface Jacobian of $X_s^\pm$ on
$S_s^\pm$, in the sense of Proposition~\ref{prop:surface-area-formula-lipschitz}.
The area formula for Lipschitz maps gives
\begin{equation*}
\int_{\Lambda_s^\pm}\phi^\pm\DBv^\pm\cdot\DBn_{A_s^\pm}\,dS
=
\int_{S_s^\pm}
\phi^\pm(s,X_s^\pm(\xi))\,
\DBv^\pm(s,X_s^\pm(\xi))\cdot\DBn_s^\pm(\xi)
J_s^\pm(\xi)\,dS_\xi .
\end{equation*}
Since $X_s^\pm-\Id$ has Lipschitz constant $O(s)$, the estimates in
Proposition~\ref{prop:surface-area-formula-lipschitz} yield $J_s^\pm=1+O(s)$ and
$\DBn_s^\pm=\DBn+O(s)$ a.e. The uniform convergence $X_s^\pm\to\Id$ and the one-sided
continuity of $\phi^\pm$ and $\DBv^\pm$ now imply
\begin{equation}\label{DBeq:cutfill-material-boundary-limit-new}
\int_{\Lambda_s^\pm}\phi^\pm \DBv^\pm \cdot\DBn_{A_s^\pm}\,dS
\to
\int_{\partial G_0\cap\Omega_0^\pm}\phi^\pm \DBv^\pm \cdot\DBn\,dS
\quad\mbox{as }s\to0+ .
\end{equation}
All quantities on the right are evaluated at time $0$.

Third, \eqref{DBeq:cutfill-base-symdiff-new} and the continuity on
$\gr(\iface)$ of the one-sided traces
$(s,\DBp)\mapsto\phi^\pm(s,\DBp)V_\Sigma(s,\DBp)$ yield
\[
        \int_{B_s^\pm}\phi^\pm V_\Sigma dS
        -\int_{D_s}\phi^\pm V_\Sigma dS\to0
        \quad\mbox{as }s\to0+ .
\]
The surface change of variables under the $C^1$ diffeomorphism
$\Psi_s:D_0\to D_s$ has Jacobian converging uniformly to $1$, and the pulled-back
integrands converge uniformly on $D_0$. Hence
\begin{equation}\label{DBeq:cutfill-interface-limit-new}
\int_{B_s^+}\phi^+V_\Sigma dS \to
\int_{D_0}\phi^+V_\Sigma dS,
\qquad
\int_{B_s^-}\phi^-V_\Sigma dS \to
\int_{D_0}\phi^-V_\Sigma dS
\quad\mbox{as }s\to0+ .
\end{equation}

Consequently, after division by \(h\), the time averages in
\eqref{DBeq:cutfill-main-integrated-boundary-new} converge to the corresponding
initial-time integrals.

Divide \eqref{DBeq:cutfill-main-integrated-boundary-new} by $h$ and let
$h\downarrow0$. The limits
\eqref{DBeq:cutfill-volume-limit-new}--\eqref{DBeq:cutfill-interface-limit-new}
show, since $D_0=G_0\cap\iface_0$, that the right derivative equals, after translating back time, the right-hand side in
\eqref{DBeq:boundary-transport-theorem-comoving}.

\vskip2mm
\emph{Left derivative and conclusion.}
\vskip1mm
Apply the right-derivative formula just proved to the time-reversed data
\[
        \widetilde{\iface}(s):=\iface(-s),\qquad
        \widetilde{\phi}(s,\DBx):=\phi(-s,\DBx),\qquad
        \widetilde{\DBv}^{\,\pm}(s,\DBx):=-\DBv^\pm(-s,\DBx).
\]
For the reversed problem, the co-moving set at time $s>0$ is $G(-s)$, while
$\partial_t\phi$, $\DBv$, and $V_\Sigma$ all change sign. Hence the negative of
the left derivative equals the negative of the right-hand side in
\eqref{DBeq:boundary-transport-theorem-comoving}. The two one-sided derivatives
therefore coincide. Translating time back gives
\eqref{DBeq:boundary-transport-theorem-comoving} at the original time $t_0$, with
all integrands on the right-hand side evaluated at $t_0$.
\end{proof}

\begin{corollary}[Divergence form]\label{DBcor:2ph-transport-theorem-divergence-form}
In the situation of Theorem~\ref{DBthm:2ph-transport-theorem-comovingCV}, assume in addition that, at the time \(t_0\),
the phasewise vector fields \(\phi^\pm\DBv^\pm\) belong to
\[
  W^{1,1}\bigl( G_0^\circ \cap\Omega^\pm(t_0);\R^n\bigr).
\]
Let \(\DBv^\Sigma\) be an
interfacial velocity field satisfying
$\DBv^\Sigma\cdot\DBn_\Sigma=V_\Sigma$ on $\Sigma(t_0)$. Then
\begin{equation}\label{DBeq:two-phase-divergence-form}
  \left[
  \frac{d}{dt}\int_{G(t)}\phi\,d\DBx
  \right]_{t=t_0}
  =
  \int_{G_0\setminus\Sigma_0}
  \bigl(\partial_t\phi+\operatorname{div}(\phi\DBv)\bigr)\,d\DBx
  +
  \int_{\Sigma_0\cap G_0}
  [\![\phi(\DBv-\DBv^\Sigma)\cdot\DBn_\Sigma]\!]\,dS,
\end{equation}
where the integrands on the right-hand side are evaluated at $t=t_0$.
\end{corollary}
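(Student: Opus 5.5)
The corollary follows from Theorem~\ref{DBthm:2ph-transport-theorem-comovingCV} by rewriting the material boundary term with the phasewise divergence theorem and then collecting the interface contributions into one jump bracket. All quantities are evaluated at $t=t_0$.

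\emph{Phasewise domains.} Set $U^\pm:=G_0^\circ\cap\Omega^\pm(t_0)$. Since $\partial G_0$ is $C^2$, $\Sigma_0$ is $C^2$, and the two meet transversally along $\Gamma$, each $U^\pm$ is a bounded Lipschitz domain. Near a point of $\Gamma$ one uses the local coordinates of Claim~4 in the proof of the theorem, where both faces are graphs over a common transverse direction. Lemma~\ref{lem:uniform-finite-face-gluing} then gives the Lipschitz property. The boundary of $U^\pm$ decomposes, up to the $\mathcal H^2$-null curve $\Gamma$, into $\partial G_0\cap\Omega^\pm(t_0)$ and $G_0\cap\Sigma_0$. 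On the first piece the outer normal is $\DBn$. On the second piece it is $\mp\DBn_\Sigma$, with $\DBn_\Sigma$ pointing from $\Omega^-$ into $\Omega^+$ as in the proof of the theorem.

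\emph{Applying the divergence theorem.} By hypothesis $\phi^\pm\DBv^\pm\in W^{1,1}(U^\pm;\R^n)$. The Sobolev Gauss--Green formula on bounded Lipschitz domains, as used in the proof of Theorem~\ref{thm:single-phase-boundary-rtt}, gives
\[
\int_{U^\pm}\operatorname{div}(\phi^\pm\DBv^\pm)\,d\DBx
=\int_{\partial G_0\cap\Omega^\pm(t_0)}\phi^\pm\DBv^\pm\cdot\DBn\,dS
\mp\int_{G_0\cap\Sigma_0}\phi^\pm\DBv^\pm\cdot\DBn_\Sigma\,dS .
\]
The traces here agree with the pointwise one-sided values, because $\phi^\pm$ and $\DBv^\pm$ are continuous up to $\overline{\Omega^\pm}$. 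Adding the two signs produces the material boundary term $\int_{\partial G_0\setminus\Sigma_0}\phi\,\DBv\cdot\DBn\,dS$ of \eqref{DBeq:boundary-transport-theorem-comoving}, together with an extra interface term:
\[
\int_{\partial G_0\setminus\Sigma_0}\phi\,\DBv\cdot\DBn\,dS
=\int_{G_0\setminus\Sigma_0}\operatorname{div}(\phi\DBv)\,d\DBx
+\int_{G_0\cap\Sigma_0}[\![\phi\,\DBv\cdot\DBn_\Sigma]\!]\,dS .
\]
Here the jump bracket is $(+)$ minus $(-)$, consistent with \eqref{jump-bracket}.

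\emph{Collecting the interface terms.} Substitute the last identity into \eqref{DBeq:boundary-transport-theorem-comoving}. Then use $V_\Sigma=\DBv^\Sigma\cdot\DBn_\Sigma$, where $\DBv^\Sigma$ is single-valued on $\Sigma_0$ and therefore has no jump:
\[
[\![\phi\,\DBv\cdot\DBn_\Sigma]\!]-[\![\phi]\!]V_\Sigma=[\![\phi(\DBv-\DBv^\Sigma)\cdot\DBn_\Sigma]\!].
\]
This yields \eqref{DBeq:two-phase-divergence-form}. Finally, both sides of the identity are invariant under reversing the orientation of $\DBn_\Sigma$, so the choice of orientation made above is harmless.

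\emph{Main obstacle.} The only point needing care is the first step: that $U^\pm$ are Lipschitz domains whose boundary splits as described, up to the curve $\Gamma$. This is exactly where the transversality assumption is used. The rest is the algebra of the previous paragraph.
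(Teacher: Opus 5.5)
Your proposal is correct and follows the paper's proof. The phasewise Sobolev Gauss--Green formula on the Lipschitz domains $G_0^\circ\cap\Omega^\pm(t_0)$ turns the material boundary term of \eqref{DBeq:boundary-transport-theorem-comoving} into the bulk divergence plus $\int_{G_0\cap\Sigma_0}[\![\phi\,\DBv\cdot\DBn_\Sigma]\!]\,dS$, which then combines with $-[\![\phi]\!]V_\Sigma=-[\![\phi\,\DBv^\Sigma\cdot\DBn_\Sigma]\!]$. Your additional remarks only make explicit what the paper compresses into ``by the transversality assumption'': a common graph direction at $\Gamma$ from the two non-colinear inward normals, and the identification of the Sobolev traces with the continuous one-sided values.
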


\begin{proof}
By the transversality assumption, $G_0^\circ \cap\Omega^\pm(t_0)$
are bounded Lipschitz domains. Applying the Sobolev Gauss--Green formula
phasewise gives
\[
  \sum_\pm
  \int_{\partial G_0\cap\Omega^\pm_0}
  \phi^\pm\DBv^\pm\cdot\DBn\,dS
  =
  \int_{G_0\setminus\Sigma_0}
  \operatorname{div}(\phi\DBv)\,d\DBx  
-\sum_\pm
  \int_{G_0\cap\Sigma_0}
  \phi^\pm \DBv^\pm\cdot\DBn^\pm\,dS.
\]
Substituting this identity into the boundary form (\ref{DBeq:boundary-transport-theorem-comoving}) and using
$V_\Sigma=\DBv^\Sigma\cdot\DBn_\Sigma$ yields
\[
\begin{aligned}
  \left[
  \frac{d}{dt}\int_{G(t)}\phi\,d\DBx
  \right]_{t=t_0}
  &=
  \int_{G_0\setminus\Sigma_0}
  \bigl(\partial_t\phi+\operatorname{div}(\phi\DBv)\bigr)\,d\DBx  \\
  &\quad -
  \int_{G_0\cap\Sigma_0}
  \Bigl(
    \phi^+\DBv^+\cdot\DBn^+
    +
    \phi^-\DBv^-\cdot\DBn^-
    +
    [\![\phi]\!]\DBv^\Sigma\cdot\DBn_\Sigma
  \Bigr)\,dS  \\
  &=
  \int_{G_0\setminus\Sigma_0}
  \bigl(\partial_t\phi+\operatorname{div}(\phi\DBv)\bigr)\,d\DBx
  +
  \int_{\Sigma_0\cap G_0}
  [\![\phi(\DBv-\DBv^\Sigma)\cdot\DBn_\Sigma]\!]\,dS,
\end{aligned}
\]
where all integrands are evaluated at $t=t_0$.
This proves the assertion.
\end{proof}

We next give a simple application of Corollary~\ref{DBcor:2ph-transport-theorem-divergence-form}.
In general, volume is not conserved within fluid flows, but volume changes are often negligible in flows of liquid or for flows at low Mach numbers, i.e.\ where the velocity is small compared with the speed of sound.
Recall that flows which conserve the volume content of co-moving volumes are called \textit{isochoric}.
In the present attainable-set setting we use the corresponding instantaneous notion: the flow is called isochoric if, at every reference time $t_0$, every admissible co-moving volume emanating from an admissible $G_0$ has vanishing volume derivative at $t=t_0$. Let $G(t)=\hat\Phi^t_{t_0}(G_0)$ be such a co-moving volume. If the additional assumptions of Corollary~\ref{DBcor:2ph-transport-theorem-divergence-form} are fulfilled for $\phi\equiv1$, then
\[
\frac{d}{dt}\int_{G(t)}1\,d\DBx
=\int_{G(t)\setminus\iface(t)}\DBdiv\DBv\,d\DBx
+\int_{\iface(t)\cap G(t)}[\![(\DBv-\DBv^\iface)\cdot\DBn_\Sigma]\!]\,dS
\quad\mbox{at }t=t_0.
\]
Hence, using the localization principle as above, a two-phase flow is isochoric if and only if
\begin{equation}\label{DBeq:isochoric-volume-transmission}
\DBdiv\DBv=0 \; \mbox{ a.e. in } \Omega\setminus\iface
\quad\mbox{and}\quad
[\![(\DBv-\DBv^\iface)\cdot\DBn_\Sigma]\!]=0 \; \mbox{ on }\iface.
\end{equation}
Indeed, the local conditions imply a vanishing instantaneous volume rate for every admissible co-moving set at an arbitrary reference time, directly by the divergence form. Conversely, one applies the identity at an arbitrary reference time to admissible initial volumes shrinking to bulk Lebesgue points, and to transversally cutting volumes shrinking to arbitrary interfacial points, to obtain the two local conditions in (\ref{DBeq:isochoric-volume-transmission}).
While (\ref{DBeq:isochoric-volume-transmission})$_1$ requires the velocity field to be solenoidal, the jump condition (\ref{DBeq:isochoric-volume-transmission})$_2$ is equivalent to $[\![\DBv\cdot\DBn_\Sigma]\!]=0$, i.e.\ to continuity of the normal velocity at $\iface$. To understand how the latter property is related to phase change, we expand the term inside the jump bracket in (\ref{DBeq:isochoric-volume-transmission})$_2$ by $\rho/\rho$. Then, using equality of the one-sided mass transfer fluxes due to (\ref{E3}), we can rewrite (\ref{DBeq:isochoric-volume-transmission})$_2$ as
\begin{equation}\label{DBeq:isochoric-volume-transmission2}
\dot m\,[\![\rho^{-1}]\!]=0 \; \mbox{ on }\iface.
\end{equation}
{
Consequently, isochoricity in the above sense requires the flow fields
to be solenoidal in both phases and, at every point of $\iface$, either
$\dot m=0$ or $[\![\rho]\!]=0$. Equivalently, phase change is excluded on
every subset of the interface on which the two mass densities differ. Continuity
of the mass density across a genuine two-phase interface is very unlikely in
real-world fluid systems.
}
\vskip3mm
\noindent
We also record the following variant of the divergence form with built-in mass conservation.
\begin{corollary}\label{DBcor:2ph-transport-theorem-comovingCV-built-in-mass}
In the situation of Theorem~\ref{DBthm:2ph-transport-theorem-comovingCV} and Corollary~\ref{DBcor:2ph-transport-theorem-divergence-form}, assume that $\rho\psi$ satisfies the regularity and phasewise divergence assumptions imposed there. Assume moreover that $\rho$, $\psi$ and $\DBv$ have enough phasewise regularity for the product rule below to hold in $L^1$ on the two bulk parts; for instance, this is the case under the corresponding phasewise $C^1$ assumptions. Assume also that $\rho$ satisfies the bulk continuity equation
\[
        \partial_t\rho+\DBdiv(\rho\DBv)=0
        \qquad\mbox{in }\Omega\setminus\iface
\]
and the interfacial mass balance
\[
        [\![\rho(\DBv-\DBv^\iface)\cdot\DBn_\Sigma]\!]=0
        \qquad\mbox{on }\iface.
\]
With $\dot m:=\rho^+(\DBv^+-\DBv^\iface)\cdot\DBn_\Sigma
        =\rho^-(\DBv^- -\DBv^\iface)\cdot\DBn_\Sigma$ on $\iface$, and with the phasewise material derivative defined by
\[
        \frac{D\psi}{Dt}:=\partial_t\psi^\pm+\DBv^\pm\cdot\nabla\psi^\pm
        \qquad\mbox{in }\Omega^\pm,
\]
it holds that
\begin{equation}\label{DBeq:bulk-transport-theorem-comoving-built-in-mass}
\Big[ \frac{d}{dt}  \int_{G(t)}\hspace{-3pt}\rho\psi\,d\DBx \Big]_{|t=t_0} \hspace{-4pt}
=\int_{G_0 \setminus \iface(t_0)} \hspace{-3pt} \rho \frac{D\psi}{Dt}\,d\DBx \,
+\int_{\iface(t_0)\cap G_0} \hspace{-3pt}\dot m[\![\psi]\!]\,dS.
\end{equation}
For vector-valued functions, the formula applies componentwise.
\end{corollary}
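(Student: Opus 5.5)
The plan is to apply Corollary~\ref{DBcor:2ph-transport-theorem-divergence-form} with $\phi:=\rho\psi$. Then I will rewrite both terms on its right-hand side, using the bulk continuity equation in the volume integral and the interfacial mass balance in the jump term. The hypotheses imposed in the statement are exactly those needed for this corollary to hold for $\rho\psi$. So the identity
\[
\Big[\frac{d}{dt}\int_{G(t)}\rho\psi\,d\DBx\Big]_{t=t_0}
=\int_{G_0\setminus\iface_0}\bigl(\partial_t(\rho\psi)+\DBdiv(\rho\psi\DBv)\bigr)\,d\DBx
+\int_{\iface_0\cap G_0}[\![\rho\psi(\DBv-\DBv^\iface)\cdot\DBn_\Sigma]\!]\,dS
\]
is available at the outset.

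\emph{Bulk term.} On each bulk part $G_0^\circ\cap\Omega^\pm(t_0)$ I will expand the integrand by the product rule:
\[
\partial_t(\rho\psi)+\DBdiv(\rho\psi\DBv)
=\psi\bigl(\partial_t\rho+\DBdiv(\rho\DBv)\bigr)+\rho\bigl(\partial_t\psi+\DBv\cdot\nabla\psi\bigr).
\]
The assumed phasewise regularity makes this identity valid in $L^1$ on each phase; under phasewise $C^1$ it holds pointwise. The first bracket vanishes by the bulk continuity equation, and the second bracket is $D\psi/Dt$. Hence the bulk integral equals $\int_{G_0\setminus\iface_0}\rho\,\frac{D\psi}{Dt}\,d\DBx$.

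\emph{Interface term.} On $\iface_0$ the one-sided values are $\rho^\pm\psi^\pm(\DBv^\pm-\DBv^\iface)\cdot\DBn_\Sigma$. By the interfacial mass balance, both $\rho^\pm(\DBv^\pm-\DBv^\iface)\cdot\DBn_\Sigma$ equal the common value $\dot m$. The one-sided values are therefore $\dot m\,\psi^\pm$, and the jump bracket becomes $\dot m\,[\![\psi]\!]$. Here the orientation of $\DBn_\Sigma$ is the one fixed in the jump bracket, and $\dot m$ is defined with the same orientation.

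Combining the two rewritings gives \eqref{DBeq:bulk-transport-theorem-comoving-built-in-mass}. For vector-valued $\psi$ the argument applies to each component. I do not expect a real obstacle. The only point needing care is that the product-rule splitting holds in $L^1$ on each phase, so that the $W^{1,1}$ assumption on $\rho\psi\DBv$ together with the separate regularity of $\rho,\psi,\DBv$ legitimately separates the two brackets. The statement assumes precisely this.
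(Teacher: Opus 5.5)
Your proposal is correct and matches the paper's proof. The paper likewise applies Corollary~\ref{DBcor:2ph-transport-theorem-divergence-form} to $\phi=\rho\psi$, splits the bulk integrand by the product rule and removes the $\psi(\partial_t\rho+\DBdiv(\rho\DBv))$ term with the continuity equation, and reduces the jump term to $\dot m\,[\![\psi]\!]$ via the interfacial mass balance.
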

\begin{proof}
Apply Corollary~\ref{DBcor:2ph-transport-theorem-divergence-form} to the quantity
$\phi=\rho\psi$. In the bulk phases,
\[
\partial_t(\rho\psi)+\DBdiv(\rho\psi\DBv)
=\psi\bigl(\partial_t\rho+\DBdiv(\rho\DBv)\bigr)
 +\rho\bigl(\partial_t\psi+\DBv\cdot\nabla\psi\bigr),
\]
so the first term on the right vanishes by the bulk continuity equation and the remaining
term is $\rho D\psi/Dt$. On the interface, the jump term from
Corollary~\ref{DBcor:2ph-transport-theorem-divergence-form} becomes
\[
[\![\rho\psi(\DBv-\DBv^\iface)\cdot\DBn_\Sigma]\!] =\dot m[\![\psi]\!],
\]
where the interfacial mass balance has been used. This proves
\eqref{DBeq:bulk-transport-theorem-comoving-built-in-mass}.
\end{proof}

This version of the transport theorem for co-moving volume in two-phase flows is tailor-made for an efficient formulation of the momentum balance.
Indeed, if the integral momentum balance (\ref{DBeq:momentum-balance-material}) is imposed at the reference time on admissible attainable-set co-moving volumes, then Corollary~\ref{DBcor:2ph-transport-theorem-comovingCV-built-in-mass}, applied with $\psi=\DBv$, yields the local momentum balance given in (\ref{E2}), (\ref{E4}).
\appendix

\section{Basic facts on Lipschitz domains}\label{app:Lipschitz-domains}

Lipschitz domains form a useful class of domains, for instance in the theory
of Sobolev spaces, trace operators, extension results and divergence theorems.
Below we collect some elementary facts about such domains which are used in
the main text. For those properties for which we do not have a specific
reference to the literature, we include detailed proofs. Much more information
on Lipschitz domains and Sobolev spaces on nonsmooth domains can be found, for
example, in Grisvard~\cite{Grisvard}, McLean~\cite{McLean}, and
Leoni~\cite{leoni2017first}. For the Sobolev Gauss--Green formula on bounded
Lipschitz domains used in the main text, see Section~1.5.3 of
Grisvard~\cite{Grisvard} and Chapter~3, Section~1, Theorem~1.1 of
Ne\v{c}as~\cite{Necas1967}.

It is common to reserve the term Lipschitz domain for open sets. We follow this
convention: unless explicitly stated otherwise, a bounded Lipschitz domain is a
bounded open set in the usual sense. In addition, we use the following compact
version. A compact set \(K\subset\R^n\) is called a compact Lipschitz domain if
\[
  K=\overline{\operatorname{int}K}
\]
and \(\operatorname{int}K\) is a bounded Lipschitz domain\footnote{$\operatorname{int}K$, the interior of $K$, is also abbreviated as $K^\circ$ in the main text.}. 
Whenever Sobolev
spaces, traces, normal fields, or divergence theorems are used for such a set
\(K\), they are understood on the open Lipschitz domain
\(\operatorname{int}K\). Its boundary is
\[
  \partial K=\partial(\operatorname{int}K).
\]
Moreover, since the boundary of a Lipschitz domain has Lebesgue measure zero,
we do not distinguish between \(K\) and \(\operatorname{int}K\) in volume
integrals.

We now start with the standard definition of a bounded Lipschitz domain.
\begin{definition}[bounded Lipschitz domain]
We call a bounded open \(U\subset\R^n\) a bounded Lipschitz domain if,
for every \(x_0\in\partial U\), there exist \(r,h>0\), an orthogonal map
\(Q\in\R^{n\times n}\), and a Lipschitz function
\(\varphi:B'_r(0)\to\R\) such that, with
\[
  y=Q(x-x_0)=(y',y_n)\in\R^{n-1}\times\R,
\]
one has
\[
  Q(U-x_0)\cap \big(B'_r(0)\times(-h,h)\big)
  =
  \{(y',y_n)\in B'_r(0)\times(-h,h):\ y_n>\varphi(y')\}.
\]
\end{definition}

The following uniform two-sided cone condition is sometimes easier to verify.
\begin{definition}[uniform two-sided cone condition]\label{DBdef:uniform-cone-condition}
Let \(\theta\in(0,\pi/2)\) and \(h>0\).
For a unit vector \(\nu\in \R^n\) let
\[
  \Gamma_{\theta,h}(\nu)
  :=
  \{\DBz\in\R^n:0<|\DBz|<h,\ \DBz\cdot \nu>|\DBz|\cos\theta\}.
\]
A bounded open set \(U\subset\R^n\) is said to satisfy a \emph{uniform
two-sided cone condition} if there exist \(\theta\in(0,\pi/2)\) and \(h>0\)
such that for every \(\DBx\in\partial U\) there is a unit vector
\(\nu_{\DBx}\) with
\[
  \DBx+\Gamma_{\theta,h}(\nu_{\DBx})\subset U,
  \qquad
  \DBx-\Gamma_{\theta,h}(\nu_{\DBx})\subset \R^n\setminus \overline U.
\]
\end{definition}

\begin{lemma}\label{lem:cone-characterization-lipschitz}
Let \(U\subset\R^n\) be bounded and open. Then \(U\) is a bounded Lipschitz
domain if and only if \(U\) satisfies the uniform two-sided cone condition.
\end{lemma}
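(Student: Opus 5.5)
We prove both directions separately. The forward implication (Lipschitz $\Rightarrow$ uniform two-sided cone condition) is a local graph computation followed by a compactness argument. The reverse implication (cone condition $\Rightarrow$ Lipschitz) requires building Lipschitz graph charts from the cones.

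\emph{Lipschitz implies cones.} Fix $x_0\in\partial U$ with chart data $r,h,Q,\varphi$, and let $L$ be the Lipschitz constant of $\varphi$. Choose $\theta$ with $\cot\theta>L$, i.e.\ $\tan(\pi/2-\theta)$ exceeds $L$ in the graph slope sense. Then for every boundary point $x$ in the smaller cylinder $B'_{r/2}\times(-h/2,h/2)$ the following holds. Let $\nu=Q^{\sf T}e_n$. The translated cone $x+\Gamma_{\theta,h'}(\nu)$ lies strictly above the graph, by the elementary inequality $|\varphi(y')-\varphi(x')|\le L|y'-x'|<z_n$ for $z$ in the cone. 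It also stays inside the cylinder once $h'$ is small. The reflected cone lies strictly below the graph and hence in $\R^n\setminus\overline U$, since in the cylinder $\overline U$ equals $\{y_n\ge\varphi\}$. Since $\partial U$ is compact, finitely many such half-size cylinders cover it. The minimum of the finitely many angles and heights then gives uniform $\theta,h$.

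\emph{Cones imply Lipschitz.} Fix $x_0\in\partial U$ and set $\nu:=\nu_{x_0}$. The key step is to show that the direction $\nu$ works uniformly near $x_0$, i.e.\ for boundary points $x$ with $|x-x_0|$ small:
\begin{itemize}
\item the half-line segment $x+t\nu$, $0<t<h/2$, lies in $U$;
\item $x-t\nu$ lies in the exterior.
\end{itemize}
Knowing only that $x$ has its own axis $\nu_x$, I plan the following argument. If $\nu_x$ deviated from $\nu$ by more than $\theta$, the cones $x_0+\Gamma(\nu)\subset U$ and $x-\Gamma(\nu_x)\subset\R^n\setminus\overline U$ would intersect when $|x-x_0|\ll h\sin(\theta/2)$, which is a contradiction. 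Hence $\angle(\nu_x,\nu)\le\theta$ up to a small error. Since the cone at $x$ contains the half-cone of angle $\theta/2$ around any direction within $\theta/2$ of $\nu_x$, I will instead work with a shrunken angle: replace $\theta$ by $\theta/2$ and use the cone-intersection argument to get $\angle(\nu_x,\nu)<\theta/2$ for $x$ close to $x_0$. Then $\nu$ lies in the interior of $\Gamma_{\theta}(\nu_x)$ as a direction, and the segment claims follow. This angle bookkeeping is the main obstacle, and I expect to spend most care on it.

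Given the key step, rotate so that $\nu=e_n$. In a small cylinder around $x_0$, each vertical line meets $\partial U$ in exactly one point. It meets at least one point by connectedness of the segment between a point below, which lies in the exterior by the cone at $x_0$, and a point above, which lies in $U$. It meets at most one point because above any boundary point everything on the line is in $U$, and below it everything is in the exterior. This defines $\varphi(y')$ with $U=\{y_n>\varphi\}$ locally. The Lipschitz bound $|\varphi(y')-\varphi(z')|\le \cot(\theta/2)|y'-z'|$ follows by the same reasoning as in the forward direction: the exterior cone at $(z',\varphi(z'))$ and the interior cone there forbid the graph from leaving the double cone, since a boundary point cannot lie in an open cone contained in $U$ or in the exterior. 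This verifies the definition of a bounded Lipschitz domain.
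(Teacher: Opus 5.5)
Your forward direction is the standard graph-plus-compactness argument and is fine. It in fact produces locally constant axes: one $\nu=Q^{\sf T}e_n$ per chart.

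The reverse direction breaks down at the step you flagged, and the problem is more than bookkeeping. Disjointness of $x_0+\Gamma_{\theta,h}(\nu)\subset U$ and $x-\Gamma_{\theta,h}(\nu_x)\subset\R^n\setminus\overline U$ only yields $\angle(\nu,\nu_x)\le\pi-2\theta+o(1)$ as $|x-x_0|/h\to0$. The reason is that two cones of half-angle $\theta$ with almost the same vertex are already disjoint once $\angle(\nu,-\nu_x)\ge 2\theta$.

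Nothing better can be proved, because admissible axes are far from unique. For a half-space, every direction within $\pi/2-\theta$ of the normal is admissible at every boundary point. So the axes assigned to arbitrarily close boundary points may differ by nearly $\pi-2\theta$.

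Worse, the conclusion you want from the key step is false even for Lipschitz domains. That conclusion is that the given axis $\nu_{x_0}$ is a good graph direction at all nearby boundary points. Take $U=\{y>f(x)\}$ near $0$, with $f=0$ on $(-\infty,0]$ and $f=2^{-k}$ on $[2^{-k},2^{-k+1})$.
\begin{itemize}
\item Since $0\le f(x)\le|x|$, the vertical axis $e_2$ is admissible at $0$ for $\theta<\pi/4$.
\item Since $f$ is nondecreasing, $(-1,1)/\sqrt2$ is admissible at every boundary point.
\item Yet at points of the risers $\{2^{-k}\}\times(2^{-k-1},2^{-k})$, which accumulate at $0$, the points $x+te_2$ lie in $\partial U$ for small $t>0$.
\end{itemize}
This $U$ is Lipschitz, but only as a graph over the rotated direction.

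With point-dependent axes, as in the paper's definition of the uniform two-sided cone condition, the converse is actually false. So no repair of the angle estimate can close the gap.

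Replace the staircase by rectangular bumps: $f=c2^{-k}$ on $[2^{-k},(1+4c)2^{-k}]$ and $f=0$ elsewhere. Then $f$ is upper semicontinuous, so $U$ is open; take $c$ small and close the domain off far away. Fix $\theta=\pi/16$ and a small $h$, and assign axes as follows.
\begin{itemize}
\item On the rising side of each bump, take the axis tilted by $\theta$ from the vertical to the left. The rising side means the rising riser, the left half of the plateau, the flat part to the left up to the midpoint of the gap, and $x\le0$.
\item On the falling side, take the axis tilted by $\theta$ to the right.
\end{itemize}
All cone directions then lie within $2\theta$ of the vertical. The two-sided condition reduces to $2\tan(3\pi/8)>1$, together with the bump heights being small compared with the gaps.

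However, near an interior point of a rising riser, $U$ is a left half-plane, and near a falling riser it is a right half-plane. A graph direction $e$ would therefore need $e_1<0$ and $e_1>0$ at once. Every neighbourhood of $0$ contains whole bumps, so $U$ is not Lipschitz at $0$.

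The missing ingredient is a \emph{locally fixed} axis: one cone that works at all boundary points near $x_0$. Grisvard's uniform cone property, to which the cited reference refers, is of this type. Under that hypothesis your last paragraph goes through essentially as written: each line parallel to the common axis crosses $\partial U$ exactly once, and the double cones give the Lipschitz bound.
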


For a proof see, e.g., \cite[Chapter~1]{Grisvard} or
\cite[Section~3.1]{McLean}. There one also finds the following basic fact.

\begin{lemma}[\(C^1\)-invariance of bounded Lipschitz domains]\label{DBthm:C1-invariance-bounded-Lipschitz}
Let \(U\subset\R^n\) be a bounded Lipschitz domain.
Let \(V,W\subset\R^n\) be open neighborhoods of \(\overline U\) and
\(\overline{\Psi(U)}\), respectively, and let
\[
  \Psi:V\to W
\]
be a \(C^1\)-diffeomorphism. Then \(\Psi(U)\) is again a bounded Lipschitz
domain.
\end{lemma}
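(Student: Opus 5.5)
The statement to prove is the $C^1$-invariance lemma: if $U$ is a bounded Lipschitz domain and $\Psi:V\to W$ is a $C^1$-diffeomorphism between neighborhoods of $\overline U$ and $\overline{\Psi(U)}$, then $\Psi(U)$ is again a bounded Lipschitz domain. I would work with the uniform two-sided cone condition, using Lemma~\ref{lem:cone-characterization-lipschitz} in both directions: verify cones for $\Psi(U)$ and then convert back to the graph definition.

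\emph{Step 1: reduction to boundary points.} Since $\Psi$ is a homeomorphism, $\Psi(U)$ is open and bounded, and $\partial\Psi(U)=\Psi(\partial U)$. Set $K:=\overline U$, a compact subset of $V$. By uniform continuity of $D\Psi$ on a compact neighborhood $K'\subset V$ of $K$, there is a modulus $\omega$ with
\[
|D\Psi(\DBx)-D\Psi(\DBy)|\le\omega(|\DBx-\DBy|),\qquad \DBx,\DBy\in K'.
\]
Moreover $\|D\Psi\|\le\Lambda$ and $\|(D\Psi)^{-1}\|\le\Lambda$ on $K'$ for some $\Lambda\ge1$. Taylor's formula then gives, for $\DBx_0\in\partial U$ and $|\DBz|$ small,
\[
\Psi(\DBx_0+\DBz)=\Psi(\DBx_0)+A\DBz+o(|\DBz|),\qquad A:=D\Psi(\DBx_0),
\]
with the $o(|\DBz|)$ term bounded by $\omega(|\DBz|)\,|\DBz|$, uniformly in $\DBx_0$.

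\emph{Step 2: transport the cones.} Let $\theta,h$ be the cone data of $U$. The linear map $A$ sends the cone $\Gamma_{\theta,h}(\nu)$ onto a set containing a cone $\Gamma_{\theta',h'}(A\nu/|A\nu|)$, where $\theta'>0$ and $h'>0$ depend only on $\theta,h,\Lambda$. To see this, apply to a cone of half the aperture a quantitative estimate on angle distortion, which is bounded by the condition number of $A$. The nonlinear remainder is smaller by the factor $\omega(|\DBz|)$, which tends to $0$. So if $\DBy=\Psi(\DBx_0)+\DBw$ with $\DBw$ in a slightly narrower cone of small height, then $\Psi^{-1}(\DBy)-\DBx_0$ lies in $\Gamma_{\theta,h}(\nu)$. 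To make this precise, I would apply the same Taylor argument to $\Psi^{-1}$, whose derivative is $A^{-1}$ at $\Psi(\DBx_0)$.

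Hence $\Psi(\DBx_0)+\Gamma_{\theta'',h''}(\nu')\subset\Psi(U)$, and symmetrically the exterior cone lands in $\Psi(\R^n\setminus\overline U)\cap W$. The latter is contained in $\R^n\setminus\overline{\Psi(U)}$ because $\Psi$ is a bijection onto $W$. The constants are uniform in $\DBx_0$ because $\omega$ and $\Lambda$ are uniform on $K'$. Lemma~\ref{lem:cone-characterization-lipschitz} then gives the claim.

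\emph{Main obstacle.} The delicate point is Step~2: choosing the shrunken aperture and height so that the cone inclusion survives the $o(|\DBz|)$ error uniformly. I would handle it as follows. First pick $\theta''<\theta'$. Then choose $h''$ so small that $\omega(\Lambda h'')\Lambda^2$ is below the angular gap, say below $\sin(\theta'-\theta'')/2$. Finally, check the inclusion pointwise via the inverse map's expansion. The rest is bookkeeping.
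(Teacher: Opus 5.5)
Your transport of a single cone is sound (for the exterior cone, write $\Psi(V\setminus\overline U)$, since $\Psi$ is only defined on $V$). The paper gives no argument for this lemma; it refers to Grisvard and McLean. The gap is the last step, the appeal to the ``if'' direction of Lemma~\ref{lem:cone-characterization-lipschitz}.

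At each point $\Psi(\mathbf{x}_0)$ you produce cones whose axis $D\Psi(\mathbf{x}_0)\nu_{\mathbf{x}_0}/|D\Psi(\mathbf{x}_0)\nu_{\mathbf{x}_0}|$ is attached to that point only. That is exactly the pointwise condition of Definition~\ref{DBdef:uniform-cone-condition}. In this pointwise form, uniform $(\theta,h)$ do not yield a Lipschitz graph. Here is an example in $\R^2$:
\begin{itemize}
\item Let $\partial U$ near $0$ consist of two staircases accumulating at $0$, both inside a flat wedge $0\le x_2\le\kappa|x_1|$ with $\kappa$ small.
\item Both have horizontal treads and geometrically shrinking risers. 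The risers point in direction $(-1,2)$ for $x_1>0$ and $(1,2)$ for $x_1<0$, so the overhangs lean towards $0$. Take $U$ on the upper side.
\item Use an axis bisecting $(-1,2)$ and $(-1,0)$ on the right staircase, its mirror image on the left, and $e_2$ at $0$. Each staircase is a Lipschitz graph transversal to its axis, and the opposite staircase stays below the flat wedge. So these axes give two-sided cones of aperture $10^\circ$ and a fixed height at every boundary point.
\item However, the right staircase is a graph in direction $e$ only if $\pm e$ lies strictly between $(-1,2)$ and $(-1,0)$. The left one is a graph only if $\pm e$ lies strictly between $(1,0)$ and $(1,2)$.
\item These ranges are disjoint, so $U$ has no graph chart at $0$.
\end{itemize}
The equivalence in the literature (Grisvard's uniform cone property) requires one cone that serves all boundary points in a neighborhood.

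The repair fits your approach.
\begin{enumerate}
\item Take $\nu$ from a graph chart of $U$ at $\mathbf{x}_0$. Then $\mathbf{x}\pm\Gamma_{\theta,h}(\nu)$ are interior and exterior cones for all $\mathbf{x}\in\partial U\cap B_\delta(\mathbf{x}_0)$, with the same $\nu$.
\item Run your Taylor estimate at each such $\mathbf{x}$, and use $\|D\Psi(\mathbf{x})^{-1}-D\Psi(\mathbf{x}_0)^{-1}\|\to0$ as $\delta\to0$. This shows that every $\Psi(\mathbf{x})$ carries two-sided cones of a fixed aperture $\theta''$ and height $h''$ about the common axis $\nu'=D\Psi(\mathbf{x}_0)\nu/|D\Psi(\mathbf{x}_0)\nu|$.
\item Consequently, any two points $\mathbf{y}_1\neq\mathbf{y}_2$ of $\partial\Psi(U)$ near $\Psi(\mathbf{x}_0)$ satisfy $\mathbf{y}_2-\mathbf{y}_1\notin\pm\Gamma_{\theta'',h''}(\nu')$.
\item Each line parallel to $\nu'$ close to $\Psi(\mathbf{x}_0)$ runs from the interior cone at $\Psi(\mathbf{x}_0)$ to the exterior one. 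Hence it meets $\partial\Psi(U)$, and by step 3 exactly once in a small cylinder.
\item The crossing point depends Lipschitz-continuously on the base point, with constant $\cot\theta''$. The common-axis cones place $\Psi(U)$ on the $+\nu'$ side.
\end{enumerate}
This gives the graph chart directly, without the pointwise lemma.
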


We also exploit the following basic result.
\begin{proposition}\label{prop:Cartesian-product-Lipschitz-domain}
Let \(U\subset\R^n\) be a bounded Lipschitz domain and let
\(I=(a,b)\subset\R\) be a bounded open interval. Then \(I\times U\) is a
bounded Lipschitz domain in \(\R^{n+1}\).
Moreover, for fixed \(U\), the opening angle in the uniform two-sided cone
condition for \(I\times U\) can be chosen independently of the length of
\(I\). Only the corresponding cone height may depend on \(|I|\).
\end{proposition}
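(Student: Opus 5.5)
We verify the uniform two-sided cone condition of Definition~\ref{DBdef:uniform-cone-condition} for \(I\times U\) and then invoke Lemma~\ref{lem:cone-characterization-lipschitz}. By that lemma, \(U\) satisfies the cone condition with some \(\theta\in(0,\pi/2)\), \(h>0\) and unit vectors \(\nu_\DBx\), \(\DBx\in\partial U\). The boundary of the product splits as
\[
\partial(I\times U)=(\{a,b\}\times\overline U)\cup(\overline I\times\partial U).
\]
This gives three types of boundary points: lateral points \((t,\DBx)\) with \(t\in I\), \(\DBx\in\partial U\); interior time-face points \((a,\DBx)\) or \((b,\DBx)\) with \(\DBx\in U\); and edge points \((a,\DBx)\) or \((b,\DBx)\) with \(\DBx\in\partial U\). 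For each type we choose an axis and a common angle \(\theta'\) depending only on \(\theta\), together with a height \(h'\) that may depend on \(|I|\) and \(h\).

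\emph{Lateral points.} Take the axis \((0,\nu_\DBx)\). A space-time cone with this axis and angle \(\theta'\le\theta\) projects spatially into \(\Gamma_{\theta,h}(\nu_\DBx)\), since \(z\cdot\nu>|z|\cos\theta'\) for \(z=(z_t,z')\) implies \(z'\cdot\nu>|z'|\cos\theta'\) and \(|z'|\ge|z|\cos\theta'>0\). Its time component satisfies \(|z_t|\le|z|\sin\theta'<h'\). If \(h'\le\operatorname{dist}(t,\partial I)\), the cone stays inside \(I\times U\), and the opposite cone lies outside \(\overline{I\times U}\) because its spatial part lies outside \(\overline U\). Near the ends of \(I\), where \(\operatorname{dist}(t,\partial I)\) is small, we instead use the tilted axis from the edge case, which works uniformly.

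\emph{Time-face and edge points.} At \((a,\DBx)\) use the axis \(\mu=(\cos\beta,\sin\beta\,\nu_\DBx)\) for a fixed small \(\beta\). For points with \(\DBx\in U\) far from \(\partial U\) the pure time axis \((1,0)\) suffices. For points \(\DBx\) near \(\partial U\), hence at edge points, we take \(\nu_{\DBx^*}\) for a nearby boundary point \(\DBx^*\), so that the spatial component of the cone points into \(\Gamma_{\theta,h}(\nu_{\DBx^*})\) shifted appropriately, while the positive time component enters \(I\). The opposite cone has negative time component and therefore lies outside \(\overline I\times\R^n\). For the inward cone we need both \(z_t>0\) and spatial inclusion. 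The time component is positive if \(\theta'<\beta\). For the spatial part, \(z'\) lies within angle \(\beta+\theta'\) of \(\nu\) up to the factor \(\cos\) of the tilt, so \(\beta+\theta'<\theta\) suffices, for example \(\beta=\theta/2\) and \(\theta'=\theta/4\). Then \(h'=\min(h,|I|/2)\) works, and \(\theta'\) depends only on \(\theta\).

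\textbf{Main obstacle.} The delicate step is the uniform treatment of points \(\DBx\in U\) close to \(\partial U\) on the time faces, and of lateral points with \(t\) close to \(a\) or \(b\). There the choice of axis must interpolate between \((1,0)\) and the tilted axis without shrinking the angle. We plan to handle this by a case split with threshold \(\delta\sim h\sin\theta\). Where \(\operatorname{dist}(\DBx,\partial U)<\delta\), use the cone at the nearest boundary point \(\DBx^*\) and check the inclusion by the triangle inequality. Apply the same device for \(t\) close to \(a\) or \(b\) on the lateral part. In every case the angle depends only on \(\theta\), and \(|I|\) enters only through the height \(h'\). This proves both assertions of the proposition.
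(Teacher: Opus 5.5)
\paragraph{Comparison with the paper's route.} You verify the two-sided cone condition for $I\times U$ directly. The paper instead works in local Lipschitz graph charts: at a corner it rotates $(t,x_n)$ by $45^\circ$, so that $I\times U$ becomes the supergraph of $\max\{y_n,-y_n+\sqrt2\,\varphi(y')\}$.

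Your treatment of lateral points, of time-face points far from $\partial U$, and of edge points $(a,\DBx)$ with $\DBx\in\partial U$ is sound. There you only use the cone of $U$ at the boundary point $\DBx$ itself. One slip: the conditions $\theta'<\beta$ and $\beta+\theta'<\theta$ belong to the axis $\mu=(\sin\beta,\cos\beta\,\nu_\DBx)$, i.e.\ $(0,\nu_\DBx)$ tilted by $\beta$ toward the time direction. For $(\cos\beta,\sin\beta\,\nu_\DBx)$ with $\beta$ small, the spatial parts of the cone are not within angle $\beta+\theta'$ of $\nu_\DBx$.

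\paragraph{The gap.} It is exactly the step you flag: time-face points $(a,\DBx)$ with $\DBx\in U$ and $0<\rho:=\operatorname{dist}(\DBx,\partial U)<\delta$.
\begin{itemize}
\item The time coordinate is harmless there, so the inward space-time cone requires $\DBx+z'\in U$ for all spatial parts $z'$. These fill a spatial cone of some half-angle $\theta''<\theta$ around $\nu_{\DBx^*}$, with $|z'|$ arbitrarily small.
\item Write $\DBx+z'=\DBx^*+(\DBx-\DBx^*)+z'$. The triangle-inequality argument places $(\DBx-\DBx^*)+z'$ in $\Gamma_{\theta,h}(\nu_{\DBx^*})$ only when $|z'|\ge K\rho$ with $K\sim1/\sin(\theta-\theta'')$.
\item The ball $B(\DBx,\rho)\subset U$ covers only $|z'|<\rho$. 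The range $\rho\le|z'|<K\rho$ is not controlled.
\item The reason is that Definition~\ref{DBdef:uniform-cone-condition} gives cones only at boundary points, with axes that may vary from point to point. Moreover, $\DBx-\DBx^*$ need not lie in $\Gamma_{\theta,h}(\nu_{\DBx^*})$; one only knows its angle with $\nu_{\DBx^*}$ is at most $\pi/2-\theta$.
\item Comparing with the space-time cone at the edge point $(a,\DBx^*)$ fails for the same reason, since $(0,\DBx-\DBx^*)$ is not in that cone.
\end{itemize}
What you need is an interior cone, with uniform angle and height, at every point of $U$ near $\partial U$. This is not what Lemma~\ref{lem:cone-characterization-lipschitz} hands you.

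\paragraph{The fix.} It comes directly from the graph definition of $U$, which you have by hypothesis. In a chart $U=\{y_n>\varphi(y')\}$ with $\operatorname{Lip}\varphi\le L$, take $\cot\theta_L>L$. Then every $y\in\overline U$ in a smaller cylinder satisfies $y+\Gamma_{\theta_L,h_L}(\DBe_n)\subset U$, because $y_n+w_n>\varphi(y'+w')+|w|(\cos\theta_L-L\sin\theta_L)$. For $y\in\partial U$ one also has $\bigl(y-\Gamma_{\theta_L,h_L}(\DBe_n)\bigr)\cap\overline U=\emptyset$.

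Tilting $(0,\DBe_n)$ toward $\pm(1,0)$ then handles lateral, edge and near-edge time-face points in that chart simultaneously, with an angle depending only on $L$. At that point, however, your argument is essentially a cone reformulation of the paper's corner chart: its $45^\circ$ direction $(1,\DBe_n)/\sqrt2$ is precisely such a tilted axis.
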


\begin{proof}
Set \(D:=I\times U\). It is clear that \(D\) is bounded and open. Let
\((t_0,x_0)\in\partial D\).

If \(t_0\in(a,b)\) and \(x_0\in\partial U\), use the local Lipschitz graph
representation of \(U\) at \(x_0\). Thus, after a translation and a rigid motion
in the \(x\)-variables, one has locally
\[
  U=\{(x',x_n): x_n>\varphi(x')\}
\]
with \(\varphi\) Lipschitz. Hence, in the variables \((t,x',x_n)\), locally
\[
  D=\{(t,x',x_n): x_n>\varphi(x')\},
\]
which is a Lipschitz graph domain in \(\R^{n+1}\).

If \(t_0\in\{a,b\}\) and \(x_0\in U\), choose \(\rho>0\) such that
\(\overline{B_\rho(x_0)}\subset U\). Then, near \((t_0,x_0)\), the set \(D\)
is just a half-space:
\[
  D=\{(t,x): t>a\}
  \quad\text{or}\quad
  D=\{(t,x): t<b\}.
\]

It remains to consider the corner case \(t_0\in\{a,b\}\) and
\(x_0\in\partial U\). By symmetry, it suffices to treat \(t_0=a\). After
translation in \(t\) and a rigid motion in the \(x\)-variables, we may assume
\(a=0\), \(x_0=0\), and
\[
  U=\{(x',x_n): x_n>\varphi(x')\}
\]
locally near \(0\), with \(\varphi\) Lipschitz. Then, locally near \((0,0)\),
\[
  D=\{(t,x',x_n): t>0,\ x_n>\varphi(x')\}.
\]
Now introduce the linear change of variables
\[
  y'=x',
  \qquad
  y_n=\frac{x_n-t}{\sqrt2},
  \qquad
  y_{n+1}=\frac{x_n+t}{\sqrt2}.
\]
Since
\[
  t=\frac{y_{n+1}-y_n}{\sqrt2},
  \qquad
  x_n=\frac{y_{n+1}+y_n}{\sqrt2},
\]
the inequalities \(t>0\) and \(x_n>\varphi(x')\) become
\[
  y_{n+1}>y_n,
  \qquad
  y_{n+1}>-y_n+\sqrt2\,\varphi(y').
\]
Thus, locally,
\[
  D=\{(y',y_n,y_{n+1}): y_{n+1}>\psi(y',y_n)\},
\]
where
\[
  \psi(y',y_n):=\max\{y_n,\,-y_n+\sqrt2\,\varphi(y')\}.
\]
Since the maximum of two Lipschitz functions is Lipschitz, this again is a
Lipschitz graph domain.

Thus every boundary point of \(I\times U\) admits a local Lipschitz graph
representation, hence \(I\times U\) is a bounded Lipschitz domain.

The same construction gives the asserted independence of the interval length
for the cone opening angle. Indeed, take a finite Lipschitz graph atlas for
\(\partial U\), with all graph Lipschitz constants bounded by a number \(L_U\).
At boundary points with \(t_0\in(a,b)\) the graph constants are those of
\(U\). At boundary points with \(t_0\in\{a,b\}\) and \(x_0\in U\), the local
model is a half-space and the graph constant is zero. In the corner charts the
function
\[
  \psi(y',y_n):=\max\{y_n,\,-y_n+\sqrt2\,\varphi(y')\}
\]
has a Lipschitz constant bounded in terms of \(L_U\) only. Thus the product
charts have Lipschitz constants bounded by a constant depending only on the
chosen atlas of \(U\), and not on \(a\), \(b\), or \(|I|\). The equivalence
between Lipschitz graph representations and the uniform two-sided cone
condition therefore gives a cone opening angle depending only on \(U\), not on
\(|I|\). The cone height, however, may have to be chosen smaller than a fixed
fraction of \(|I|\) near the time faces.
\end{proof}

Consequently, if \(K\subset\R^n\) is a compact Lipschitz domain and
\(I=(a,b)\subset\R\) is a bounded open interval, then
\[
  \overline{I\times\operatorname{int}K}=[a,b]\times K
\]
is a compact Lipschitz domain in \(\R^{n+1}\). Moreover, the
small-perturbation threshold obtained from
Theorem~\ref{DBthm:small-biLip-perturbation-cone} and
Corollary~\ref{cor:closed-lipschitz-small-perturbation} for the compact
products \([a,b]\times K\) can be chosen depending only on \(K\), and not on
\(b-a\).

We now consider small bi-Lipschitz perturbations of the identity. Let us note
that bi-Lipschitz images of Lipschitz domains are often called weakly
Lipschitz domains and need not be Lipschitz domains in the graph sense. Since
flow maps for Lipschitz velocity fields are bi-Lipschitz maps, co-moving sets
that emanate from a Lipschitz domain retain the weak Lipschitz property, while
they might stop to be Lipschitz domains. The following result is hence
advantageous.
\begin{theorem}[stability under small Lipschitz perturbations of the identity]\label{DBthm:small-biLip-perturbation-cone}
Let \(U\subset\R^n\) be a bounded Lipschitz domain. Then there exists a constant
\(L_U\in(0,1)\) such that for every Lipschitz map
\[
  F:U\to\R^n
  \qquad\text{with}\qquad
  \Lip(F)<L_U,
\]
the map
\[
  T:=\Id+F
\]
is a bi-Lipschitz homeomorphism of \(U\) onto \(T(U)\), and \(T(U)\) is a
bounded Lipschitz domain.

More precisely, if \(U\) satisfies the uniform two-sided cone condition with
parameters \((\theta_0,h_0)\) and \(\R^n\) is equipped with the Euclidean norm,
then one may choose
\[
  L_U:=\frac{1-\cos\theta_0}{2}.
\]
\end{theorem}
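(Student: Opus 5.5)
The plan is to prove that $T=\Id+F$ is bi-Lipschitz and that $T(U)$ still satisfies a uniform two-sided cone condition, possibly with a slightly smaller opening angle and a smaller height. Lemma~\ref{lem:cone-characterization-lipschitz} then shows that $T(U)$ is a bounded Lipschitz domain.

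\emph{Bi-Lipschitz property.} From $|T\DBx-T\DBy|\ge |\DBx-\DBy|-|F\DBx-F\DBy|\ge (1-L)|\DBx-\DBy|$ with $L:=\Lip(F)<L_U<1$, the map $T$ is injective on $U$. Its inverse has Lipschitz constant at most $(1-L)^{-1}$, and $\Lip(T)\le 1+L$.

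Before the cone argument, I extend $F$ to all of $\R^n$ with the same Lipschitz constant. Kirszbraun's theorem does this for the Euclidean norm, which is why the statement fixes that norm. The extended map $T$ is then a bi-Lipschitz map of $\R^n$ onto itself. Surjectivity comes from the Banach fixed point theorem: for given $\DBz$, the map $\DBx\mapsto \DBz-F(\DBx)$ is a contraction. Hence $T$ is a homeomorphism of $\R^n$, so
\[
T(U)\ \text{is open},\qquad \partial T(U)=T(\partial U),\qquad T(\R^n\setminus\overline U)=\R^n\setminus\overline{T(U)} .
\]

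\emph{Cone condition for the image, which I expect to be the main step.} Fix $\DBx\in\partial U$ and set $\DBy=T\DBx$. Using the same axis $\nu:=\nu_\DBx$, I want
\[
\DBy+\Gamma_{\theta_1,h_1}(\nu)\subset T(U),\qquad \DBy-\Gamma_{\theta_1,h_1}(\nu)\subset \R^n\setminus\overline{T(U)},
\]
with $\theta_1<\theta_0$ and $h_1>0$ depending only on $(\theta_0,h_0)$ and $L$.

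First I would try to compare $T(\DBx+\Gamma_{\theta_0,h_0}(\nu))$ directly with a smaller cone. For a point $\DBz$ in the big cone, $T(\DBx+\DBz)-\DBy=\DBz+(F(\DBx+\DBz)-F\DBx)$, and this lies within $L|\DBz|$ of $\DBz$. So the image direction deviates from $\DBz$ by an angle at most $\arcsin L$. This gives containment of the image ray, but I need the converse: every point $\DBw$ of the small cone must be hit from inside the big cone.

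For this I would solve $T(\DBx+\DBz)=\DBy+\DBw$ by the contraction $\DBz\mapsto \DBw-(F(\DBx+\DBz)-F\DBx)$. Its fixed point satisfies
\[
|\DBz-\DBw|\le L|\DBz|,\qquad |\DBz|\le \frac{|\DBw|}{1-L}.
\]
The inequality $\DBz\cdot\nu\ge \DBw\cdot\nu-L|\DBz|$ then gives $\DBz\cdot\nu>|\DBz|\cos\theta_0$ provided $\DBw\cdot\nu>|\DBw|\cos\theta_1$. Here $\cos\theta_1$ is chosen so that
\[
\cos\theta_1-\frac{L}{1-L}\ \ge\ \frac{\cos\theta_0}{1-L}.
\]
With $L<(1-\cos\theta_0)/2$ such a $\theta_1\in(0,\pi/2)$ exists, and this is exactly the origin of the constant $L_U$. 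I take $h_1=(1-L)h_0$, so that $|\DBz|<h_0$.

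The exterior cone is handled identically with $-\nu$, now applied to points of $\R^n\setminus\overline U$; this is where the global extension and homeomorphism property are used. The delicate bookkeeping is the inequality chain that keeps the preimage strictly inside the original cone; the algebra is routine but must be done carefully.
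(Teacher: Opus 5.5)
Your proposal is correct and follows the paper's proof essentially step for step. The paper also uses the Kirszbraun extension, Banach fixed-point surjectivity, the global homeomorphism with $\partial T(U)=T(\partial U)$, and the same cone axis $\nu_{\DBx}$, and your constants $\cos\theta_1=(L+\cos\theta_0)/(1-L)$ and $h_1=(1-L)h_0$ coincide exactly with the paper's $\cos\theta_1=\beta+(1+\beta)\cos\theta_0$ and $h_1=h_0/(1+\beta)$ for $\beta=L/(1-L)$. The only cosmetic difference is that you find the preimage by a pointwise contraction, whereas the paper writes $T^{-1}=\Id+H$ with $\Lip(H)\le\beta$.
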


\begin{proof}
Notice first that we may assume \(\R^n\) to be equipped with the Euclidean
norm. We identify \(F\) with its unique continuous extension to \(\overline U\)
and, by Kirszbraun's extension theorem, with an extension to all of \(\R^n\)
with the same Lipschitz constant.

By Lemma~\ref{lem:cone-characterization-lipschitz}, there exist
\(\theta_0\in(0,\pi/2)\) and \(h_0>0\) such that \(U\) satisfies the uniform
two-sided cone condition with parameters \((\theta_0,h_0)\). Let
\[
  \alpha:=\Lip(F)<\frac{1-\cos\theta_0}{2}.
\]

We first show that \(T\) is a bi-Lipschitz homeomorphism of \(\R^n\). For
\(x,y\in\R^n\),
\[
  |T(x)-T(y)|
  \le |x-y|+|F(x)-F(y)|
  \le (1+\alpha)|x-y|,
\]
and
\[
  |T(x)-T(y)|
  \ge |x-y|-|F(x)-F(y)|
  \ge (1-\alpha)|x-y|.
\]
Hence \(T\) is injective and Lipschitz. For fixed \(y\in\R^n\), the map
\[
  S_y(x):=y-F(x)
\]
is a contraction with constant \(\alpha<1\). By Banach's fixed point theorem,
there is a unique \(x\in\R^n\) such that \(S_y(x)=x\), i.e. \(T(x)=y\). Thus
\(T\) is bijective, and
\[
  \Lip(T^{-1})\le \frac{1}{1-\alpha}.
\]
Moreover, writing
\[
  T^{-1}=\Id+H,
  \qquad
  H(y):=-F(T^{-1}(y)),
\]
this gives
\[
  \Lip(H)\le \frac{\alpha}{1-\alpha}=:\beta.
\]
Since
\[
  \alpha<\frac{1-\cos\theta_0}{2},
\]
we have
\[
  \beta=\frac{\alpha}{1-\alpha}
  <
  \frac{1-\cos\theta_0}{1+\cos\theta_0}.
\]

Set
\[
  h_1:=\frac{h_0}{1+\beta},
  \qquad
  \cos\theta_1:=\beta+(1+\beta)\cos\theta_0 .
\]
The above bound on \(\beta\) implies \(\cos\theta_1<1\), hence
\(\theta_1\in(0,\theta_0)\).

Since \(T\) is a homeomorphism of \(\R^n\), one has
\[
  \partial T(U)=T(\partial U).
\]
It is therefore enough to verify the cone condition at points \(y=T(x)\) with
\(x\in\partial U\). Fix \(x\in\partial U\), and let \(\nu_x\) be a unit vector
such that
\[
  x+\Gamma_{\theta_0,h_0}(\nu_x)\subset U,
  \qquad
  x-\Gamma_{\theta_0,h_0}(\nu_x)\subset \R^n\setminus \overline U.
\]
Set \(y:=T(x)\).

We claim that
\[
  y+\Gamma_{\theta_1,h_1}(\nu_x)\subset T(U),
  \qquad
  y-\Gamma_{\theta_1,h_1}(\nu_x)\subset \R^n\setminus \overline{T(U)}.
\]

Take \(z\in\Gamma_{\theta_1,h_1}(\nu_x)\), and define
\[
  w:=T^{-1}(y+z)-x.
\]
Since \(T^{-1}=\Id+H\) and \(\Lip(H)\le\beta\),
\[
  |w-z|=|H(y+z)-H(y)|\le\beta |z|.
\]
Therefore
\[
  |w|\le |z|+|w-z|\le (1+\beta)|z|<h_0,
\]
and
\[
  w\cdot\nu_x
  \ge z\cdot\nu_x-|w-z|
  > |z|\cos\theta_1-\beta |z|
  = (1+\beta)|z|\cos\theta_0
  \ge |w|\cos\theta_0.
\]
Hence \(w\in\Gamma_{\theta_0,h_0}(\nu_x)\), so \(x+w\in U\), and therefore
\[
  y+z=T(x+w)\in T(U).
\]

For the exterior cone inclusion, take again
\(z\in\Gamma_{\theta_1,h_1}(\nu_x)\), and set
\[
  w:=T^{-1}(y-z)-x.
\]
Then
\[
  |w+z|\le\beta |z|,
  \qquad
  |w|\le(1+\beta)|z|<h_0,
\]
and
\[
  (-w)\cdot\nu_x
  \ge z\cdot\nu_x-|w+z|
  > |z|\cos\theta_1-\beta |z|
  = (1+\beta)|z|\cos\theta_0
  \ge |w|\cos\theta_0.
\]
Thus \(-w\in\Gamma_{\theta_0,h_0}(\nu_x)\), hence
\(x+w\in\R^n\setminus\overline U\). Since \(T\) is a homeomorphism of
\(\R^n\), it maps \(\overline U\) onto \(\overline{T(U)}\). Therefore
\[
  y-z=T(x+w)\in \R^n\setminus \overline{T(U)}.
\]

Therefore \(T(U)\) satisfies the uniform two-sided cone condition with
parameters \((\theta_1,h_1)\). By
Lemma~\ref{lem:cone-characterization-lipschitz}, \(T(U)\) is a bounded
Lipschitz domain.
\end{proof}

\begin{corollary}[Compact Lipschitz domains under small perturbations]\label{cor:closed-lipschitz-small-perturbation}
Let \(K\subset\R^n\) be a compact Lipschitz domain. Then there exists
\(L_K\in(0,1)\) such that, for every Lipschitz map
\(F:K\to\R^n\) with \(\operatorname{Lip}(F)<L_K\), the map
\(T:=\operatorname{Id}+F\) is injective on \(K\), and \(T(K)\) is a compact
Lipschitz domain.
\end{corollary}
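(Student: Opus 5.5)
The plan is to reduce Corollary~\ref{cor:closed-lipschitz-small-perturbation} to Theorem~\ref{DBthm:small-biLip-perturbation-cone}, applied to the open set \(U:=\operatorname{int}K\), which is a bounded Lipschitz domain by definition. I would take \(L_K:=L_U=(1-\cos\theta_0)/2\), where \((\theta_0,h_0)\) are the parameters of a uniform two-sided cone condition for \(U\) supplied by Lemma~\ref{lem:cone-characterization-lipschitz}.

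Let \(F:K\to\R^n\) satisfy \(\operatorname{Lip}(F)<L_K\). By Kirszbraun's theorem, \(F\) extends to all of \(\R^n\) with the same Lipschitz constant. As in the proof of Theorem~\ref{DBthm:small-biLip-perturbation-cone}, \(T=\Id+F\) is then a bi-Lipschitz homeomorphism of \(\R^n\), with
\[
  (1-\alpha)|x-y|\le|T(x)-T(y)|\le(1+\alpha)|x-y| .
\]
This already gives injectivity on \(K\). Applying the theorem to \(U\) shows that \(T(U)\) is a bounded Lipschitz domain.

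It then remains to identify \(T(K)\) with the closure of a bounded Lipschitz domain whose interior is \(T(U)\). This is a topological step. Since \(T\) is a homeomorphism of \(\R^n\), it commutes with interiors and closures:
\[
  T(\operatorname{int}K)=\operatorname{int}T(K),
  \qquad
  T(\overline U)=\overline{T(U)} .
\]
Hence
\[
  T(K)=T(\overline U)=\overline{T(U)}=\overline{\operatorname{int}T(K)} .
\]
So \(T(K)\) is compact, being a continuous image of a compact set, and regular closed, and its interior \(T(U)\) is a bounded Lipschitz domain. By definition, \(T(K)\) is a compact Lipschitz domain.

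The only point needing care is to use the global homeomorphism property on \(\R^n\), which comes from the Kirszbraun extension, rather than just injectivity on \(K\). Without it, interiors need not be mapped onto interiors; invariance of domain would also work, but the global homeomorphism makes it immediate. I expect no real obstacle beyond this: the threshold \(L_K\) depends only on the cone angle of \(\operatorname{int}K\), as stated.
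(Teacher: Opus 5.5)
Your proposal is correct and follows the paper's proof essentially verbatim. Like the paper, you extend $F$ by Kirszbraun, apply Theorem~\ref{DBthm:small-biLip-perturbation-cone} to $U=\operatorname{int}K$, and use that the resulting global bi-Lipschitz homeomorphism $T$ of $\R^n$ commutes with closures and interiors, so that $T(K)=\overline{T(U)}$ and $\operatorname{int}T(K)=T(U)$; your explicit choice $L_K=L_{\operatorname{int}K}$ and your derivation of injectivity on $K$ from the lower bi-Lipschitz bound simply spell out what the paper's short argument leaves implicit.
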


\begin{proof}
This follows immediately from
Theorem~\ref{DBthm:small-biLip-perturbation-cone} applied to
\(U=\operatorname{int}K\), after extending \(F\) to \(\R^n\) without increasing
its Lipschitz constant. Indeed, the resulting map \(T\) is a global
bi-Lipschitz homeomorphism, and therefore
\[
  T(K)=T(\overline U)=\overline{T(U)},
  \qquad
  \operatorname{int}T(K)=T(U).
\]
Thus
\[
  T(K)=\overline{\operatorname{int}T(K)},
\]
and \(\operatorname{int}T(K)=T(U)\) is a bounded Lipschitz domain. Hence
\(T(K)\) is a compact Lipschitz domain.
\end{proof}

{We next record} a basic approximation result which is tailored to the case
where only selected weak first-order derivatives are prescribed. The proof is
included because the approximation is global up to the boundary, while only the
listed derivatives are controlled.

\begin{lemma}[Approximation with prescribed weak derivatives]\label{lem:one-directional-approximation}
Let \(U\subset\R^m\) be a bounded Lipschitz domain, let
\(\mathcal I\subset\{1,\ldots,m\}\) be nonempty, and let
\(u\in C(\overline U)\). Assume that, for every \(i\in\mathcal I\), the weak
derivative \(D_i u\) of \(u\) exists and is represented by \(g_i\in L^1(U)\), i.e.
\[
  \int_U u\,D_i\zeta\,dz
  =
  -\int_U g_i\,\zeta\,dz
  \qquad
  \text{for all }\zeta\in C_c^\infty(U).
\]
Then there exists a sequence \(u_j\in C^\infty(\R^m)\) such that
\[
  u_j\to u
  \qquad\text{uniformly on }\overline U,
\]
and
\[
  D_i u_j\to g_i
  \qquad\text{in }L^1(U)
  \quad\text{for every }i\in\mathcal I.
\]
In particular, if \(\mathcal I=\{1,\ldots,m\}\), then \(u\in W^{1,1}(U)\) and
the same sequence satisfies
\[
  u_j\to u
  \qquad\text{in }W^{1,1}(U),
\]
in addition to the uniform convergence on \(\overline U\).
\end{lemma}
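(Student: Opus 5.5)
The plan is to mollify $u$ and repair the boundary defect by a local translation into the domain. First localize. Since $U$ is a bounded Lipschitz domain, cover $\overline U$ by finitely many open sets $W_0\Subset U$ and boundary charts $W_1,\dots,W_N$. In each chart $W_k$, after a rigid motion, $U\cap W_k$ is the supergraph of a Lipschitz function. Then choose a smooth partition of unity $\{\eta_k\}$ subordinate to this cover.

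Set $u_k:=\eta_k u$. Each $u_k$ is continuous on $\overline U$, and for $i\in\mathcal I$ it has weak derivative $D_iu_k=\eta_k g_i+(D_i\eta_k)u\in L^1(U)$. This follows by testing with $\eta_k\zeta$; here only the listed derivatives of $u$ are used, and the derivative falling on $\eta_k$ is harmless because $u$ is continuous. It therefore suffices to approximate each $u_k$ separately and sum, since the convergences are linear.

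For $k=0$, take standard mollification $u_0*\varrho_\delta$. Its support stays inside $U$ for small $\delta$. Uniform convergence follows from uniform continuity of $u_0$. For each $i\in\mathcal I$ we have $D_i(u_0*\varrho_\delta)=(D_iu_0)*\varrho_\delta\to D_iu_0$ in $L^1$, using the weak-derivative identity with the test function $\varrho_\delta(x-\cdot)\in C_c^\infty(U)$.

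For a boundary piece $k\ge1$, let $e$ be the chart's inward graph direction, so that a fixed cone around $e$ points into $U$. Define $u_k^\delta(x):=(u_k*\varrho_\delta)(x+\lambda\delta e)$, with $\lambda$ larger than $1+{}$(Lipschitz constant). Then for $x\in\overline U\cap W_k$ the ball $B_\delta(x+\lambda\delta e)$ lies in $U$. Consequently the weak derivative identity applies to the test function $\varrho_\delta(x+\lambda\delta e-\cdot)\in C_c^\infty(U)$. This gives $D_iu_k^\delta=(D_iu_k*\varrho_\delta)(\cdot+\lambda\delta e)$ on $\overline U\cap W_k$, where $D_iu_k$ is extended by zero only after this identity has been established.

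The convergences then follow from standard facts. Uniform convergence on $\overline U$ comes from uniform continuity of $u_k$. The $L^1(U)$ convergence of the derivatives comes from continuity of translations in $L^1$ combined with $L^1$ convergence of mollifiers. The functions $u_k^\delta$ are smooth on $\R^m$ as defined.

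The main obstacle is exactly this boundary step. Only the derivatives in $\mathcal I$ are known, so we cannot invoke the usual $W^{1,1}$ density theorem. We also cannot extend by zero before mollifying, because $u_k$ need not vanish on $\partial U$ and a boundary jump would create a singular measure in $D_iu_k$.

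The inward shift avoids this by never testing across $\partial U$. What has to be checked carefully is the uniform inclusion $B_\delta(x+\lambda\delta e)\subset U$ for all $x\in\overline U\cap\operatorname{supp}\eta_k$. This is a direct consequence of the cone condition of Lemma~\ref{lem:cone-characterization-lipschitz} with a fixed direction per chart. Finally, if $\mathcal I=\{1,\dots,m\}$, then the $L^1$ convergence of all first derivatives together with uniform convergence gives $u\in W^{1,1}(U)$ and $u_j\to u$ in $W^{1,1}(U)$.
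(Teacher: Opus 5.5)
Your proof is correct, but it takes a different route from the paper.

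\textbf{Your route.} You localize with a partition of unity. On each boundary chart you use the classical ``shift inward, then mollify'' device, as in density proofs of $C^\infty(\overline U)$ in $W^{1,p}(U)$ under the segment condition. As a result, $D_iu_k^\delta$ is an exact translated mollification of $D_iu_k$ on all of $\overline U\cap W_k$.

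\textbf{The paper's route.} The paper avoids both the partition of unity and the shift. It extends $u$ by Tietze to $\widetilde u\in C(\R^m)$ and mollifies globally. It then uses the identity $D_iu_\varepsilon=\eta_\varepsilon*g_i$ only on the interior set $U_{2\varepsilon}$. The boundary layer $R_{2\varepsilon}$ is handled by the crude pointwise bound $|D_iu_\varepsilon|\le C\varepsilon^{-1}\omega_{\widetilde u}(\varepsilon)$, combined with $\mathcal L^m(R_{2\varepsilon})\le C\varepsilon$ and absolute continuity of $\int|g_i|$.

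\textbf{What each buys.} The paper's argument is shorter, and it uses the Lipschitz geometry only through the $O(\delta)$ volume of the boundary layer. However, it relies essentially on the uniform continuity of $u$ up to $\partial U$ for the convergence of the derivatives. Your argument uses the graph structure more substantially. In exchange, continuity enters only in the uniform convergence, so your derivative argument works verbatim for merely integrable $u$.

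\textbf{Two small points to tidy.} First, the inclusion $B_\delta(x+\lambda\delta e)\subset U$ is best read off directly from the graph representation, which gives $z_m-\varphi(z')>(\lambda-1-\operatorname{Lip}\varphi)\delta$. The cone lemma is less suited here, since its axes $\nu_x$ vary with $x$. Second, the inclusion is needed on a $(1+\lambda)\delta$-neighborhood of $\operatorname{supp}\eta_k$ in $\overline U$, not just on $\operatorname{supp}\eta_k$. Also, $u_k$ should be extended by zero outside $\overline U$ so that $u_k*\varrho_\delta$ is defined on $\R^m$.
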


\begin{proof}
Let \(\eta\in C_c^\infty(B_1(0))\) be a standard mollifier and set
\[
  \eta_\varepsilon(z):=\varepsilon^{-m}\eta(z/\varepsilon).
\]
By the Tietze extension theorem, extend \(u\) from \(\overline U\) to a
bounded continuous function \(\widetilde u\in C(\R^m)\). Let
\[
  u_\varepsilon:=\eta_\varepsilon*\widetilde u
\]
be the corresponding Friedrichs regularization in \(\R^m\). Then
\(u_\varepsilon\in C^\infty(\R^m)\). Since \(\overline U\) is compact,
\(\widetilde u\) is uniformly continuous on a compact neighborhood of
\(\overline U\), and therefore
\[
  u_\varepsilon\to u
  \qquad\text{uniformly on }\overline U .
\]

It remains to prove convergence of the weak derivatives. For
\(\delta>0\), set
\[
  U_\delta:=\{z\in U:\operatorname{dist}(z,\partial U)>\delta\},
  \qquad
  R_\delta:=U\setminus U_\delta .
\]
We first note that the boundary layer has volume of order \(\delta\). Since
\(U\) is a bounded Lip\-schitz domain, \(\partial U\) is compact and can be
covered by finitely many local Lipschitz graph neighborhoods. More precisely,
in each such neighborhood, after a rigid change of variables, there are
\(r,h>0\) and a Lipschitz function \(\phi:B'_r(0)\to\R\) such that
\[
  U\cap\bigl(B'_r(0)\times(-h,h)\bigr)
  =
  \{(y',y_m):y'\in B'_r(0),\ -h<y_m<h,\ y_m>\phi(y')\}.
\]
After shrinking these neighborhoods if necessary, \(R_\delta\) is contained in
their union for all sufficiently small \(\delta>0\). If
\(y=(y',y_m)\) lies within distance \(\delta\) of \(\partial U\), then for some
boundary point \(\widehat y=(\widehat y',\phi(\widehat y'))\) in the same local
coordinates one has \(|y-\widehat y|<\delta\). Hence
\[
\begin{aligned}
  |y_m-\phi(y')|
  &\le |y_m-\phi(\widehat y')|
      +|\phi(\widehat y')-\phi(y')|  \\
  &\le \delta+\operatorname{Lip}(\phi)\,|\widehat y'-y'|
  \le \bigl(1+\operatorname{Lip}(\phi)\bigr)\delta .
\end{aligned}
\]
Thus, in each local graph neighborhood, the boundary layer is contained in a
vertical strip of thickness \(O(\delta)\) around the graph of \(\phi\). Since
only finitely many such neighborhoods are needed, there is a constant \(C>0\)
such that
\[
  \mathcal L^m(R_\delta)\le C\delta
\]
for all sufficiently small \(\delta>0\). Here \(\mathcal L^m\) denotes
\(m\)-dimensional Lebesgue measure.

Fix \(i\in\mathcal I\), and extend \(g_i\) by zero outside \(U\). On
\(U_{2\varepsilon}\), the mollification only uses values inside \(U\), because
\(\eta_\varepsilon\) is supported in \(B_\varepsilon(0)\). Thus, using the weak
identity \(D_i u=g_i\) in \(U\), we have
\[
  D_i u_\varepsilon=\eta_\varepsilon*g_i
  \qquad\text{on }U_{2\varepsilon}.
\]
The \(L^1\)-continuity of Friedrichs mollification gives
\[
  \int_{U_{2\varepsilon}}
  |D_i u_\varepsilon-g_i|\,dz
  \le
  \|\eta_\varepsilon*g_i-g_i\|_{L^1(\R^m)}
  \to0 .
\]

It remains to estimate the boundary layer \(R_{2\varepsilon}\). Let
\(\omega_{\widetilde u}\) denote the modulus of continuity of
\(\widetilde u\) on a fixed compact neighborhood of \(\overline U\). Since
\(\eta_\varepsilon\in C_c^\infty(\R^m)\), integration by parts on \(\R^m\)
without boundary terms at infinity gives
\[
  \int_{\R^m}D_i\eta_\varepsilon(y)\,dy=0.
\]
Therefore
\[
  |D_i u_\varepsilon(z)| =
  \left|
    \int_{\R^m}D_i\eta_\varepsilon(y)
    \bigl(\widetilde u(z-y)-\widetilde u(z)\bigr)\,dy
  \right|  \le C\varepsilon^{-1}\omega_{\widetilde u}(\varepsilon).
\]
Using \(\mathcal L^m(R_{2\varepsilon})\le C\varepsilon\), this gives
\[
  \int_{R_{2\varepsilon}}|D_i u_\varepsilon|\,dz
  \le C\omega_{\widetilde u}(\varepsilon)\to0.
\]
Moreover, since \(g_i\in L^1(U)\) and
\(\mathcal L^m(R_{2\varepsilon})\to0\), the absolute continuity of the
Lebesgue integral yields
\[
  \int_{R_{2\varepsilon}}|g_i|\,dz\to0.
\]
Combining the interior and boundary-layer estimates yields
\[
  D_i u_\varepsilon\to g_i
  \qquad\text{in }L^1(U)
\]
for every \(i\in\mathcal I\). Since \(\mathcal I\) is finite, the same sequence
works for all prescribed weak derivatives simultaneously. Choosing any
sequence \(\varepsilon_j\downarrow0\) and setting
\(u_j:=u_{\varepsilon_j}\) proves the assertion.

If \(\mathcal I=\{1,\ldots,m\}\), then all first-order weak derivatives of
\(u\) belong to \(L^1(U)\), hence \(u\in W^{1,1}(U)\). The uniform convergence
on \(\overline U\) implies convergence in \(L^1(U)\), and the weak-derivative
convergence gives convergence in \(W^{1,1}(U)\).
\end{proof}

{
\begin{lemma}[Uniform finite-face and gluing criterion]
\label{lem:uniform-finite-face-gluing}
Let $\{U_\lambda\}_{\lambda\in\Lambda}$ be a family of bounded open
sets in $\R^m$. Assume that, near every point of $\partial U_\lambda$, the
local set is the intersection of the inward sides of at most $N$ Lipschitz
faces which admit a common graph direction in the following uniform sense:
after a rigid change of variables $y=(y',y_m)$, there are functions
$g_j$, $j=1,\ldots,k$, $k\le N$, with
$\operatorname{Lip}(g_j)\le L$, such that locally
\[
        U_\lambda
        =\{(y',y_m):y_m>g_j(y')\text{ for }j=1,\ldots,k\}.
\]
The reflected alternative, with all inequalities reversed, is also allowed. Then every $U_\lambda$
is a bounded Lipschitz domain. The opening angle in the two-sided cone condition
can be chosen depending only on $L$ and $N$; the cone height may depend on the
local coordinate neighborhood.

The criterion also applies to the \emph{regularized union}
\[
U:=\operatorname{int}\overline{U_1\cup U_2}
\]
obtained by gluing two such domains along a common relatively open boundary face,
provided that the two domains lie on opposite sides of the common face and that,
at the edge of that face, the remaining active boundary faces admit a common
graph direction as above. The relative interior of the common face is then
contained in $U$ and disappears from $\partial U$.

The same statements hold for parameterized space-time sets in $\R\times
\R^m$, with time treated as an additional tangential variable. If a lateral
face is represented in such coordinates by
\[
        y_m=g(t,y'),
\]
where $y_m$ is a spatial coordinate and $g$ is Lipschitz, then its spatial normal
component is nonzero almost everywhere and its normal velocity satisfies
\[
        |V_n|\le \operatorname{Lip}_t(g)
        \qquad\text{a.e. on the face}.
\]
\end{lemma}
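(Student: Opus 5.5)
The plan is to prove Lemma~\ref{lem:uniform-finite-face-gluing} by reducing every assertion to a local Lipschitz graph representation and then appealing to Lemma~\ref{lem:cone-characterization-lipschitz}. The key observation is that an intersection of inward sides of finitely many graphs over a common direction is again a graph. Suppose that locally
\[
  U_\lambda=\{y_m>g_j(y'),\ j=1,\ldots,k\}.
\]
Then $U_\lambda=\{y_m>g(y')\}$ with $g:=\max_j g_j$. Since a pointwise maximum of $L$-Lipschitz functions is $L$-Lipschitz, $g$ is $L$-Lipschitz. The reflected alternative uses $\min_j g_j$ in the same way. Every boundary point therefore has a Lipschitz graph chart, so $U_\lambda$ is a bounded Lipschitz domain.

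For the uniformity claim, I would give an explicit cone at each boundary point. A graph with $\operatorname{Lip}(g)\le L$ admits the two-sided cone $\pm\Gamma_{\theta,h}(e_m)$ with $\cot\theta=L$, that is $\theta=\arctan(1/L)$. Hence the opening angle depends only on $L$, and in fact not even on $N$. The height $h$ is limited only by the size of the chart cylinder, which explains why it may depend on the local neighbourhood.

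For the regularized union I would treat interior and edge points of the common face separately. At a relative interior point of the common face $S$, $U_1$ lies on one side of $S$ and $U_2$ on the other, locally as $\{y_m>s(y')\}$ and $\{y_m<s(y')\}$. The closure of their union therefore contains a full neighbourhood, so the point lies in $U=\operatorname{int}\overline{U_1\cup U_2}$. This shows that the relative interior of $S$ is contained in $U$ and drops out of $\partial U$. At points of $\partial U$ away from $\overline S$, the set $U$ locally coincides with $U_1$ or $U_2$, and the first part applies. At edge points of $S$, the hypothesis provides a common graph direction for the remaining active faces. I would argue that, locally, $\overline{U_1\cup U_2}$ equals the closure of $\{y_m>g_j,\ j\le k\}$ for those remaining faces, which is again a max-graph. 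The main obstacle is precisely this identification: one must check that the removed common face does not leave slits. I would handle it using the opposite-sides assumption, which guarantees that both pieces fill the region above $\max_j g_j$ near the edge.

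The space-time statements follow by the same argument with $(t,y')$ as tangential variables. For the normal velocity, consider a lateral face $y_m=g(t,y')$ with $g$ Lipschitz. By Rademacher's theorem $g$ is differentiable almost everywhere. There the normal is proportional to $(-\partial_t g,-\nabla_{y'}g,1)$, and its spatial component $(-\nabla_{y'}g,1)$ is nonzero. Taking the definition of $V_n$ from Section~\ref{BIF-RTT},
\[
  V_n=\frac{\partial_t g}{\sqrt{1+|\nabla_{y'}g|^2}},
\]
up to the orientation sign. Hence $|V_n|\le|\partial_t g|\le\operatorname{Lip}_t(g)$ almost everywhere. Since $g$ is Lipschitz, the chart map $(t,y')\mapsto(t,y',g(t,y'))$ is Lipschitz, so it carries $\mathcal H^{m-1}$-null sets of parameters to $\mathcal H^{m}$-null subsets of the face. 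The nondifferentiability set is therefore $\mathcal H^m$-null on the face, which completes the argument.
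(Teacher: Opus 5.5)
Your proposal is correct and follows essentially the same route as the paper. You take $\max_j g_j$ (or $\min_j g_j$) as a single $L$-Lipschitz graph, fill in the common face at its relative interior points, and reduce the edge points to simultaneous graph inequalities for the remaining active faces; for the normal velocity you use Rademacher's theorem, giving the normal direction $(-\partial_t g,-\nabla_{y'}g,1)$ and hence $|V_n|\le|\partial_t g|\le\operatorname{Lip}_t(g)$. Your explicit cone angle $\arctan(1/L)$ and the null-set remark are welcome additions, with one small correction: the parameter space $(t,y')$ is $m$-dimensional, so the Rademacher exceptional set is $\mathcal L^m$-null (not $\mathcal H^{m-1}$-null), and that is exactly what the Lipschitz chart needs to yield an $\mathcal H^m$-null subset of the face.
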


\begin{proof}
In the first alternative, the local component of $U_\lambda$ is
\[
        \{(y',y_m):y_m>g(y')\},
        \qquad g(y'):=\max_{1\le j\le k}g_j(y').
\]
The function $g$ is Lipschitz and
$\operatorname{Lip}(g)\le L$. In the reflected alternative one uses
$\min_j g_j$. Thus every boundary point has a Lipschitz graph neighborhood.
The standard graph-to-cone construction gives a cone opening angle depending
only on $L$; allowing at most $N$ faces does not change this bound. Compactness
of each boundary supplies a finite cover and hence a positive cone height for
each fixed domain.

For the gluing assertion, a point in the relative interior of the common face
has a neighborhood in which the two domains occupy opposite sides. Passing to
$U=\operatorname{int}\overline{U_1\cup U_2}$ fills in that face, which therefore
disappears from the boundary. At an edge point, the boundary of the regularized
union consists precisely of the remaining active faces. By the
common-direction hypothesis their inward sides are described by simultaneous
graph inequalities, and the first part applies.

For the space-time assertion the same maximum/minimum argument is used in
$\R^{m+1}$. By Rademacher's theorem, at almost every point of the graph
$y_m=g(t,y')$ a normal is proportional to
\[
        (-\partial_t g,-\nabla_{y'}g,1).
\]
Consequently its spatial component is nonzero, and, with the sign convention of
Definition~\ref{def:speed-of-normal-displacement},
\[
        |V_n|
        =\frac{|\partial_t g|}{\sqrt{1+|\nabla_{y'}g|^2}}
        \le |\partial_t g|
        \le \operatorname{Lip}_t(g)
\]
almost everywhere. This proves all assertions.
\end{proof}
}

\section{Explicit structure of the attainable sets}\label{app:attainable-sets-cusp}

As explained in Section~\ref{sec:kinematics}, we use the elementary field
\eqref{eq-counter-ex} here because it permits fully explicit formulas. The modified
constant-viscosity, constant-surface-tension field \eqref{eq:localized-traces} retains the
same local non-uniqueness and tangential-departure mechanism at $(0,0)$. It also retains
the finite-time tangential-contact mechanism relevant for the later cusp formation. Indeed,
for $\eta=1$ the upper-phase field has the first integral
\[
        H(x_1,x_2)=x_2+\frac12x_1x_2^2-\frac12x_1^2.
\]
The separatrix $H=0$ through the origin meets the upper part of the boundary of the same
initial disk at a second point and reaches $(0,0)$ tangentially in finite time. Indeed, writing
$x_1=-u$ on this branch gives $x_2=u^2/(1+\sqrt{1-u^3})$ and
$\dot x_1=\sqrt{1-u^3}>0$; comparison with the upper semicircle at $u=0.8$ and $u=0.9$
brackets a second intersection. We do not need the full attainable-set geometry for this
modified field and therefore carry out the detailed cusp calculations only for the simpler
field \eqref{eq-counter-ex}.

Consider the differential inclusion
\begin{equation}
\dot x_1(t)\in \mathrm{Sgn}(x_2(t)),\qquad
\dot x_2(t)=x_1(t).
\label{eq:DI}
\end{equation}
Here \(\mathrm{Sgn}(r)=\{-1\}\) for \(r<0\), \(\mathrm{Sgn}(0)=[-1,1]\),
and \(\mathrm{Sgn}(r)=\{1\}\) for \(r>0\).
The initial set is the disk
\[
G_0=\{(a,b)\in\R^2:(a+\tfrac12)^2+b^2\le\tfrac14\}.
\]
For \(t\ge0\) denote by
\[
G(t)=\{x(t):x(\cdot)\text{ is a strong solution of \eqref{eq:DI} with }x(0)\in G_0\}
\]
the attainable set. Away from \(x_2=0\) the right-hand side of \eqref{eq:DI} is single-valued. 
Solutions starting from \((a,b)\) satisfy
\[
\begin{aligned}
x_1(s)&=a+s, & x_2(s)&=b+as+\tfrac12 s^2, && (b>0),\\
x_1(s)&=a-s, & x_2(s)&=b+as-\tfrac12 s^2, && (b<0).
\end{aligned}
\]
Define
\[
\Phi_t^-(a,b)=(a-t,\ b+ta-\tfrac12 t^2),
\qquad
\Phi_t^+(a,b)=(a+t,\ b+ta+\tfrac12 t^2).
\]
If \(b>0\) and the trajectory crosses \(x_2=0\) before time \(t\), the first crossing time is
\[
\tau(a,b)=-a-\sqrt{a^2-2b},
\]
and the endpoint becomes
\[
\Psi_t(a,b)=
\left(
a-t+2\tau(a,b),\
(a+\tau(a,b))(t-\tau(a,b))-\tfrac12 (t-\tau(a,b))^2
\right).
\]
Solutions that reach the origin may remain there for an arbitrary time and then leave with velocity \((\pm1,0)\). 
Their endpoints form
\[
P_t^\pm=\{(\pm r,\pm\tfrac12 r^2):0\le r\le t\}.
\]

Define
\[
m_t(a)=
\begin{cases}
-at-\dfrac{t^2}{2}, & \;\; a\le -t,\\[1mm]
\dfrac{a^2}{2}, & -t\le a\le0 ,
\end{cases}
\]
and
\[
\begin{aligned}
G_0^- &=\{(a,b)\in G_0:\ b\le 0\},\\
G_0^+(t) &=\{(a,b)\in G_0:\ b>0,\ b\ge m_t(a)\},\\
G_0^\times(t) &=\{(a,b)\in G_0:\ b>0,\ b<m_t(a)\}.
\end{aligned}
\]
With these notations, the attainable set decomposes as
\begin{equation}
G(t)=
\Phi_t^-(G_0^-)
\;\cup\;
\Phi_t^+(G_0^+(t))
\;\cup\;
\Psi_t(G_0^\times(t))
\;\cup\;
P_t^+
\;\cup\;
P_t^- .
\label{eq:structure}
\end{equation}

The qualitative mechanism behind the transition is already visible from the quadrant structure. 
In the upper half-plane one has \(\dot x_1=1\), whereas in the lower half-plane one has \(\dot x_1=-1\). 
Moreover, the normal velocity at the switching line is \(\dot x_2=x_1\). 
Thus, for \(x_1<0\), trajectories may cross downward from the upper to the lower phase, while for \(x_1>0\), trajectories may cross upward from the lower to the upper phase. 
In this sense the second and fourth quadrants are evacuated by phase change into the third and first quadrants, respectively. 
The critical time is therefore not the purely horizontal time scale \(1\), but the time at which the last tangential boundary trajectory reaches the origin.

Solutions that hit \(x_2=0\) transversally enter the opposite phase and allow for a unique continuation locally in time. 
Possible non-uniqueness can only occur for solutions that touch \(x_2=0\) tangentially. 
Solutions that start in \((a,b)\) with \(b>0\) can only reach \(x_2=0\) tangentially if
\[
b=\frac{a^2}{2}.
\]
Intersecting this curve with \(\partial G_0\) yields
\[
(a+\tfrac12)^2+\frac{a^4}{4}=\frac14
\quad \Leftrightarrow \quad
a\,(a^3+4a+4)=0.
\]
The root \(a=0\) corresponds to the origin, where the non-unique stationary branch may start. 
The nontrivial point relevant for the first tangential contact of the transported boundary is determined by
\[
a^3+4a+4=0.
\]
Since
\[
\frac{d}{da}(a^3+4a+4)=3a^2+4>0
\quad\text{for all } a\in\R,
\]
this cubic has a unique real root. 
Let \(a_0\in(-1,0)\) be this root and set
\[
t_*:=-a_0.
\]
By Cardano's formula,
\[
a_0=
\sqrt[3]{-2+\sqrt{\frac{172}{27}}}
+
\sqrt[3]{-2-\sqrt{\frac{172}{27}}}
\approx -0.8477076.
\]
The trajectory starting at
\[
p_*=(a_0,\tfrac12 a_0^2)\in\partial G_0
\]
is tangent to the switching line \(x_2=0\) and satisfies
\[
x_1(s)=a_0+s,\qquad
x_2(s)=\frac12(a_0+s)^2.
\]
Hence it reaches the origin exactly at time \(t_*\). 
For \(t\ge t_*\), the remaining time \(t-t_*\) allows the solution to leave the origin with horizontal velocity \(\pm1\), producing the points
\[
q_\pm(t)=(\pm(t-t_*),\pm\tfrac12(t-t_*)^2).
\]
These are precisely the attachment points where the outer boundary arcs meet the curves \(P_t^\pm\).

\begin{theorem}[Geometry of the attainable sets]
\label{thm:geometry}
Let \(G(t)\) be defined by \eqref{eq:structure}. Then the following assertions hold.

\begin{enumerate}

\item For \(0<t<t_*\), the set \(G(t)\) has one connected component with nonempty interior.

\item For \(0<t\le t_*\), the lower boundary of the part of \(G(t)\) with \(x_1\ge0\) is
\[
x_2=\frac{x_1^2}{2},\qquad 0\le x_1\le t .
\]
For \(t>t_*\), only the truncated parabolic segment
\[
\{(r,\tfrac12 r^2):t-t_*\le r\le t\}
\]
bounds the right two-dimensional component.

\item For every \(0<t<t_*\), the lower-phase part
\[
G^-(t):=G(t)\cap\{x_2<0\}
\]
has a cusp at the origin. More precisely, near the origin its boundary contains the two tangentially meeting arcs
\[
\{(x_1,0):-\varepsilon<x_1<0\}
\quad\text{and}\quad
\{(x_1,-\tfrac12 x_1^2):-\varepsilon<x_1<0\}
\]
for sufficiently small \(\varepsilon>0\).

\item At \(t=t_*\), the boundary of \(G(t)\) develops a cusp at the origin.

\item For \(t>t_*\), the interior of \(G(t)\) has two connected components, denoted by
\(G_+(t)\) and \(G_-(t)\). The full attainable set is obtained from the closures of these components together with the one-dimensional connector
\[
P_t^*:=\{(\pm r,\pm\tfrac12 r^2):0\le r\le t-t_*\}.
\]
In other words,
\[
G(t)=\overline{G_+(t)}\cup\overline{G_-(t)}\cup P_t^*.
\]

\item The two-dimensional Lebesgue measure of the interior of \(G(t)\) is independent of \(t\).

\end{enumerate}
\end{theorem}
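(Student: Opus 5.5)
The plan is to work directly from the decomposition \eqref{eq:structure} and the explicit maps \(\Phi_t^\pm\), \(\Psi_t\). Each piece is the image of an explicit region of \(G_0\) under an explicit polynomial or square-root map. Some of these maps are area preserving: \(\Phi_t^\pm\) is a shear composed with a translation, with Jacobian determinant \(1\). For \(\Psi_t\) we compute \(\partial\tau/\partial a\), \(\partial\tau/\partial b\) from \(\tau^2+2a\tau+2b=0\), and check \(\det D\Psi_t=1\) as well, which is expected because the piecewise field is divergence free and the crossing is transversal on \(G_0^\times(t)\). The sets \(P_t^\pm\) are null sets. The images of the three two-dimensional pieces are pairwise disjoint up to null sets: the maps are restrictions of a single injective transversal flow to disjoint initial regions, and the branches through the origin contribute only \(P_t^\pm\). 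Granting this, assertion (6) follows, since \(|\operatorname{int}G(t)|=|G_0^-|+|G_0^+(t)|+|G_0^\times(t)|=|G_0|\).

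For (2) and (3) I would track the images of the boundary curves \(b=0\) and \(b=m_t(a)\). The lower-phase piece \(\Phi_t^-(G_0^-)\) contains the image of the segment \(\{b=0,-1\le a\le 0\}\), namely \(\{(a-t,ta-\tfrac12t^2)\}\). Near the origin, however, the lower boundary of \(G(t)\cap\{x_2<0\}\) is produced by \(\Psi_t\) applied to points of \(G_0^\times(t)\) near the tangency curve \(b=a^2/2\). Setting \(b=a^2/2\) gives \(\tau=-a\), so \(\Psi_t(a,a^2/2)=(-a-t,\,-\tfrac12(t+a)^2)\) with \(x_1=-(t+a)\). This is exactly the arc \(x_2=-\tfrac12 x_1^2\) for \(x_1<0\) small, whereas the upper boundary of the lower-phase part is the switching line \(x_2=0\) itself. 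Both arcs pass through the origin with horizontal tangent, which gives the cusp in (3); this requires \(t<t_*\), so that points with \(a\) close to \(0^-\) and \(b=a^2/2\) still lie in \(G_0\). Similarly, for (2) the boundary \(b=m_t(a)=a^2/2\) is mapped by \(\Phi_t^+\) to \((a+t,\tfrac12(a+t)^2)\), which is the parabola \(x_2=x_1^2/2\) for \(0\le x_1\le t\). For \(t>t_*\) this parabola is truncated at \(a=a_0\), giving \(r\ge t-t_*\), while the part with \(r<t-t_*\) is covered only by \(P_t^+\).

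For (1), (4) and (5) the key object is the tangency trajectory from \(p_*\). For \(t<t_*\) the union of the pieces is connected with connected interior: the image of the corner region near \((a_0,a_0^2/2)\) still provides an open neighborhood linking the left upper region to the right region. I would show this by exhibiting an interior path through \(\Phi_t^+(G_0^+(t))\) above the parabola. At \(t=t_*\) this open neighborhood pinches to the single point at the origin, between the arcs \(x_2=\tfrac12x_1^2\) and \(x_2=-\tfrac12 x_1^2\), which gives the cusp in (4). For \(t>t_*\) the pinch point has split into the two attachment points \(q_\pm(t)\), and only \(P_t^*\) connects the two sides. Consequently the interior has two components \(G_\pm(t)\), and \(G(t)=\overline{G_+}\cup\overline{G_-}\cup P_t^*\).

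The main obstacle is making this pinching argument rigorous: we must prove that the boundary arcs from \(\Psi_t\), \(\Phi_t^\pm\) and \(P_t^\pm\) meet exactly at \(q_\pm(t)\), with no interior overlap. Before \(t_*\) we must show that a strip of positive width persists, and after \(t_*\) that no interior path crosses the region between the two components. I would attempt this by monotonicity of the explicit boundary parametrizations in \(a\), using that \(a^3+4a+4\) has the single root \(a_0\).
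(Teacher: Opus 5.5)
\paragraph{Assessment.}
Your argument for (6) is the paper's: unit Jacobians of $\Phi_t^\pm$ and $\Psi_t$, disjointness of the three images by uniqueness away from tangential contacts, and the area formula. Your parametrized arcs in (2) and (3) are the right ones.

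\paragraph{Main gap: (4) and especially (5).}
You defer the pinch-off at $t_*$ and the separation for $t>t_*$ to an unspecified ``monotonicity of the boundary parametrizations''. Knowing where images of boundary curves lie does not show that no other piece of \eqref{eq:structure} enters the region in between. So it does not yet show that the interior disconnects.

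The missing idea is a bound on the time at which trajectories from $G_0$ can reach the switching line. An upper trajectory hitting $\{x_2=0\}$ at time $\tau$ at $(c,0)$ has $c\le0$ and initial point $(c-\tau,\,-c\tau+\tfrac12\tau^2)$. For $\tau\ge t_*$ the quantity $(c-\tau+\tfrac12)^2+(-c\tau+\tfrac12\tau^2)^2$ is strictly decreasing in $c\le0$. Membership in $G_0$ therefore forces $(\tfrac12-\tau)^2+\tfrac14\tau^4\le\tfrac14$, i.e.\ $\tau^3+4\tau-4\le0$. Hence $\tau\le t_*$, with equality only for $c=0$, i.e.\ only for the tangency trajectory from $p_*$; this is what pins down (4).

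For $t>t_*$ the bound gives a separation in $x_1$ alone:
\begin{itemize}
\item crossed endpoints satisfy $x_1=c-(t-\tau)\le-(t-t_*)$;
\item endpoints from $G_0^-$ satisfy $x_1\le-t$;
\item non-crossing upper endpoints satisfy $x_1=a+t\ge t-t_*$. For $a<a_0$ the upper arc of $\partial G_0$ lies below $b=\tfrac12a^2$, so every such initial point crosses, and by the bound it does so by time $t_*<t$.
\end{itemize}
Thus $G(t)\cap\{|x_1|<t-t_*\}\subset P_t^*$, and this is what splits the interior. No Jordan-curve bookkeeping of boundary arcs is needed.

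\paragraph{Same issue in (2) and (3).}
The image of a boundary curve of one piece is part of $\partial G(t)$ only once you know the other pieces stay on one side of it.

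For (2), the paper uses that crossed endpoints have $x_1=(a+\tau)-(t-\tau)<0$, since $a+\tau=-\sqrt{a^2-2b}\le0$. Non-crossing endpoints with $x_1>0$ have $a>-t$, hence $b\ge\tfrac12a^2$ and $x_2=b+ta+\tfrac12t^2\ge\tfrac12x_1^2$.

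For (3), you must show two things: that $\{-\varepsilon<x_1<0,\ -\tfrac12x_1^2<x_2<0\}$ is actually filled, and that nothing lies below it.
\begin{itemize}
\item The paper fills it with trajectories crossing at $(-r+\delta,0)$ at time $t-\delta$, for $0<\delta<r$.
\item Exclusion follows from $x_2=c\delta-\tfrac12\delta^2=-\tfrac12x_1^2+\tfrac12c^2$ for crossed endpoints. Meanwhile $\Phi_t^-(G_0^-)$ stays in $x_1\le-t$ and $\Phi_t^+(G_0^+(t))$ stays in $x_2\ge0$.
\end{itemize}
The filling trajectories start near $(-t,\tfrac12t^2)$, and the role of $t<t_*$ is exactly that this point lies in $\operatorname{int}G_0$.

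Your stated reason, that points $(a,\tfrac12a^2)$ with $a$ near $0^-$ lie in $G_0$, is off. Those points are mapped to $x_1\approx-t$, and they lie in $G_0$ for every $t$.

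Note also that the tangency curve belongs to $G_0^+(t)$, not to $G_0^\times(t)$. The arc $x_2=-\tfrac12x_1^2$ is $P_t^-$, which is only the closure of the $\Psi_t$-image, and it is present for every $t$. What $t<t_*$ secures is the filling up to $x_2=0$, not the lower arc.
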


\begin{proof}
The decomposition \eqref{eq:structure} follows directly from the explicit solutions of the two smooth subsystems together with the possibility of remaining at the origin and leaving it with velocity \((\pm1,0)\).

If \(b\le0\), then \(a\le0\) on \(G_0\). Apart from the origin, the trajectory immediately enters the lower phase and satisfies \(x_1(s)=a-s<0\) for \(s>0\). Hence \(x_2\) decreases strictly and the trajectory never reaches the switching line again. This yields the contribution \(\Phi_t^-(G_0^-)\); the origin is also the starting point of the one-dimensional branches \(P_t^\pm\).

If \(b>0\), the trajectory initially satisfies
\[
x_2(s)=b+as+\tfrac12s^2 .
\]
Minimizing this quadratic on \([0,t]\) gives the threshold \(m_t(a)\). Hence \(G_0^+(t)\) consists precisely of the upper initial points whose trajectories do not cross before time \(t\), while \(G_0^\times(t)\) consists of those which do cross. The corresponding endpoint maps are \(\Phi_t^+\) and \(\Psi_t\).

For \(0<t<t_*\), the tangency trajectory starting at \(p_*\) has not yet reached the origin. Thus the images of the upper, crossing and lower parts of \(\partial G_0\), together with the parabolic branches generated from the origin, form one connected region with nonempty interior.

We next identify the positive-side lower boundary. A crossed trajectory cannot contribute to the region \(x_1>0\). Indeed, if it crosses at time \(\tau<t\), then
\[
x_1(t)=a-t+2\tau=(a+\tau)-(t-\tau),
\]
and at the crossing point
\[
a+\tau=-\sqrt{a^2-2b}\le0.
\]
Therefore \(x_1(t)<0\). Thus every attainable point with \(x_1>0\) which belongs to the two-dimensional part is generated by a non-crossing upper trajectory. Writing \(x_1=a+t\), the condition \(x_1>0\) implies \(a>-t\), and the no-crossing condition is \(b\ge a^2/2\). Minimizing
\[
x_2(t)=b+ta+\tfrac12t^2
\]
under this constraint yields
\[
x_2(t)\ge
\frac{a^2}{2}+ta+\frac{t^2}{2}
=\frac{(a+t)^2}{2}
=\frac{x_1^2}{2}.
\]
Equality is attained by tangency trajectories. This proves the stated parabolic boundary. For \(t>t_*\), the part with \(0\le x_1\le t-t_*\) is no longer boundary of a two-dimensional component; it belongs to the one-dimensional connector.

We now consider the lower-phase part. Fix \(0<t<t_*\). Since \((-t,t^2/2)\) lies in the interior of \(G_0\) for \(t<t_*\), there is \(\varepsilon>0\) such that the following construction starts from points in \(G_0\). Let \(0<r<\varepsilon\) and \(0<\delta<r\). Put
\[
c=-r+\delta,\qquad \tau=t-\delta.
\]
Consider an upper trajectory which crosses the switching line at time \(\tau\) at the point \((c,0)\). Its initial point is
\[
a=c-\tau,\qquad b=-a\tau-\frac{\tau^2}{2},
\]
and, by the preceding choice of \(\varepsilon\), belongs to \(G_0\). After the crossing, the trajectory follows the lower dynamics during the remaining time \(\delta\). Hence its endpoint is
\[
x_1=c-\delta=-r,\qquad
x_2=c\delta-\frac12\delta^2
=-r\delta+\frac12\delta^2.
\]
As \(\delta\) varies in \((0,r)\), this fills precisely
\[
-\frac12r^2<x_2<0.
\]
Consequently, near the origin,
\[
\{(x_1,x_2):-\varepsilon<x_1<0,\ -\tfrac12x_1^2<x_2<0\}
\subset G^-(t).
\]
Conversely, any crossed upper trajectory ending near the origin in the lower half-plane
can be written in terms of its crossing time $\tau$, its crossing point $(c,0)$, and
$\delta:=t-\tau>0$. The other possible lower-phase contributions do not create additional
points sufficiently near the origin: if the initial point lies in $G_0$ with $b<0$, or on
$x_2=0$ with $a<0$, then $x_1(t)\le -t$, while solutions issuing from $(0,0)$ that enter
the lower phase form precisely the branch $P_t^-$. For a crossed upper trajectory one has
\[
        x_1=c-\delta,\qquad
        x_2=c\delta-\frac12\delta^2 .
\]
For a downward crossing one has $c\le0$. Writing $r:=-x_1=\delta-c>0$, this implies
$0\le\delta\le r$, and therefore
\[
        x_2=-r\delta+\frac12\delta^2\ge -\frac12 r^2
        =-\frac12 x_1^2 .
\]
Thus, in a sufficiently small neighborhood of the origin, no attainable point of the lower
phase lies below the parabola $x_2=-x_1^2/2$.

The upper boundary of this region is the interface \(x_2=0\), while the lower boundary is the parabolic branch \(x_2=-x_1^2/2\), generated by the lower branch \(P_t^-\). These two curves meet at the origin with common tangent \(x_2=0\). Therefore \(G^-(t)\) has a cusp at the origin for every \(0<t<t_*\).

We next make the critical-time separation explicit. Suppose an upper trajectory reaches the switching line at time \(\tau>0\) at the point \((c,0)\), where necessarily \(c\le0\). Its initial point is
\[
        a=c-\tau,\qquad b=-c\tau+\frac12\tau^2.
\]
Hence the condition \((a,b)\in G_0\) reads
\[
        (c-\tau+\tfrac12)^2+(-c\tau+\tfrac12\tau^2)^2\le\frac14.
\]
For \(\tau\ge t_*\), the left-hand side is strictly decreasing as a function of \(c\le0\), and is therefore bounded below by its value at \(c=0\). Consequently,
\[
        (\tfrac12-\tau)^2+\frac14\tau^4\le\frac14,
\]
which is equivalent to
\[
        \tau^3+4\tau-4\le0.
\]
Since \(t_*\) is the unique positive root of \(t^3+4t-4=0\), every such hitting time satisfies \(\tau\le t_*\), with equality only for \(c=0\), i.e. for the tangency trajectory starting at \(p_*\).

At \(t=t_*\), the unique trajectory realizing this last contact reaches the origin tangentially. Together with the positive-side and lower-phase parabolic boundary descriptions above, this shows that the two-dimensional parts arriving from the two sides meet there with the same tangent line and zero opening angle; equivalently, no uniform interior cone can be placed at the origin. Thus a cusp forms and the boundary of \(G(t)\) is no longer Lipschitz at that point.

For \(t>t_*\), every crossed upper trajectory has
\[
        x_1(t)=c-(t-\tau)\le -(t-t_*).
\]
The image of the lower initial part satisfies \(x_1(t)=a-t\le-t\), so it lies still farther to the left. On the other hand, a non-crossing upper trajectory must start with \(a>a_0\): for \(a<a_0\), the upper boundary of \(G_0\) lies strictly below the tangency curve \(b=a^2/2\), so every upper initial point crosses, while at \(a=a_0\) the equality case is precisely \(p_*\). Hence every two-dimensional non-crossing upper endpoint satisfies \(x_1=a+t>t-t_*\). Thus no two-dimensional part enters the central strip \(|x_1|<t-t_*\). The remaining time \(t-t_*\) after the tangency trajectory has reached the origin generates the points
\[
q_\pm(t)=(\pm(t-t_*),\pm\tfrac12(t-t_*)^2).
\]
Only the truncated parts of the parabolas with parameter \(r\in[t-t_*,t]\) bound regions of positive area. The central parts with \(0\le r\le t-t_*\) form the one-dimensional connector \(P_t^*\). Consequently, the interior has two connected components, whose closures meet this connector as stated.

Finally, the area assertion can be checked directly from the decomposition \eqref{eq:structure}. The affine maps $\Phi_t^\pm$ have determinant one. On $G_0^\times(t)$ the crossing map $\Psi_t$ is $C^1$ away from the tangency set, and direct differentiation of the formula above gives $\det D\Psi_t=1$. Apart from the one-dimensional set of initial data whose trajectories reach the origin tangentially, forward and backward continuation is unique; hence the three two-dimensional images in \eqref{eq:structure} are pairwise disjoint up to null sets. The area formula therefore yields
\[
\mathcal L^2(G(t)^\circ)
=\mathcal L^2(G_0^-)+\mathcal L^2(G_0^+(t))+\mathcal L^2(G_0^\times(t))
=\mathcal L^2(G_0).
\]
The branches generated from the origin and the relevant tangency sets are one-dimensional and do not change two-dimensional Lebesgue measure. This also makes explicit why transversal crossings cause no loss of area: the one-sided vector fields are divergence-free and have the same normal component $x_1$ at the switching line.
\end{proof}


\section{Surface change of variables under Lipschitz perturbations}\label{app:surface-change-of-variables}

We record the hypersurface change-of-variables formula in the form used in the
proof of Theorem~\ref{DBthm:2ph-transport-theorem-comovingCV}. The change-of-variables
identity itself is a standard consequence of the area formula for Lipschitz maps
on rectifiable sets; the estimates below are elementary consequences of the
smallness assumption on the Lipschitz perturbation. The point in the present
formulation is that the maps used in the proof are only Lipschitz perturbations
of the identity; their tangential derivatives therefore have to be understood
almost everywhere. We include the short proof to keep the precise form used
below self-contained. It uses the Euclidean area formula for Lipschitz maps in
local hypersurface charts; see, e.g., Leoni~\cite{leoni2017first}.

\begin{proposition}[Hypersurface area formula for Lipschitz perturbations]\label{prop:surface-area-formula-lipschitz}
Let $S\subset\R^n$, $n\ge2$, be an oriented $C^1$ hypersurface and let
$E\subset S$ be $\mathcal H^{n-1}$-measurable with finite
$\mathcal H^{n-1}$-measure. Let $\Psi=\Id+R$ be Lipschitz on a neighborhood of
$S$ and assume that $\Lip(R)\le\alpha<1$. Then $\Psi|_S$ is bi-Lipschitz onto
its image. For $\mathcal H^{n-1}$-almost every $\xi\in S$, the tangential
differential
\[
        D_\tau\Psi(\xi):T_\xi S\to\R^n
\]
exists. If $(\DBtau_1,\ldots,\DBtau_{n-1})$ is an oriented orthonormal basis of
$T_\xi S$, define
\[
        J_S\Psi(\xi)
        :=\sqrt{\det\bigl(G_\Psi(\xi)\bigr)},
        \qquad
        (G_\Psi)_{ij}(\xi)
        := D_\tau\Psi(\xi)\DBtau_i\cdot
           D_\tau\Psi(\xi)\DBtau_j .
\]
Equivalently, $J_S\Psi$ is the product of the $n-1$ singular values of
$D_\tau\Psi(\xi)$. This number is independent of the chosen oriented
orthonormal basis. For every measurable $f:\Psi(E)\to\R$ one has
\[
        \int_{\Psi(E)} f(\DBy)\,d\mathcal H^{n-1}(\DBy)
        =
        \int_E f(\Psi(\xi)) J_S\Psi(\xi)\,d\mathcal H^{n-1}(\xi),
\]
whenever one of the two integrals is well-defined.

Moreover, for $\mathcal H^{n-1}$-almost every $\xi\in S$,
\[
        (1-\alpha)^{n-1}\le J_S\Psi(\xi)
        \le (1+\alpha)^{n-1}.
\]
If, in addition, $\alpha\le1/2$, then there is a constant $C_n$, depending only
on $n$, such that
\[
        |J_S\Psi(\xi)-1|\le C_n\alpha .
\]
Under the same additional assumption, if $\DBn_S$ denotes the unit normal of $S$, then the transported unit normal
$\DBn_\Psi$ on $\Psi(S)$, with the orientation induced by $\Psi|_S$, satisfies
\[
        |\DBn_\Psi(\Psi(\xi))-\DBn_S(\xi)|\le C_n\alpha
\]
for $\mathcal H^{n-1}$-almost every $\xi\in S$.
\end{proposition}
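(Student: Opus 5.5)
The plan is to reduce the statement to the Euclidean area formula in local charts of \(S\). First, bi-Lipschitz: for \(\DBx,\DBy\) near \(S\),
\[
(1-\alpha)|\DBx-\DBy|\le|\Psi(\DBx)-\Psi(\DBy)|\le(1+\alpha)|\DBx-\DBy|,
\]
so \(\Psi|_S\) is injective and bi-Lipschitz onto its image. This is the same estimate as in the proof of Theorem~\ref{DBthm:small-biLip-perturbation-cone}.

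Next I cover \(S\) by countably many \(C^1\) charts \(\gamma:W\subset\R^{n-1}\to S\), each bi-Lipschitz onto its image. In a chart, \(\Psi\circ\gamma\) is Lipschitz on \(W\), so Rademacher's theorem gives differentiability a.e. on \(W\). Since \(\gamma\) is \(C^1\) with injective differential, the tangential differential \(D_\tau\Psi(\xi)\) exists for \(\mathcal H^{n-1}\)-a.e. \(\xi\). One subtle point is that \(\Psi\) need not be differentiable in \(\R^n\) at such \(\xi\). I define \(D_\tau\Psi(\xi)\) on \(T_\xi S\) through \(D(\Psi\circ\gamma)\,(D\gamma)^{-1}\). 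This definition is independent of the chart, because transition maps are \(C^1\).

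The Euclidean area formula for the Lipschitz map \(\Psi\circ\gamma\), combined with that for \(\gamma\), gives the change-of-variables identity on each chart piece. The Jacobian is the Gram determinant of the chain rule, and the factors \(\sqrt{\det(D\gamma^{\sf T}D\gamma)}\) cancel. A measurable partition of \(E\) subordinate to the charts, together with injectivity, globalizes the identity. Independence of the orthonormal basis holds because \(G_\Psi\) changes by conjugation with an orthogonal matrix.

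For the bounds I write \(D_\tau\Psi=\iota+D_\tau R\), where \(\iota\) is the inclusion of \(T_\xi S\). Because \(\Lip(R)\le\alpha\), the difference quotients of \(R\) along \(S\) give \(\|D_\tau R(\xi)\|\le\alpha\). Hence every singular value of \(D_\tau\Psi\) lies in \([1-\alpha,1+\alpha]\), and the product bounds follow. For \(\alpha\le1/2\), I use \((1+\alpha)^{n-1}-1\le C_n\alpha\) and \(1-(1-\alpha)^{n-1}\le(n-1)\alpha\) to get \(|J_S\Psi-1|\le C_n\alpha\).

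For the normal, the tangent space of \(\Psi(S)\) at \(\Psi(\xi)\) is the image \(D_\tau\Psi(T_\xi S)\), which holds a.e. by the chart argument. I take the generalized cross product of the vectors \(D_\tau\Psi\,\DBtau_i=\DBtau_i+D_\tau R\,\DBtau_i\), with perturbations of size at most \(\alpha\). This cross product differs from \(\DBn_S\) by at most \(C_n\alpha\), and its norm equals \(J_S\Psi\ge 2^{1-n}\). Normalizing then gives \(|\DBn_\Psi-\DBn_S|\le C_n\alpha\), with the orientation fixed by this same construction.

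The main obstacle is the a.e. existence and well-definedness of the tangential differential, together with the identification of \(T_{\Psi(\xi)}\Psi(S)\), when \(\Psi\) is merely Lipschitz. I would handle both entirely through the chart composition and Rademacher's theorem, as sketched above.
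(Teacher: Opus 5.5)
Your proposal is correct and follows the paper's proof essentially step by step. The shared steps are the bi-Lipschitz estimate, countably many $C^1$ charts with Rademacher's theorem and the Euclidean area formula for $\Psi\circ\gamma$ (injectivity giving multiplicity one), and singular values of $D_\tau\Psi$ in $[1-\alpha,1+\alpha]$ from $\|D_\tau R\|\le\alpha$. The only deviation is the normal estimate: you expand the generalized cross product of the vectors $\DBtau_i+D_\tau R\,\DBtau_i$ multilinearly, whereas the paper writes the image tangent plane as the graph of $L=a(I+B)^{-1}$ over $T_\xi S$. Your version is equally valid and builds the orientation induced by $\Psi|_S$ directly into the construction, so the paper's homotopy argument for $I+B$ is not needed.
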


\begin{proof}
Since $\Lip(R)<1$, for $\xi,\eta\in S$,
\[
        (1-\alpha)|\xi-\eta|
        \le |\Psi(\xi)-\Psi(\eta)|
        \le (1+\alpha)|\xi-\eta| .
\]
Thus $\Psi|_S$ is bi-Lipschitz onto its image.

We first prove the change-of-variables formula. Cover $S$, up to an
$\mathcal H^{n-1}$-null set, by countably many $C^1$ coordinate charts
$\kappa_j:U_j\subset\R^{n-1}\to S$, restricted if necessary so that each
$\kappa_j$ is Lipschitz on $U_j$. Set
\[
        \widetilde E_j:=E\cap\kappa_j(U_j),\qquad
        E_1:=\widetilde E_1,\qquad
        E_j:=\widetilde E_j\setminus\bigcup_{i<j}\widetilde E_i
        \quad (j\ge2).
\]
Then the $E_j$ are pairwise disjoint, measurable, satisfy
$E_j\subset E\cap\kappa_j(U_j)$, and cover $E$ up to an
$\mathcal H^{n-1}$-null set.

Put $A_j:=\kappa_j^{-1}(E_j)$ and $F_j:=\Psi\circ\kappa_j$. Extending $f$ by
zero outside $\Psi(E)$, the Euclidean area formula for Lipschitz maps, see
e.g. Leoni~\cite{leoni2017first}, gives
\[
        \int_{A_j} f(F_j(u))\,JF_j(u)\,du
        =
        \int_{\R^n} f(\DBy)\,N(F_j,A_j,\DBy)\,
        d\mathcal H^{n-1}(\DBy),
\]
where $N(F_j,A_j,\DBy)$ is the number of preimages of $\DBy$ in $A_j$.
Since both $\kappa_j$ and $\Psi|_S$ are injective, $F_j$ is injective on $A_j$.
Hence the multiplicity is one on $F_j(A_j)=\Psi(E_j)$ and zero outside this
image. Thus
\[
        \int_{\Psi(E_j)} f\,d\mathcal H^{n-1}
        =
        \int_{\kappa_j^{-1}(E_j)}
        f(\Psi(\kappa_j(u)))\,JF_j(u)\,du,
\]
with
\[
        JF_j(u):=\sqrt{\det\bigl(DF_j(u)^TDF_j(u)\bigr)}.
\]
Since
\[
        d\mathcal H^{n-1}|_S
        = J\kappa_j(u)\,du,
        \qquad
        J\kappa_j(u):=\sqrt{\det\bigl(D\kappa_j(u)^TD\kappa_j(u)\bigr)},
\]
and since $JF_j/J\kappa_j$ is precisely $J_S\Psi$ in this chart, summing over
$j$ proves the asserted formula. The existence of the tangential differential
and of the displayed Jacobian follows from Rademacher's theorem in charts.

It remains to prove the estimates. In what follows, $C_n$ denotes a positive
constant depending only on $n$, whose value may change from line to line. At
almost every point of tangential differentiability,
\[
        |D_\tau\Psi(\xi)\DBtau-\DBtau|
        = |D_\tau R(\xi)\DBtau|
        \le \alpha|\DBtau|
        \qquad(\DBtau\in T_\xi S).
\]
Hence all singular values of $D_\tau\Psi(\xi)$ lie between $1-\alpha$ and
$1+\alpha$. Taking their product gives
\[
        (1-\alpha)^{n-1}\le J_S\Psi\le(1+\alpha)^{n-1}.
\]
For $\alpha\le1/2$, say, the estimate $|J_S\Psi-1|\le C_n\alpha$ follows at
once; for instance, the upper bound follows from the binomial expansion of
$(1+\alpha)^{n-1}$, and the lower bound is analogous.

It remains to control the normal. Fix a point of tangential differentiability
and an oriented orthonormal basis
$(\DBtau_1,\ldots,\DBtau_{n-1})$ of $T_\xi S$. Write
\[
        D_\tau\Psi\DBtau_i=\DBtau_i+\DBe_i,
        \qquad |\DBe_i|\le\alpha .
\]
In the splitting $T_\xi S\oplus\operatorname{span}\{\DBn_S\}$, the image tangent
plane is spanned by vectors
\[
        e_i' + B e_i' + a_i\DBn_S,
        \qquad i=1,\ldots,n-1,
\]
where $e_i'$ is the standard basis of $\R^{n-1}$, and where $B$ and $a$ are the
tangential and normal parts of $D_\tau R$. The norms below are operator norms
induced by the Euclidean norm, and
\[
        \|B\|\le\alpha,\qquad \|a\|\le\alpha .
\]
For $\alpha\le1/2$, the map $I+B$ is invertible and
$\|(I+B)^{-1}\|\le2$. Hence the image tangent plane is the graph over
$T_\xi S$ of
\[
        L:=a(I+B)^{-1}:T_\xi S\to\operatorname{span}\{\DBn_S\},
        \qquad \|L\|\le 2\alpha .
\]
Writing $Lu=\ell\cdot u\,\DBn_S$, the oriented unit normal to this graph is
\[
        \frac{-\ell+\DBn_S}{\sqrt{1+|\ell|^2}},
\]
where the orientation agrees with the one induced by $\Psi|_S$, since $I+B$ is
homotopic to the identity through invertible maps. Therefore
\[
        \left|
        \frac{-\ell+\DBn_S}{\sqrt{1+|\ell|^2}}-\DBn_S
        \right|
        \le C_n|\ell|
        \le C_n\alpha .
\]
This proves the asserted estimate for the transported normal.
\end{proof}


\section*{Acknowledgment}
The first author (DB) met the late Herrmann Sohr while he was still a student and took a seminar on differential equations with him in the summer semester of 1985. He is very grateful to him for his invaluable support during difficult times in a later period.

It is our pleasure to acknowledge fruitful discussions with Kohei Soga (Keio University, Tokyo).

\end{document}